\numberwithin{equation}{section}
\begin{document}
\newtheorem{theorem}{Theorem}
\newtheorem{lemma}{Lemma}
\newtheorem{corollary}{Corollary}
\numberwithin{equation}{section}
\newcommand{\dif}{\mathrm{d}}
\newcommand{\intz}{\mathbb{Z}}
\newcommand{\ratq}{\mathbb{Q}}
\newcommand{\natn}{\mathbb{N}}
\newcommand{\comc}{\mathbb{C}}
\newcommand{\rear}{\mathbb{R}}
\newcommand{\prip}{\mathbb{P}}
\newcommand{\uph}{\mathbb{H}}
\newcommand{\fief}{\mathbb{F}}
\newcommand{\majorarc}{\mathfrak{M}}
\newcommand{\minorarc}{\mathfrak{m}}
\newcommand{\sings}{\mathfrak{S}}

\title{On the Low-lying Zeros of Hasse-Weil $L$-functions for Elliptic Curves}
\date{\today}
\author{Stephan Baier \and Liangyi Zhao}
\maketitle

\begin{abstract}
In this paper, we obtain an unconditional density theorem concerning the low-lying zeros of Hasse-Weil $L$-functions for a family of elliptic curves.  From this together with the Riemann hypothesis for these $L$-functions, we infer the majorant of $27/14$ (which is strictly less than 2) for the average rank of the elliptic curves in the family under consideration.  This upper bound for the average rank enables us to deduce that, under the same assumption, a positive proportion of elliptic curves have algebraic ranks equaling their analytic ranks and finite Tate-Shafarevic group.  Statements of this flavor were known previously \cite{Young} under the additional assumptions of GRH for Dirichlet $L$-functions and symmetric square $L$-functions which are removed in the present paper.
\end{abstract}

\noindent {\bf Mathematics Subject Classification (2000)}: 11M06, 11M26, 11M41, 11F30, 11G05, 11G40, 11L20, 11L40. \newline

\noindent {\bf Keywords}: low-lying zeros, elliptic curve $L$-functions, ranks of elliptic curves

\section{Introduction and Statements of Results}
The philosophy of random matrix theory is that statistics associated to zeros of a family of $L$-functions can be modeled by the statistics of eigenvalues of large random matrices in a suitable linear group.
In the present paper, we consider the statistics of zeros of Hasse-Weil $L$-functions associated to elliptic curves over the rationals near the central point $s = 1$. Moreover, assuming the truth of the generalized Riemann hypothesis (GRH), we draw conclusions about the zeros at the central point itself, which by the conjecture of Birch-Swinnerton-Dyer contain important arithmetical information about the relevant elliptic curves. In particular, we prove results on the average analytic rank for the family of all elliptic curves.  Results of this kind have previously been established by
A. Brumer \cite{Brumer}, D. R. Heath-Brown \cite{HB1}, P. Michel \cite{Michel}, J. H. Silverman \cite{JS3}, M. P. Young \cite{Young} and others.  For a more detailed account of the subject, we refer the readers to the well and clearly written survey article of M. P. Young \cite{Young2}. \newline

Most relevant for us in the present paper are the results in \cite{Brumer, HB1, Young}.  A. Brumer \cite{Brumer} and D. R. Heath-Brown \cite{HB1} obtained majorants of the average analytic rank for the family of all elliptic curves under the assumption of GRH for elliptic curve $L$-functions.  M. P. Young \cite{Young} improved these results under GRH for Dirichlet $L$-functions and elliptic curve $L$-functions.  \newline

We consider elliptic curves $E=E_{a,b}$ of the form
\begin{equation} \label{curveform}
 y^2=x^3+ax+b,
\end{equation}
where $a$ and $b$ are integers.  This elliptic curve has discriminant
\[ \Delta = -16 (4a^3+27b^2). \]
We define $\lambda_E(p)$ by the formula
\[ \# E(\fief_p) = p+1-\lambda_E(p), \]
where $\# E(\fief_p)$ is the number of points on $E(\fief_p)$.  If $p \nmid \Delta$, then $\lambda_E(p)$ is the trace of the Frobenius morphism of $E$ over $\fief_p$.  It was a result due to H. Hasse that 
\begin{equation} \label{hasseest}
 |\lambda_E(p)| < 2 \sqrt{p}
\end{equation}
for all primes $p$. Moreover, for any prime $p>3$, $\lambda_E(p)$ is given by the following formula.
\begin{equation}
\lambda_E(p) = - \sum_{x \bmod{p}} \left( \frac{x^3+ax+b}{p} \right),
\end{equation}
where here and after $\left( \frac{\cdot}{p} \right)$ is the Legendre symbol.  To define the Hasse-Weil $L$-function associated with $E$, we first need to transform $E$ to an elliptic curve $E'$ which is in global minimal Weierstrass form
\[ E': y^2+a_1xy+a_3y=x^3+a_2x^2+a_4x+a_6. \]
  This is always possible.  For the details of this transformation, see \cite{JS1}.  Let $\Delta'$ be the discriminant of $E'$.  The Hasse-Weil $L$-function associated with $E_{a,b}$ is given by the Euler product
\begin{equation} \label{lfundef}
 L(s,E) = \prod_{p \nmid \Delta'} \left( 1-\lambda_{E'}(p) p^{-s} + p^{1-2s} \right)^{-1} \prod_{p|\Delta'} \left( 1-\lambda_{E'}(p) p^{-s} \right)^{-1},
\end{equation}
where $\lambda_{E'}(p)$ is defined by
\[ \# E'(\fief_p) = p+1-\lambda_{E'}(p). \]
We note the important fact that $\lambda_{E'}(p) = \lambda_{E}(p)$ for $p>3$ and $p$ not dividing the discriminant of the elliptic curve $E$.  The infinite product in \eqref{lfundef} converges absolutely and uniformly for $\Re s > 3/2$ by the virtue of \eqref{hasseest}.  Due to celebrated results of  A. Wiles {\it et al} \cites{taywil, BCDT, wiles}, these $L$-functions defined in \eqref{lfundef} coincide with $L$-functions of weight two primitive cuspidal new forms, hence enabling us to use analytic information of the latter to extract information of the former.  In particular, the $L$-functions in \eqref{lfundef} have analytic continuation to the whole complex plane.  Moreover, they have explicit formulas in the form of (4.25) in \cite{ILS} which appears as \eqref{startpt} in the present paper.  The critical strip of $L(s,E)$ is $1/2 < \Re s < 3/2$ and the central point is $s=1$. \newline

Keeping in line with the notations of Iwaniec-Luo-Sarnak \cite{ILS}, we set
\[ D(E; \phi) = \sum_{\rho_E} \phi \left( \frac{\gamma^*_E}{2 \pi} \log X \right), \]
where the sum runs over the non-trivial zeros of $L(s,E)$,
\[ \gamma^*_E = -i (\rho_E - 1), \]
$\phi$ is entire and when restricted to the real line is an even Schwartz class test function whose Fourier transform is compactly supported when restricted to the real line and $X$ is a parameter at our disposal.  The Fourier transform of $\phi$ is defined as
\[ \hat{\phi} (y) = \int_{-\infty}^{\infty} \phi(x) e(-xy) \dif x, \; \mbox{with} \; e(x) = \exp ( 2 \pi i x). \]
$D(E; \phi)$ should be considered as representing the density of zeros of $L(s,E)$ near the central point $s=1$. \newline

We note that if $\phi$ is a Schwartz class test function on the real line and $\hat{\phi}$ has compact support, then $\phi$ extends to an entire function on $\comc$. \newline

If GRH holds for $L(s,E)$, then $\gamma_E^* = \gamma_E$, the imaginary part of $\rho_E$. \newline

 We are interested in studying the weighted average
\begin{equation} \label{Ddef}
\mathcal{D} ( \mathcal{F}; \phi, w ) = \sum_{E \in \mathcal{F}} D(E; \phi) w(E),
\end{equation}
where $w(E)$ is a smooth, compactly supported function on $\rear$ and $\mathcal{F}$ is a family of elliptic curves to be specified presently. \newline

We shall establish the following result unconditionally.

\begin{theorem} \label{main} Let $\mathcal{F}$ be the family of elliptic curves given by the Weierstrass equations $E_{a,b}\ :\ y^2=x^3+ax+b$ with $a$ and $b$ positive integers. Let
$w\in C_0^{\infty}(\rear^+\times\rear^+)$ $($for us $\rear^+=(0,\infty))$ and set $w_X(E_{a,b})=w\left(\frac{a}{A},\frac{b}{B}\right)$, where $A=X^{1/3}$, $B=X^{1/2}$
$(X$ a positive real number$)$. Then
\begin{equation}
\mathcal{D}\left(\mathcal{F};\phi,w_X\right)\sim \left[\hat{\phi}(0)+\frac{1}{2}\phi(0)\right]W_X(\mathcal{F})\ \ as\ X\rightarrow\infty
\end{equation}
for $\phi$ with support in $(-7/10,7/10)$, where $\mathcal{D} \left(\mathcal{F};\phi,w\right)$ is as defined in \eqref{Ddef} and
\begin{equation} \label{Wdef}
W_X (\mathcal{F}) = \sum_{E \in \mathcal{F}} w_X(E).
\end{equation}
\end{theorem}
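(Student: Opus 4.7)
\emph{Approach.} The plan is to apply the Iwaniec--Luo--Sarnak explicit formula \textup{(4.25)} of \cite{ILS} to each $L(s,E)$, reducing Theorem \ref{main} to character-sum estimates for the first and second moments of $\lambda_E(p)$ averaged over the family. The explicit formula expresses $D(E;\phi)$ as a conductor main term together with prime and prime-square contributions:
\begin{equation*}
D(E;\phi) = \hat\phi(0)\frac{\log c(E)}{\log X} - S_1(E;X) - S_2(E;X) + O\!\left(\frac{\log\log X}{\log X}\right),
\end{equation*}
where $c(E)$ is the conductor,
\begin{equation*}
S_1(E;X) = \frac{2}{\log X}\sum_p\frac{\lambda_E(p)\log p}{\sqrt p}\,\hat\phi\!\left(\frac{\log p}{\log X}\right),
\end{equation*}
and $S_2$ is the analogous sum involving $(\lambda_E(p)^2-2p)/p$ and $\hat\phi(2\log p/\log X)$ (in the normalization $\lambda_E(p)/\sqrt p\in[-2,2]$). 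Since $|\Delta|=16|4a^3+27b^2|\asymp X$ throughout the support of $w_X$, one has $\log c(E)/\log X=1+o(1)$ outside a negligible subset, so the conductor term contributes $\hat\phi(0)W_X(\mathcal{F})$ in the limit. The $\phi(0)/2$ piece emerges from $-\langle S_2\rangle$ once one establishes the averaged second-moment identity
\begin{equation*}
\sum_{E\in\mathcal{F}}\lambda_E(p)^2\, w_X(E) = p\,W_X(\mathcal{F})\bigl(1+o(1)\bigr)
\end{equation*}
uniformly for $p$ in the relevant range; combined with Mertens' theorem applied to the prime-square cutoff, this yields $-\langle S_2\rangle\to\tfrac12\phi(0)\,W_X(\mathcal{F})$. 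It then remains to show $\sum_{E}w_X(E)S_1(E;X)=o(W_X(\mathcal{F}))$ and the analogous off-diagonal estimate for $S_2$.

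\emph{Character sums via Poisson and Weil.} For the $S_1$ estimate, substitute $\lambda_{E_{a,b}}(p)= -\sum_{x\bmod p}\bigl(\tfrac{x^3+ax+b}{p}\bigr)$, interchange the $(a,b)$-sum with the $x$- and $p$-sums, and apply Poisson summation to the smooth $a$- and $b$-sums reduced modulo $p$. The zero frequency in $b$ vanishes identically because $\sum_{b\bmod p}\bigl(\tfrac{b}{p}\bigr)=0$, while the non-zero frequencies bring a Gauss sum $g_p$ of modulus $\sqrt p$ multiplied by the additive character $e(-(x^3+ax)k/p)$. A second Poisson summation on $a$ pins $x$ modulo $p$ to $l\bar k$ for the dual variable $l$, and the residual $x$-sum collapses to the single evaluation $e(-l^3\bar k^2/p)$. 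The net contribution of each prime $p$ is
\begin{equation*}
-\frac{AB\,g_p}{p}\sum_{k\neq 0,\,l}\widehat w(Al/p,\,Bk/p)\left(\frac{k}{p}\right) e\!\left(-\frac{l^3\bar k^2}{p}\right),
\end{equation*}
where $\widehat w$ is the two-dimensional Fourier transform of $w$; Weil's bound on the complete one-dimensional sums (Sali\'e-type in $k$, cubic in $l$) yields $\sqrt p$-cancellation in each variable. The $S_2$ contribution is treated analogously: expanding $\lambda_E(p)^2$ as a double sum over $x,y\pmod p$, the diagonal $x\equiv y\pmod p$ supplies the averaged identity $\sum_E\lambda_E(p)^2w_X(E)\sim p W_X(\mathcal{F})$ (providing the $\phi(0)/2$ main term), and the off-diagonal part is controlled by the same Poisson--Weil machinery after a change of variables in the two-variable Legendre-symbol sum.

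\emph{Main obstacle.} The technical heart of the proof is balancing three competing quantities: the length $\asymp p^2/(AB)=p^2/X^{5/6}$ of the Poisson-dual sum, the Gauss-sum prefactor $AB/\sqrt p$, and the Weil saving of $\sqrt p$ in each of the two character-sum directions. For primes $p\le\max(A,B)=X^{1/2}$ the dual sum has essentially one term and the bound is easy. The delicate range is $X^{1/2}\le p\le X^{7/10}$, where one must simultaneously exploit cancellation in both $k$ and $l$ and track the interaction between the character $(k/p)$ and the cubic phase $e(-l^3\bar k^2/p)$ in the joint mixed-character sum. The exponent $7/10$ is precisely the point at which the cumulative bound, summed against the factor $\log p/(\sqrt p\log X)$ over all primes $p\le X^{7/10}$, still beats $W_X(\mathcal{F})\asymp X^{5/6}$; extending beyond this range would require either GRH for Dirichlet and symmetric-square $L$-functions (as in \cite{Young}) or sharper bounds for the two-variable character sum that are presently inaccessible.
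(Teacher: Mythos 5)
Your outline correctly reproduces the \emph{opening} of the argument: starting from the ILS explicit formula, applying the conductor lemma, handling the prime-square term on average, and transforming the prime sum via Poisson summation in $a$ and $b$ modulo $p$ to arrive at a weighted sum over $h,k,p$ of $\bigl(\tfrac{k}{p}\bigr)e(-h^3\bar k^2/p)$ against a Gauss sum. (Your alternative bookkeeping in which the $\tfrac12\phi(0)$ emerges from $-\langle S_2\rangle$ rather than appearing directly in the explicit formula is consistent with the paper's; the two normalizations differ by whether one writes the coefficient at $p^2$ as $\lambda(p)^2-p$ with an explicit $\tfrac12\phi(0)$, or as $\lambda(p)^2-2p$ with the $\tfrac12\phi(0)$ recovered on average.)

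However, there is a genuine and serious gap in what follows. You assert that ``Weil's bound on the complete one-dimensional sums \ldots yields $\sqrt p$-cancellation in each variable'' and that the exponent $7/10$ is simply the balance point of the Poisson-dual length, the Gauss-sum prefactor, and these Weil savings. This is not how the paper reaches $7/10$, and indeed it cannot: the $h$-sum is \emph{incomplete} (length $\asymp H \le P/A$), so a Weil-type square-root bound is unavailable; the paper instead applies Weyl's method to get the bound quoted in (15) of \cite{Young}, and this approach on its own is only effective when $K$ is small, giving roughly the exponent $2/3$ of Heath-Brown. Breaking past $2/3$ unconditionally is the entire technical content of Sections~6--11 of the paper: the exponential $e(-h^3\bar k^2/p)$ is opened using Dirichlet characters modulo $k^2/d$ and Gauss sums (yielding \eqref{transS1}); the resulting expression $Q(d,k,\chi)$ is written via Mellin transform and a zigzag contour as an integral against $L(s,\overline{\chi}^3)$ and $L'/L(s,\chi\psi_4(k/\cdot))$ together with a sum over low-lying zeros; the zero sum is controlled by a tailor-made zero-density estimate (Theorems~\ref{Dirichlet}, \ref{zerodens}, \ref{zerodensexp}) proved via Gallagher's lemmas and Montgomery's mean-value theorem; Heath-Brown's quadratic large sieve (Lemma~\ref{Heathls}) and the structure of cubic characters (Lemma~\ref{ord3}) handle the cases where $\overline{\chi}^3$ is trivial; and Burgess's subconvexity bound (Lemma~\ref{Lest}) disposes of the residual term $G_3$. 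None of these ingredients appear in your proposal, and your ``Main obstacle'' paragraph in effect concedes the point by saying that improving past this range ``would require \ldots GRH'': the whole achievement of the paper is precisely that it avoids GRH for Dirichlet and symmetric-square $L$-functions by substituting unconditional zero-density and large-sieve inputs. Without that machinery, the route you sketch stalls well short of $7/10$.
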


Theorem~\ref{main} was established by A. Brumer \cite{Brumer} with $\pm 5/9$ in place of $\pm 7/10$ in our theorem and by D. R. Heath-Brown \cite{HB1} with $\pm 2/3$ in place of $\pm 7/10$.  Moreover, both of the last-mentioned results require GRH for elliptic curve $L$-functions.  M. P. Young \cite{Young} obtained Theorem~\ref{main} with $\pm 7/9$ in place of $\pm 7/10$ under the assumption of GRH for Dirichlet and symmetric square $L$-functions. \newline

We further note here that $7/10>2/3$.  It was noted in \cite{HB1} that having an admissible range of support larger than $(-2/3, 2/3)$ is of ``paramount importance'' as such a larger range would permit an upper bound strictly smaller than 2 for the average analytic rank of elliptic curves.  Indeed, from Theorem~\ref{main}, we have the following

\begin{corollary} \label{cor1}
Assuming GRH for Hasse-Weil $L$-functions, the family of elliptic curves ordered as in Theorem \ref{main} has average rank $r\le 1/2+10/7=2-1/14=27/14$.
\end{corollary}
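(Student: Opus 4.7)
The plan is to combine Theorem~\ref{main} with a well-chosen non-negative test function and GRH. Under GRH for $L(s,E)$, every non-trivial zero has the form $\rho_E = 1 + i\gamma_E$ with $\gamma_E \in \rear$, so $\gamma_E^* = \gamma_E$, and a zero at the central point $s=1$ contributes exactly $\phi(0)$ to $D(E;\phi)$. Consequently, if $\phi \geq 0$ on the real line with $\phi(0) > 0$, one has $r_E\,\phi(0) \leq D(E;\phi)$ for every $E$, where $r_E$ denotes the analytic rank. Summing against the weight $w_X$ and invoking Theorem~\ref{main},
\begin{equation*}
\phi(0)\sum_{E \in \mathcal F} r_E\, w_X(E) \;\leq\; \mathcal D(\mathcal F;\phi, w_X) \;\sim\; \left(\hat\phi(0) + \tfrac{1}{2} \phi(0)\right) W_X(\mathcal F),
\end{equation*}
so the weighted average analytic rank is bounded above by $\hat\phi(0)/\phi(0) + 1/2$.

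To extract the sharpest such bound compatible with $\mathrm{supp}\,\hat\phi \subset (-7/10, 7/10)$, I would take the squared-sinc (Fej\'er) function
\begin{equation*}
\phi(x) = \left(\frac{\sin(\pi\nu x)}{\pi\nu x}\right)^2, \qquad \nu = \tfrac{7}{10} - \varepsilon,
\end{equation*}
for arbitrarily small $\varepsilon > 0$. This $\phi$ is non-negative on $\rear$, Schwartz on $\rear$, and entire by Paley-Wiener; the convolution formula yields $\phi(0) = 1$, while $\hat\phi(\xi) = \nu^{-1}\max(0, 1 - |\xi|/\nu)$ is supported in $[-\nu, \nu] \subset (-7/10, 7/10)$ with $\hat\phi(0) = 1/\nu$. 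Hence the weighted average rank is at most $1/\nu + 1/2$; letting $X \to \infty$ and then $\varepsilon \to 0^+$ produces the constant $10/7 + 1/2 = 27/14$.

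To convert this smooth-weighted assertion into a statement about the natural average over $a \leq X^{1/3}$, $b \leq X^{1/2}$, I would sandwich the characteristic function $\mathbbm{1}_{(0,1)\times(0,1)}$ between non-negative cutoffs $w^{\pm} \in C_0^{\infty}(\rear^+ \times \rear^+)$ with integrals as close to $1$ as desired, apply the weighted bound to each, and use that both $W_X^{\pm}(\mathcal F)$ and the number of curves in the box are asymptotic to a constant times $AB = X^{5/6}$; squeezing these pinches the unweighted average between two quantities tending to $27/14$. There is no deep obstacle beyond the selection of $\phi$: the squared sinc is the Beurling-Selberg type extremizer minimizing $\hat\phi(0)/\phi(0)$ over $\phi \geq 0$ with $\mathrm{supp}\,\hat\phi$ contained in a fixed interval, so $27/14$ is the best rank bound extractable from Theorem~\ref{main} via this route, and the improvement over the classical $2$ is forced by the surplus $7/10 - 2/3 = 1/30$ in the admissible support of $\hat\phi$.
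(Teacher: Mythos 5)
Your argument is correct and reproduces, in fully explicit form, precisely the derivation of the bound $\tfrac12+\tfrac1\nu$ that the paper attributes to Iwaniec--Luo--Sarnak: the paper's proof is essentially the two-sentence remark that ILS gives the form of the bound and that GRH plus positivity of $\phi$ lets one drop off-critical zeros, and your Fej\'er extremizer $\phi=(\sin(\pi\nu x)/(\pi\nu x))^2$ with the Cauchy--Schwarz optimality remark is exactly what lies behind that citation. Your additional sandwiching step is fine but not required, since the corollary's phrase ``ordered as in Theorem~\ref{main}'' already refers to the smooth-weighted average.
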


\begin{proof}
It was shown in \cite{ILS} that the upper bound for the average rank takes the form $1/2+1/\nu$ where $[-\nu, \nu]$ is contained in the support of $\hat{\phi}$ in Theorem~\ref{main}.  GRH for Hasse-Weil $L$-functions enables us to discard all zeros that are not central by positivity of $\phi$ on the real line.
\end{proof}

The respective versions of Theorem~\ref{main} due to Brumer \cite{Brumer} and Heath-Brown \cite{HB1}, with their admissible ranges of support for $\hat{\phi}$, lead to the bound for average rank $r$ of all elliptic curves of $r \leq 23/10$ and $r \leq 2$.  Young's version of Theorem~\ref{main} \cite{Young} leads the upper bound of $r \leq 25/14$.  Many believe and it has been conjectured that this average rank should be $1/2$. \newline

It is particularly note-worthy that the majorant in Corollary~\ref{cor1} is strictly less than 2.  Under the truth of such a majorant for the average rank, a positive proportion of elliptic curves have rank either 0 or 1.  Using the famous theorem, chiefly due to Kolyvagin \cite{Ko} and Gross-Zagier \cite{GZ}, that if the analytic rank of an elliptic curve does not exceed 1, then its algebraic and analytic ranks are the same and its Tate-Shafarevich group is finite.  From all this, the following can be deduced.

\begin{corollary} \label{cor2}
Under the assumption of GRH for Hasse-Weil $L$-functions, a positive proportion of elliptic curves ordered as in Theorem \ref{main} have algebraic ranks equal to analytic ranks and finite Tate-Shafarevic groups.
\end{corollary}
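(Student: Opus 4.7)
The plan is to deduce Corollary~\ref{cor2} from Corollary~\ref{cor1} through two standard ingredients. First, I would exploit the elementary fact that an upper bound strictly less than $2$ on the average analytic rank forces a positive proportion of curves in $\mathcal{F}$ to have analytic rank at most $1$. Second, I would appeal to the theorem of Gross--Zagier and Kolyvagin to convert this analytic information into the desired algebraic conclusion about Mordell--Weil ranks and Tate--Shafarevich groups.

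For the first step, write $r_E$ for the analytic rank of $E$ (the order of vanishing of $L(s,E)$ at $s=1$) and, for any subfamily $\mathcal{S}\subset\mathcal{F}$, define the weighted lower density
\[
\underline{\mathrm{dens}}(\mathcal{S}) = \liminf_{X\to\infty} \frac{1}{W_X(\mathcal{F})}\sum_{E\in\mathcal{S}} w_X(E).
\]
Since $r_E$ is a nonnegative integer, we have $r_E \geq 2$ on the set $\{E : r_E \geq 2\}$, so
\[
\sum_{E\in\mathcal{F}} r_E\, w_X(E) \;\geq\; 2 \sum_{\substack{E\in\mathcal{F} \\ r_E \geq 2}} w_X(E).
\]
Dividing by $W_X(\mathcal{F})$, taking the appropriate $\limsup$ and $\liminf$ in $X$, and invoking Corollary~\ref{cor1} gives
\[
\underline{\mathrm{dens}}\left(\{E\in\mathcal{F} : r_E \leq 1\}\right) \;\geq\; 1 - \frac{1}{2}\cdot\frac{27}{14} \;=\; \frac{1}{28} > 0.
\]

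For the second step, every curve $E\in\mathcal{F}$ is modular by the theorem of Wiles and others \cites{wiles,taywil,BCDT}. The work of Gross--Zagier \cite{GZ} combined with Kolyvagin's theory of Heegner points \cite{Ko} then guarantees that whenever the analytic rank of such a modular curve is at most $1$, its Mordell--Weil rank is equal to its analytic rank and its Tate--Shafarevich group over $\ratq$ is finite. Applied curve-by-curve to the positive-density subfamily isolated in the first step, this yields Corollary~\ref{cor2}.

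The deduction is essentially formal given Corollary~\ref{cor1}, so there is no real obstacle to overcome; the only mild bookkeeping point is matching the informal notion of ``positive proportion'' used in the statement with the weighted density defined above, which is routine since $w_X$ is a smooth compactly supported bump whose total mass grows like a positive multiple of $AB$.
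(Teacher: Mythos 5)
Your proof is correct and follows essentially the same path the paper sketches in the paragraph preceding Corollary~\ref{cor2}: the bound $r \le 27/14 < 2$ from Corollary~\ref{cor1} forces a positive (weighted) density of curves to have analytic rank at most $1$, and then modularity together with Gross--Zagier and Kolyvagin upgrades this to the statement about Mordell--Weil ranks and finiteness of Tate--Shafarevich groups. The explicit lower bound $1/28$ for the density and the careful $\liminf/\limsup$ bookkeeping are welcome additions that the paper leaves implicit.
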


In \cite{Young}, this result was proved under the additional assumptions of GRH for Dirichlet and symmetric square $L$-functions. The main outcome of this paper is that we have removed these assumptions. Making the assertion in Corollary \ref{cor2} completely unconditionally seems to be out of reach of the currently available methods. It even seems to be extremely difficult to prove an unconditional bound for the average analytic rank of all elliptic curves (see section 12).

\section{General Approach}
Our starting point is (4.25) of \cite{ILS} which is the explicit formula for $L(s,E)$.  The formula is
\begin{equation} \label{startpt}
D(E; \phi) = \hat{\phi}(0) \frac{\log N_E}{\log X} + \frac{1}{2} \phi(0) - P_1(E; \phi) - P_2(E; \phi) + O \left( \frac{\log \log |\Delta|}{\log X} \right),
\end{equation}
where $N$ is the conductor of the elliptic curve $E$,
\[ P_1(E;\phi) = \sum_{p >3} \lambda_{E'}(p) \hat{\phi} \left( \frac{\log p}{\log X} \right) \frac{2 \log p}{p \log X} \; \mbox{and} \; P_2(E;\phi) = \sum_{p>3} \lambda_{E'}(p^2) \hat{\phi} \left( \frac{2 \log p}{\log X} \right) \frac{2 \log p}{p^2 \log X}, \]
and $X$ is some scaling parameter. We note that if $p \nmid \Delta$ and $p>3$, then
\begin{equation} \label{lambdaform}
\lambda_{E'}(p) = \lambda_{E}(p) = - \sum_{x \bmod{p}} \left( \frac{x^3+ax+b}{p} \right)
=: \lambda_{a,b}(p)
\end{equation}
and
$$
\lambda_{E'}(p^2) = \lambda_{a,b}(p)^2-p,
$$
provided that $E$ is of the form in \eqref{curveform}. We set
\[ \tilde{P_1}(E;\phi) = \sum_{p >3} \lambda_{a,b}(p) \hat{\phi} \left( \frac{\log p}{\log X} \right) \frac{2 \log p}{p \log X} \; \mbox{and} \; \tilde{P_2}(E;\phi) = \sum_{p>3} \lambda_{a,b}(p^2) \hat{\phi} \left( \frac{2 \log p}{\log X} \right) \frac{2 \log p}{p^2 \log X}, \]
where $\lambda_{a,b}(p)$ is defined as in \eqref{lambdaform} and $\lambda_{a,b}(p^2)$ is defined by
\begin{equation} \label{atsquares}
\lambda_{a,b}(p^2) := \lambda_{a,b}(p)^2-p.
\end{equation}
We point out that if $p>3$ and $p|\Delta$, then $\lambda_{a,b}(p^2)$ defined above differs from $\lambda_E(p^2)$ since in this case we have $\lambda_E(p^2)=\lambda_E(p)^2$. However, if $p>3$ and $p\nmid \Delta$, then $\lambda_{a,b}(p^2)=\lambda_E(p^2)=\lambda_{E'}(p^2)$. \newline

By Hasse's bound $|\lambda_{a,b}(p)|\le 2\sqrt{p}$ and the elementary facts that
\begin{equation} \label{elefact}
\sum_{p|\Delta} \frac{\log p}{p} \ll \log \log \Delta  \quad \mbox{and} \quad \sum_{p|\Delta} \frac{\log p}{\sqrt{p}} \ll \sqrt{\log \Delta \cdot \log \log \Delta},
\end{equation}
we conclude that
$$
\tilde{P_1}(E;\phi)=P_1(E;\phi)+O\left(\frac{\sqrt{\log \Delta \cdot \log \log \Delta}}{\log X}\right)
$$
and
$$
\tilde{P_2}(E;\phi)=P_2(E;\phi)+O\left(\frac{\log \log \Delta}{\log X}\right).
$$
Hence, the explicit formula \eqref{startpt} can be written the form
\begin{equation} \label{startpt1}
D(E; \phi) = \hat{\phi}(0) \frac{\log N_E}{\log X} + \frac{1}{2} \phi(0) - \tilde{P_1}(E; \phi) - \tilde{P_2}(E; \phi) + O \left((\log X)^{-1/2+\varepsilon}\right)
\end{equation}
since we will have $\log \Delta \ll \log X$.
\newline

Now summing \eqref{startpt1} over the family, $\mathcal{F}$, of elliptic curves, we have
\[ \mathcal{D} ( \mathcal{F}; \phi, w_X) = \hat{\phi}(0) \sum_{E \in \mathcal{F}} \frac{\log N_E}{\log X} w_X(E) + \frac{1}{2} \phi(0) W_X(\mathcal{F}) - \mathcal{P}_1 (\mathcal{F}; \phi, w_X) - \mathcal{P}_2 (\mathcal{F}; \phi,w_X) + O \left(\frac{W_X(\mathcal{F})}{(\log X)^{1/2-\varepsilon}} \right), \]
where for $i \in \{ 1,2 \}$
\[ \mathcal{P}_i (\mathcal{F}; \phi, w_X) = \sum_{E \in \mathcal{F}} \tilde{P_i}(E;\phi) w_X(E), \quad \mbox{and} \quad  W_X(\mathcal{F}) =\sum_{E \in \mathcal{F}} w_X(E). \]

Now to prove Theorem~\ref{main}, we need to show that
\begin{equation} \label{boundsforPs}
 \mathcal{P}_i (\mathcal{F}; \phi, w_X) = o(AB) \quad \mbox{as} \quad X\rightarrow \infty
\end{equation}
holds for both $i=1$ and $i=2$ with $A$, $B$ and $\mathcal{F}$ as in Theorem~\ref{main}.  Moreover, a conductor condition needs to be proved, namely
\begin{equation} \label{conductorcond}
\sum_{E \in \mathcal{F}} \frac{\log N_E}{\log X} w_X(E) \sim W_X( \mathcal{F} ) \; \mbox{as} \; X \to \infty.
\end{equation}
We shall establish \eqref{boundsforPs} with $i=2$ by Lemma~\ref{sym2rh} in section 5.  The conductor condition \eqref{conductorcond} for the $A$, $B$, $\mathcal{F}$ and $w$ most relevant for us was already established in \cite{Young} and we simply quote the result here.

\begin{lemma}
Let $A$, $B$, $\mathcal{F}$ and $w$ be as in Theorem~\ref{main}.  Then we have
\[ \sum_{E \in \mathcal{F}} \frac{\log N_E}{\log X} w_X(E) = W_X( \mathcal{F} ) \left( 1+ O \left( \frac{1}{\log X} \right) \right). \]
\end{lemma}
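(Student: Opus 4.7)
The plan is to show that $\log N_E=\log X+O(1)$ after averaging over $E\in\mathcal F$. Since $w$ has compact support in $\rear^+\times\rear^+$, any $(a,b)$ contributing to $W_X(\mathcal F)$ satisfies $a\asymp A=X^{1/3}$ and $b\asymp B=X^{1/2}$, so $|\Delta|=16(4a^3+27b^2)\asymp X$, and in particular $\log|\Delta|=\log X+O(1)$ uniformly on $\mathrm{supp}(w_X)$. Using also the trivial bound $N_E\le|\Delta|$, it will suffice to establish
\[
\sum_{E\in\mathcal F}\bigl(\log|\Delta|-\log N_E\bigr)\,w_X(E)=O\bigl(W_X(\mathcal F)\bigr),
\]
since dividing through by $\log X$ then yields the claimed $O(1/\log X)$ relative error.

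To bound the left-hand side I would use two basic facts: first, $\mathrm{rad}(\Delta_{\min})\mid N_E$, because every prime of bad reduction occurs in the conductor to exponent at least one; second, $|\Delta|=u^{12}|\Delta_{\min}|$ where $u\in\natn$ is the global-minimality scalar (so $u^4\mid a$ and $u^6\mid b$), whence $\mathrm{rad}(\Delta)\le u\cdot\mathrm{rad}(\Delta_{\min})$. Together these give
\[
\log|\Delta|-\log N_E\;\le\;\log\bigl(|\Delta|/\mathrm{rad}(\Delta)\bigr)+\log u.
\]
The $u$-contribution is immediate from an elementary divisor sieve, yielding $\sum_{(a,b)}\log u(a,b)\,w_X(E_{a,b})\ll AB\sum_{n\ge 2}n^{-11}=O(AB)$. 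For the remaining term I would expand $\log(|\Delta|/\mathrm{rad}(\Delta))=\sum_p\sum_{k\ge 2}[p^k\mid\Delta]\log p$ and interchange summation. For fixed $a$ the congruence $27b^2\equiv-4a^3\pmod{p^k}$ has $O(1)$ residues $b\bmod p^k$, so $\#\{b\asymp B\colon p^k\mid\Delta\}\ll B/p^k+1$; summing over $a\asymp A$ yields $\#\{(a,b)\colon p^k\mid\Delta\}\ll AB/p^k+A$. Hence
\[
\sum_{E\in\mathcal F}\log\bigl(|\Delta|/\mathrm{rad}(\Delta)\bigr)w_X(E)\ll AB\sum_{k\ge 2}\sum_p\frac{\log p}{p^k}+A\sum_{k\ge 2}\sum_{p^k\le cX}\log p,
\]
which by Chebyshev's estimate $\sum_{p\le y}\log p\ll y$ together with $A=X^{1/3}$ and $B=X^{1/2}$ is $\ll AB+AX^{1/2}=O(AB)$.

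The main obstacle is the careful bookkeeping around the minimal Weierstrass model: one must verify that the pairs $(a,b)$ admitting a nontrivial scaling $u>1$ contribute acceptably (handled by the divisor sieve above), and that the bounded conductor exponents at the small primes $p\in\{2,3\}$ supplied by Tate's algorithm can be absorbed into the implicit constants. With these technicalities settled, one obtains $\sum_{E\in\mathcal F}\log N_E\,w_X(E)=(\log X)W_X(\mathcal F)+O(W_X(\mathcal F))$, which divided by $\log X$ is the statement of the lemma.
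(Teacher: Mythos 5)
The paper offers no proof of its own here (it simply cites Lemma~5.1 of Young), so your proposal is a genuine argument rather than a comparison target, and the overall skeleton --- reduce to showing $\sum_{E}(\log|\Delta|-\log N_E)w_X(E)=O(W_X(\mathcal F))$, then split that difference into a squarefull part $\log(|\Delta|/\mathrm{rad}(\Delta))$ and a minimality-scalar part $\log u$ --- is the correct strategy.

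There is, however, a genuine gap in the congruence count. The claim that for fixed $a$ the congruence $27b^2\equiv-4a^3\pmod{p^k}$ has $O(1)$ residues $b\bmod p^k$ is false. If $p^j\,\|\,a$ with $3j\ge k$, the right-hand side vanishes modulo $p^k$ and the condition on $b$ is only $p^{\lceil k/2\rceil}\mid b$, giving $p^{\lfloor k/2\rfloor}$ residue classes; more generally, for $3j<k$ with $3j$ even one gets about $p^{3j/2}$ classes. Consequently the asserted bound $\#\{(a,b)\colon p^k\mid\Delta\}\ll AB/p^k+A$ is too strong. Carrying out the decomposition according to $v_p(a)$ one instead finds a bound of the shape
\[
\#\bigl\{(a,b)\colon a\asymp A,\ b\asymp B,\ p^k\mid \Delta\bigr\}\ \ll\ \frac{AB}{p^{5k/6}}\ +\ kA\ +\ \frac{B}{p^{k/2}}\ +\ p^{k/2},
\]
and one must then verify that the double sum over primes $p$ and $k\ge 2$ still yields $O(AB)$. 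It does --- the exponent $5k/6$ is still $>1$ for $k\ge2$, and $A\sum_{k\ge 2}k\,X^{1/k}\ll AX^{1/2}=AB$, etc.~--- but this verification is not automatic and must be included, since with your stated $O(1)$ bound the computation would be silently incorrect. A secondary, smaller point: the relations $u^4\mid a$, $u^6\mid b$ for the integer $u$ with $|\Delta|=u^{12}|\Delta_{\min}|$ are immediate only at primes $p>3$, where passage to the minimal model is a pure $(u^2,u^3)$-scaling. At $p\in\{2,3\}$ Tate's algorithm permits non-scaling substitutions, so you need to argue separately that the $2$- and $3$-parts of $u$ exceed the pure scaling contribution by at most a bounded factor before the divisor sieve $\sum_{n\ge2}(\log n)\,AB/n^{10}\ll AB$ can be invoked. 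You flag this as a technicality, which is fair, but as stated the proof relies on an inequality that requires justification.
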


\begin{proof} This is Lemma 5.1 in \cite{Young}. \end{proof}

In the remainder of this section, we shall chiefly focus on the transformation and preparation of $\mathcal{P}_1 (\mathcal{F}; \phi, w_X)$, the unconditional estimation of which will be central to Theorem~\ref{main}. \newline

Following the computations in section 5.2 of \cite{Young}, using Lemma~\ref{poisuml}, \eqref{lambdaform} and Lemma 5.6 in \cite{Young}, we have that
\begin{equation} \label{P1trans1}
 \mathcal{P}_1 (\mathcal{F}; \phi, w_X) = - \frac{AB}{\log X} \sum_{p>3} \psi_4(p) \frac{2 \log p}{p^{3/2}} \hat{\phi} \left( \frac{\log p}{\log X} \right) \sum_h \sum_k \left( \frac{k}{p} \right) e \left( - \frac{h^3 \bar{k}^2}{p} \right) \hat{w} \left( \frac{hA}{p}, \frac{kB}{p} \right),
\end{equation}
where $\psi_4(p)$ is the sign of the quadratic Gauss sum
\[ \sum_{\beta \bmod{p}} \left( \frac{\beta}{p} \right) e \left( \frac{\beta}{p} \right) \]
and is 1 if $p \equiv 1 \pmod{4}$ and $i$ if $p \equiv -1 \pmod{4}$.  We know that $\psi_4(p)$ is a Dirichlet character modulo 4. \newline

The contribution of $h=0$ in \eqref{P1trans1} will be negligible by trivial considerations.  Now breaking the summation ranges on the right-hand side of \eqref{P1trans1} into dyadic segments using a smooth partition of unity, it suffices to estimate
\begin{equation} \label{Sdef}
S(H,K,P) = \sum_h \sum_k \sum_p \frac{\log p}{p^{3/2}} \psi_4(p) \left( \frac{k}{p} \right) e \left( - \frac{h^3 \bar{k}^2}{p} \right) \hat{w} \left( \frac{hA}{p}, \frac{kB}{p} \right) g \left( \frac{h}{H} \right) g \left( \frac{k}{K} \right) g \left( \frac{p}{P} \right),
\end{equation}
where $g$ is a smooth compactly supported function arising from the partition of unity and the support of $g$ is in the interval $[1,2]$.  As noted in \cite{Young}, it suffices to show that
\[ S(H,K,P) \ll X^{-\varepsilon}, \]
with the implied constants depending on $w$, $\| g \|_{\infty}$ and $\| g' \|_{\infty}$.  Moreover, it is enough to restrict $H$, $K$ and $P$ to
\[ H\ll (P/A)^{1+\varepsilon}, \; K\ll (P/B)^{1+\varepsilon}, \; \mbox{and} \; P \ll X^{7/10-\varepsilon}. \]
If the above restrictions are not met, then the corresponding contributions are negligible due to the rapid decay of the Fourier transform $\hat{w}$.  Using Weyl's method for exponential sums, we get
\[ \sum_{P \leq p <2P} \left| \sum_{H \leq h <2H} e \left( \frac{h^3 \bar{k}^2}{p} \right) \right| \ll (H^{3/4}P + HP^{3/4}+H^{1/4}P^{5/4})(HKP)^{\varepsilon}. \]
The above estimate applied to \eqref{Sdef} after partial summation will lead to the bound
\[ S(H,K,P) \ll (H^{3/4}KP^{-1/2} + HKP^{-3/4}+H^{1/4}P^{-1/4}) X^{\varepsilon}, \]
which is (15) in \cite{Young}.  Using this estimate and $H\ll (P/A)^{1+\varepsilon}$ and $P\ll X$ it follows that
\[ S(H,K,P)\ll X^{-\varepsilon} \]
if $K\ll \min\left\{P^{1/2}H^{-3/4},P^{1/4}H^{-1/4}\right\}X^{-2\varepsilon}$. Therefore, we may assume from now on that
\begin{equation} \label{small}
K\gg \min\left\{P^{1/2}H^{-3/4},P^{1/4}H^{-1/4}\right\}X^{-2\varepsilon}.
\end{equation}
We now transform $S(H,K,P)$ {\it \'a la} Young \cite{Young}.  We have
\begin{equation} \label{transS1}
S(H,K,P) = \sum_{K \leq k \leq 2K} \sum_{d|k^2} \frac{1}{\varphi(k^2/d)} \sum_{\chi \bmod{k^2/d}} \tau(\chi) \bar{\chi} (d_0^3/d) Q(d,k,\chi),
\end{equation}
where $\tau(\chi) = \tau_1(\chi)$, the usual Gauss sum, as defined in \eqref{gausssumdef},
\[ Q(d,k,\chi) = \sum_{P \leq p < 2P} \sum_{H/d_0 \leq h <2H/d_0} \psi_4(p) \chi(p) \left( \frac{k}{p} \right) \bar{\chi}^3 (h) e \left( - \frac{h^3d_0^3}{pk^2} \right) U(h,d_0,k,p), \]
\[ U(h,d_0,k,p) = g \left( \frac{hd_0}{H} \right) g \left( \frac{k}{K} \right) g \left( \frac{p}{P} \right) \hat{w} \left( \frac{hd_0A}{p}, \frac{kB}{p} \right) \frac{2 \log p}{p^{3/2}\log X} \hat{\phi} \left( \frac{\log p}{\log X} \right), \]
and $d_0$ is the least positive integer such that $d|d_0^3$.  Much of what remains in the paper will be devoted to the estimation of $Q(d,k,\chi)$. \newline

\section{Notations}
As usual, by $\varepsilon$ we denote an arbitrarily small positive number which may change from line to line.  If $\chi$ is a Dirichlet character and $\rho$ is a zero of the Dirichlet $L$-function $L(s, \chi)$, then we denote its real part by $\beta$ and its imaginary part by $\gamma$.  Moreover, if $T>0$ and $\sigma\ge 0$, then we denote the number of zeros $\rho$ of  $L(s, \chi)$ with $\beta\ge \sigma$ and $|\gamma|\le T$ by $N(\chi,T,\sigma)$. Furthermore, we denote the set of zeros $\rho$ of $L(s, \chi)$ with $\beta\ge \sigma$ by ${\mathcal N}(\chi,\sigma)$.  If $d$ is a natural number, then we define $d'$ to be the smallest natural number such that $d|{d'}^2$, $d^*$ to be the square-free kernel of $d$ (that is, $d^* = {d'}^2/d$), and $d_0$ to be the smallest natural number such that $d|d_0^3$. The symbol $p$ is reserved for prime numbers.\newline

\section{Preliminaries}

First, we shall need the following estimate from \cite{Young}.

\begin{lemma} \label{Hest}
Let $F(u,v)$ be a smooth function satisfying
\begin{equation} \label{fcond}
F^{(\alpha_1,\alpha_2)}(u, v)u^{\alpha_1}v^{\alpha_2}\le C(\alpha_1,\alpha_2)\left(
1+\frac{|u|}{U}\right)^{-2}\left(1+\frac{|v|}{V}\right)^{-2}
\end{equation}
for any $\alpha_1,\alpha_2\ge 0$, the superscript on $F$ denoting partial differentiation.
For $s_i=\sigma_i+it_i$, $i=1,2$ with $1/2 \le \sigma_i\le 1$ define
\begin{equation} \label{Hdef}
H(s_1,s_2)=\int\limits_0^{\infty}\int\limits_0^{\infty} e\left(\frac{u^c}{vq}\right)u^{s_1}v^{s_2}F(u,v)\frac{{\rm d}u{\rm d}v}{uv},
\end{equation}
where $c$ is a positive integer.
Then we have
\begin{equation} \label{H2}
H(s_1,s_2)\ll
\frac{U^{\sigma_1}V^{\sigma_2}}{(1+|t_1/c+t_2|)^2(1+|t_1|)^{1+\varepsilon}}\left(1+\frac{U^c}{V|q|}\right)^{1/2},
\end{equation}
where the implied $\ll$-constant depends only on $\varepsilon$ and $c$.
\end{lemma}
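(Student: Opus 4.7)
My plan is to evaluate $H$ by first making a change of variables that turns the main phase into $e(\pm y)$, then extracting the $t$-decay factors via a pair of integrations by parts in one direction and a second-derivative-test argument in the other.

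First, I would rescale $u\mapsto Uu$, $v\mapsto Vv$ to produce a normalised amplitude obeying \eqref{fcond} with $U=V=1$, and then, at $v$ fixed, substitute $y=U^cu^c/(Vv|q|)$ so that the principal phase $u^c/(vq)$ is replaced by $\pm y$. Writing $u^{it_j}=e(t_j\log u/(2\pi))$, the integral takes the form of an iterated oscillatory integral in $(v,y)$ with oscillatory factor
\[
e(\pm y)\cdot v^{i(t_1/c+t_2)}\cdot y^{it_1/c}
\]
against a smooth amplitude $\widetilde F(v,y)$ which inherits from \eqref{fcond} rapid decay in $v$ together with effective $y$-support $[0,Y]$ for $Y:=U^c/(V|q|)$. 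The absolute value of the Jacobian prefactor together with a trivial estimate on the resulting $y$- and $v$-integrals already recovers the size factor $U^{\sigma_1}V^{\sigma_2}$.

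To extract the factor $(1+|t_1/c+t_2|)^{-2}$, I would integrate by parts twice in $v$ using the multiplicative operator $v\partial_v$, which is the natural operator under the scale-invariant hypothesis \eqref{fcond}: applied to $v^{i(t_1/c+t_2)}$ it produces a constant multiple of $t_1/c+t_2$, while derivatives landing on $\widetilde F$ or on $y^{s_1/c}$ remain $O(1)$ uniformly in $q$. The remaining $y$-integral has effective phase $y+(t_1/(2\pi c))\log y$, whose derivative $1+t_1/(2\pi cy)$ has a unique stationary point $y_0=-t_1/(2\pi c)$ (present only when $t_1<0$). I would decompose $[0,Y]$ dyadically: on pieces bounded away from $y_0$ the derivative is bounded below, so repeated integration by parts (with $y\partial_y$) yields arbitrarily good polynomial decay in $|t_1|$; on a window around $y_0$, the standard second-derivative test contributes at most $\asymp\sqrt{|t_1|}$, which after weighing against the amplitude $y^{\sigma_1/c-1}$ is dominated by $Y^{\sigma_1/c}(1+Y)^{1/2}(1+|t_1|)^{-1-\varepsilon}$ relative to the trivial bound. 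Summing the dyadic pieces, with the $\varepsilon$ absorbing a logarithmic loss, yields the claimed estimate.

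The main technical obstacle is maintaining uniformity in $q$: after the substitution, the amplitude $\widetilde F$ depends on $q$ through its first argument, so every derivative must be phrased as a logarithmic derivative ($v\partial_v$ or $y\partial_y$) and the hypothesis \eqref{fcond} invoked in its scale-invariant form. A secondary concern is balancing the stationary-phase window against the regime $|t_1|>Y$, in which the stationary point lies outside the effective support so that integration by parts alone suffices and the $(1+Y)^{1/2}$ factor is trivially absorbed.
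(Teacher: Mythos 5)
The paper does not prove this lemma from scratch: it quotes the intermediate bound \eqref{H1} from Appendix~A of Young's paper, remarks that Young's argument is in fact valid in the full range $1/2\le\sigma_i\le 1$, corrects a misprint in Young (the factor $|s_2|^{1+\varepsilon}$ should read $|s_1|^{1+\varepsilon}$), and deduces \eqref{H2} from \eqref{H1}. Your proposal is therefore a genuinely different route, and the overall skeleton — rescale, change variables to make the phase $e(\pm y)$, extract $(1+|t_1/c+t_2|)^{-2}$ by two integrations by parts with $v\partial_v$ (which is sound and interacts correctly with the scale-invariant hypothesis \eqref{fcond}), then treat the $y$-integral by stationary phase — is reasonable in outline.

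However, the $y$-analysis has a concrete gap. After your substitution $y=U^c u^c/(Vv|q|)$ the effective $y$-range at height $v$ is $[0,\,O(Y/v)]$ with $Y:=U^c/(V|q|)$, not $[0,Y]$: small $v$ (which is not excluded by \eqref{fcond}, only large $v$ is damped) pushes $y$ far beyond $Y$. So the stationary point $y_0\asymp|t_1|/c$ lies inside the effective support for \emph{all} sizes of $|t_1|$, and the assertion that for $|t_1|>Y$ ``the stationary point lies outside the effective support so that integration by parts alone suffices'' is simply false; you still have to control the saddle contribution coming from the sub-range $v\lesssim Y/|t_1|$. More seriously, the ``dominance'' claim — that the second-derivative-test contribution, weighed against the amplitude $y^{\sigma_1/c-1}$, is $\ll Y^{\sigma_1/c}(1+Y)^{1/2}(1+|t_1|)^{-1-\varepsilon}$ — is asserted rather than proved. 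If one actually carries out the $v$-integration of the saddle contribution $\asymp a(y_0)|\phi''(y_0)|^{-1/2}\asymp |t_1|^{\sigma_1/c-1/2}\widetilde F(v,y_0)$ against $v^{\sigma_1/c+\sigma_2-1}$ and includes the prefactor $Y^{-\sigma_1/c}$ from the Jacobian, the resulting bound is only marginally consistent with \eqref{H2}, and in the transitional regime $|t_1|\asymp Y$ (precisely where \eqref{H2} is tightest) your sketch does not show where the extra $(1+|t_1|)^{-\varepsilon}$ is to come from. This is the step that needs a real argument, and without it the proof does not close.
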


\begin{proof} In Appendix A in \cite{Young} it is proved that
\begin{equation} \label{H1}
H(s_1,s_2)\ll
\frac{U^{\sigma_1}V^{\sigma_2}}{|(s_1/c+s_2)(s_1/c+s_2+1)|\cdot |s_1|^{1+\varepsilon}}\left(1+\frac{U^c}{V|q|}\right)^{1/2}.
\end{equation}
We note that this proof is valid for all $\sigma_i$ with $1/2\le \sigma_i\le 1$ ($i=1,2$). We further note that the term $|s_2|^{1+\varepsilon}$ in the first estimate in Appendix A in \cite{Young} is a mis-print and needs to be replaced by $|s_1|^{1+\varepsilon}$. From (\ref{H1}) one easily deduces (\ref{H2}).
\end{proof}

We shall further use the following lemma on the number of zeros of Dirichlet $L$-functions.

\begin{lemma} \label{zeronumb}
Let $T$ be any real number and $\chi$ be a Dirichlet character with conductor $l$. Then the number of zeros of $L(s,\chi)$ with imaginary part lying in an interval of the form $[T,T+1]$ is bounded by
\[ \ll\log(l(2+|T|)). \]
\end{lemma}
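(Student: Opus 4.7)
The plan is to apply Jensen's formula (equivalently, the Littlewood–style bound on zero counts for analytic functions of finite order in a disc). Fix $T\in\mathbb{R}$ and let $\chi$ have conductor $l$. Choose the base point $s_0 = 2 + i(T+1/2)$, and consider two concentric discs $D_r\subset D_R$ centred at $s_0$ with radii satisfying $3/2 < r < R$; any zero $\rho$ of $L(s,\chi)$ with $\Im\rho\in[T,T+1]$ automatically lies in $D_r$, because $|\Re\rho - 2|\le 2$ and $|\Im\rho - (T+1/2)|\le 1/2$.

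Jensen's inequality then yields
\[
\bigl(\text{number of zeros in }D_r\bigr)\cdot\log(R/r)
\;\le\; \frac{1}{2\pi}\int_0^{2\pi}\log\bigl|L(s_0+Re^{i\theta},\chi)\bigr|\,\mathrm{d}\theta
- \log\bigl|L(s_0,\chi)\bigr|.
\]
Hence it suffices to obtain a lower bound on $|L(s_0,\chi)|$ and a pointwise upper bound on $|L(s,\chi)|$ along the circle $|s-s_0|=R$.

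For the lower bound, $\Re s_0 = 2$ lies well inside the region of absolute convergence of the Euler product, so $|L(s_0,\chi)|\asymp 1$ uniformly in $\chi$ and $T$. For the upper bound, $s$ on the boundary of $D_R$ has $\Re s\in[2-R,2+R]$ and $|\Im s|\le|T|+R+1/2$. Choosing $R$ so that $2-R>0$ lies to the right of the critical strip is optional; more efficiently, we invoke the Phragmén–Lindelöf convexity principle applied to the functional equation of $L(s,\chi)$, which supplies the polynomial bound
\[
\bigl|L(s,\chi)\bigr|\;\ll\;\bigl(l(2+|\Im s|)\bigr)^{C}
\]
uniformly for $s$ in any fixed vertical strip, with an absolute constant $C$. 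Taking logarithms and inserting into the Jensen bound gives
\[
\bigl(\text{number of zeros in }D_r\bigr)\;\ll\;\log\bigl(l(2+|T|)\bigr),
\]
which, since all zeros with imaginary part in $[T,T+1]$ are counted, is precisely the claim.

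The only substantive ingredient is the convexity bound for $L(s,\chi)$ in the critical strip, which is entirely standard; everything else is the routine machinery of Jensen's formula plus Euler-product convergence at $\Re s = 2$. No new ideas relative to the classical proof of the Riemann–von Mangoldt formula for Dirichlet $L$-functions are required, and indeed the lemma is nothing but the ``local'' form of that formula, so an alternative plan is simply to quote the Riemann–von Mangoldt asymptotic $N(T,\chi) = (T/\pi)\log(lT/(2\pi e)) + O(\log(l(2+|T|)))$ and difference it between heights $T$ and $T+1$.
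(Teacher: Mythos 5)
Your approach is correct in substance, and since the paper simply cites this as Lemma~2.6.4 of Br\"udern's textbook without reproducing an argument, your Jensen/convexity sketch is essentially the standard proof that underlies the cited result (and also underlies Theorem~5.8 of Iwaniec--Kowalski, which the paper leans on elsewhere). The alternative you mention --- differencing the Riemann--von Mangoldt asymptotic for $N(T,\chi)$ at heights $T$ and $T+1$ --- is likewise standard and equivalent.

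Two small points worth tightening. First, your choice $3/2 < r$ is numerically too small: a nontrivial zero with $\Re\rho$ near $0$ and $\Im\rho$ near $T$ sits at distance up to $\sqrt{17}/2 \approx 2.06$ from $s_0 = 2+i(T+1/2)$, so you need $r$ (and hence $R$) at least slightly larger than that; any fixed constants with $\sqrt{17}/2 < r < R$ will do, since only the ratio $R/r$ and the convexity exponent enter the final bound. Second, if $\chi$ is principal (conductor $l=1$), $L(s,\chi)=\zeta(s)$ has a pole at $s=1$, which for small $|T|$ lies inside $D_R$; one should either restrict to nonprincipal $\chi$, multiply by $(s-1)$ before applying Jensen, or handle $|T|\le C$ separately. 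Neither point affects the shape of the conclusion.
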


\begin{proof} This follows immediately from Lemma 2.6.4. in \cite{Bru}.
\end{proof}

We further use the following subconvexity bound for Dirichlet $L$-functions in the conductor aspect which is a consequence of the bound for character sums due to D. A. Burgess \cite{DAB}.

\begin{lemma} \label{Lest} Let $\chi$ be a Dirichlet character modulo $l$ with conductor $q$. Then
\begin{equation}
L\left(\frac{1}{2}+it,\chi\right)\ll (1+|t|)q^{3/16}l^{\varepsilon},
\end{equation}
where the implied $\ll$-constant depends only on $\varepsilon$.
\end{lemma}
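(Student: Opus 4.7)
The plan is to reduce to the primitive case, then combine the approximate functional equation for $L(s,\chi^*)$ with the Burgess bound for incomplete character sums via partial summation.

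First, let $\chi^*$ be the primitive character modulo $q$ that induces $\chi$. Then
\[
L(s,\chi)=L(s,\chi^*)\prod_{p\mid l,\ p\nmid q}\bigl(1-\chi^*(p)p^{-s}\bigr),
\]
and on the line $\Re s=1/2$ the finite Euler product is bounded by $\prod_{p\mid l}(1+p^{-1/2})\ll d(l)\ll l^{\varepsilon}$. So it suffices to prove the stated bound with $\chi^*$ in place of $\chi$ and with $q$ in place of $l$ in the error factor, i.e.\ it suffices to show
\[
L\!\left(\tfrac{1}{2}+it,\chi^{*}\right)\ll (1+|t|)\,q^{3/16+\varepsilon}.
\]

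Next I would invoke the standard approximate functional equation for $L(s,\chi^*)$ at $s=1/2+it$: there exist smooth cutoff functions $V_{1},V_{2}$ of rapid decay and a complex number $\epsilon(\chi^*,t)$ of modulus $1$ such that
\[
L\!\left(\tfrac{1}{2}+it,\chi^{*}\right)=\sum_{n=1}^{\infty}\frac{\chi^{*}(n)}{n^{1/2+it}}V_{1}\!\left(\frac{n}{N}\right)+\epsilon(\chi^{*},t)\sum_{n=1}^{\infty}\frac{\overline{\chi^{*}(n)}}{n^{1/2-it}}V_{2}\!\left(\frac{n}{N}\right),
\]
with $N\asymp\sqrt{q(1+|t|)}$. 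The rapid decay of $V_{i}$ lets us truncate both sums at $n\le N^{1+\varepsilon}$ at the cost of a negligible error.

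Now I would apply Burgess's bound (from \cite{DAB}) with parameter $r=2$, which for any $M\ge1$ gives
\[
\Bigl|\sum_{n\le M}\chi^{*}(n)\Bigr|\ll M^{1/2}q^{3/16}(\log q)^{1/2}.
\]
Combining this with partial summation in the truncated Dirichlet series shows that for any $M\le N^{1+\varepsilon}$,
\[
\sum_{n\le M}\frac{\chi^{*}(n)}{n^{1/2+it}}V_{i}\!\left(\frac{n}{N}\right)\ll q^{3/16+\varepsilon}\bigl(1+|t|\bigr)\log M,
\]
where the factor $1+|t|$ enters through differentiating $u^{-1/2-it}$ inside the partial summation integral (and through the derivatives of $V_{i}$). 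Summing the two contributions in the approximate functional equation and restoring the Euler product factor in Step~1 yields the claim, with $\varepsilon$ absorbed via $q^{\varepsilon}\le l^{\varepsilon}$.

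There is no genuine obstacle: Burgess's inequality is the deep input, and everything else is bookkeeping. The only point requiring care is the dependence on $t$: the approximate functional equation produces an $N\asymp\sqrt{q(1+|t|)}$, and the partial-summation step introduces one extra factor of $(1+|t|)$ from differentiating $n^{-1/2-it}$. This is precisely the source of the $(1+|t|)$ factor in the lemma, and it is also why we do not obtain a bound subconvex in the $t$-aspect from this argument (which we do not need).
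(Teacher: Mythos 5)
The paper's own proof is a one-line citation to Theorem~12.9 of Iwaniec--Kowalski, which \emph{is} the Burgess subconvexity bound for Dirichlet $L$-functions; your proposal essentially unpacks the proof of that cited theorem (approximate functional equation at the critical line, Burgess's character-sum estimate with $r=2$, partial summation). In that sense the deep input is identical, and what you add is the explicit reduction from the imprimitive $\chi$ to its primitive inducer $\chi^{*}$, which the paper's citation leaves implicit since Theorem~12.9 is stated for primitive characters; this is a worthwhile step to spell out. One small inaccuracy in the bookkeeping: your partial-summation step produces a factor $(1+|t|)\log M$ with $M\ll N^{1+\varepsilon}$ and $N\asymp\sqrt{q(1+|t|)}$, so what you actually obtain is $\ll (1+|t|)\bigl(\log q+\log(2+|t|)\bigr)q^{3/16+\varepsilon}$. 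The $\log q$ can be absorbed into $q^{\varepsilon}\le l^{\varepsilon}$, but the $\log(2+|t|)$ cannot be absorbed into $l^{\varepsilon}$, so you end up with $(1+|t|)\log(2+|t|)\,q^{3/16}l^{\varepsilon}$ rather than the exact shape $(1+|t|)q^{3/16}l^{\varepsilon}$ stated in the lemma. This is harmless for the application --- the paper explicitly remarks that a much weaker bound would suffice --- but it should be noted rather than silently absorbed; alternatively one can just cite Theorem~12.9 of \cite{IwKo} directly as the paper does, which already comes with a polynomial $t$-dependence of the required quality.
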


\begin{proof}
This follows from Theorem 12.9. in \cite{IwKo}.
\end{proof}

In fact, a much weaker bound than the one given in Lemma~\ref{Lest} would suffice for our purposes.  Next, we shall also use the following bound for the logarithmic derivative of $L(s,\chi)$.

\begin{lemma} \label{logdiv}
Let $s_0=\sigma_0+it_0$ be any complex number with $1/2\le \sigma_0\le 1$ and $\chi$ be a Dirichlet character with conductor $l$. Assume that $s_0$ has distance at least $D$ to all zeros of $L(s,\chi)$. Then
\[ \frac{L'}{L}(s_0,\chi)\ll \log(l(2+|t_0|))\left(1+\frac{1}{D}\right). \]
\end{lemma}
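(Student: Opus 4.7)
The plan is to invoke the standard partial fraction expansion for $L'/L$ coming from the Hadamard product of the completed $L$-function, and then exploit the two hypotheses (distance $\ge D$ from zeros, and the zero-counting bound of Lemma~\ref{zeronumb}) to control the resulting sum over zeros.

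More concretely, I would begin with the classical identity (see e.g.\ Davenport, \textit{Multiplicative Number Theory}, Ch.~12) which asserts that for any $s=\sigma+it$ with, say, $-1\le \sigma \le 2$,
\[
\frac{L'}{L}(s,\chi) \;=\; \sum_{\rho:\,|t-\gamma|\le 1}\frac{1}{s-\rho} \;+\; O\bigl(\log(l(2+|t|))\bigr),
\]
where the sum is restricted to non-trivial zeros $\rho=\beta+i\gamma$ of $L(s,\chi)$ with imaginary part within $1$ of $t$. This identity is derived by subtracting the Hadamard-style partial fraction expansion of $L'/L$ at the point $s$ from its value at $2+it$ (where the Dirichlet series converges absolutely, giving an $O(1)$ contribution), and using the estimate for the gamma-factor contribution and the zero-free region of the quotient.

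Specializing to $s=s_0=\sigma_0+it_0$ with $1/2\le\sigma_0\le 1$, every zero $\rho$ with $|\gamma-t_0|\le 1$ satisfies $|s_0-\rho|\ge D$ by hypothesis, hence contributes at most $1/D$ to the finite sum. By Lemma~\ref{zeronumb} the number of such zeros is $\ll\log(l(2+|t_0|))$. Multiplying, the sum is $\ll D^{-1}\log(l(2+|t_0|))$, which combined with the error term $O(\log(l(2+|t_0|)))$ yields
\[
\frac{L'}{L}(s_0,\chi) \;\ll\; \log(l(2+|t_0|))\Bigl(1+\tfrac{1}{D}\Bigr),
\]
as required.

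No real obstacle is expected here: the lemma is essentially a packaging of two standard facts (the partial fraction expansion and the zero-counting estimate). The only mild care needed is to ensure the constants in the partial fraction identity are indeed uniform in the conductor $l$, which follows because the functional equation for $L(s,\chi)$ introduces only a factor of $\log l$ in the gamma-factor contribution, already absorbed in the $\log(l(2+|t_0|))$ error term.
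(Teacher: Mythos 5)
Your argument is correct and is essentially the paper's own: the paper simply cites Lemma 2.6.4 and Satz 2.6.2 of Br\"udern's book, which are precisely the zero-counting estimate (used here as Lemma~\ref{zeronumb}) and the partial fraction expansion of $L'/L$ that you invoke from Davenport, and the deduction from those two ingredients is exactly as you describe. The only unremarked subtlety (shared by the paper's statement) is that for the principal character the pole of $L(s,\chi)$ at $s=1$ contributes an extra $-1/(s_0-1)$ term not controlled by the distance-to-zeros hypothesis, but in the paper's application the relevant points $s_0$ stay near $\Re s = 1/2$, so this causes no trouble.
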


\begin{proof} This is an immediate consequence of Lemma 2.6.4. and Satz 2.6.2. in \cite{Bru}.
\end{proof}

We shall also need the following result on the moments of Dirichlet $L$-functions.

\begin{lemma} \label{second}
Let $r=2$ or $r=4$. Then, for $T>0$, we have
\[
\sum\limits_{\chi \ \mbox{\rm \scriptsize mod } q}\ \int\limits_{-T}^T |L(1/2+it,\chi)|^{r} {\rm d}t \ll (qT)^{1+\varepsilon},
\]
where the implied $\ll$-constant depends only on $\varepsilon$.
\end{lemma}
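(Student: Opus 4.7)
The plan is to treat the two exponents separately, with the approximate functional equation as a common starting point. For a character $\chi$ modulo $q$, the standard approximate functional equation yields
$$L\left(\tfrac{1}{2}+it,\chi\right) = \sum_{n\le N} \frac{\chi(n)}{n^{1/2+it}} + \kappa(\chi,t)\sum_{n\le M} \frac{\bar\chi(n)}{n^{1/2-it}} + O\left((q(1+|t|))^{-A}\right)$$
for suitable cutoffs $N,M\asymp \sqrt{q(1+|t|)}$ and $|\kappa|=1$, modulo smooth weights which I would suppress in this sketch. The claim is then translated into mean values of Dirichlet polynomials against the character family.

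For $r=2$, I would square the approximate functional equation and sum over $\chi\bmod q$ and $t\in[-T,T]$. Applying the hybrid large-sieve/mean-value inequality
$$\sum_{\chi\bmod q}\int_{-T}^{T}\Bigl|\sum_{n\le Y}a_n\chi(n)n^{-it}\Bigr|^2\,\dif t \ll (qT+Y)\sum_{n\le Y}|a_n|^2(\log 2Y)$$
with $Y\asymp \sqrt{qT}$ and $a_n=n^{-1/2}$ immediately gives $(qT+\sqrt{qT})\log^{O(1)}(qT)\ll (qT)^{1+\varepsilon}$. The cross-terms between the direct sum and its dual produced by the functional equation are treated identically after conjugation. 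For $r=4$, the natural route is to write $|L(\tfrac12+it,\chi)|^2$ itself, via the squared approximate functional equation, as a Dirichlet polynomial of length $\ll (qT)^{1+\varepsilon}$ with coefficients supported on integers $k=mn$ and equal to $\sum_{mn=k}(mn)^{-1/2}$ up to smooth weights. The fourth moment then becomes the second moment of this polynomial, and the same large-sieve/mean-value bound applied to it reduces the task to estimating $\sum_{k\le qT}\tau(k)^2/k\ll (\log qT)^4$, yielding the desired $(qT)^{1+\varepsilon}$.

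The main obstacle is the $r=4$ case, where one must engineer the Dirichlet polynomial representation of $|L|^2$ so that its effective length is $qT$ rather than $(qT)^2$; this is precisely the content of Heath-Brown's fourth moment theorem, and absent a careful arrangement of the cutoffs in the approximate functional equation one would lose a power of $qT$. In the actual write-up I would quote the $r=4$ bound directly from Heath-Brown, and the $r=2$ bound from a standard reference such as Iwaniec--Kowalski, rather than reproduce the technical details.
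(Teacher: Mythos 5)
The paper's ``proof'' is a one-line citation to Theorem~10.1 of Montgomery's \emph{Topics in Multiplicative Number Theory}, so your proposal is doing something genuinely different in kind: it reconstructs (in sketch) the argument underlying that reference, via the approximate functional equation combined with the hybrid Gallagher--Montgomery large-sieve mean-value theorem. The outline is correct in spirit. For $r=2$ the computation is straightforward and your numbers check out: $Y\asymp\sqrt{qT}$, $a_n=n^{-1/2}$, and the hybrid bound gives $(qT+\sqrt{qT})\log^{O(1)}(qT)$, as you say (one does need to handle the $t$-dependence of the AFE cutoff, which your ``modulo smooth weights'' caveat acknowledges). For $r=4$ your diagnosis of the obstruction is exactly right: one must realize $|L(1/2+it,\chi)|^2$ as a Dirichlet polynomial of effective length $qT$, not $(qT)^2$. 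In practice this is done by using the approximate functional equation for the product $L(s,\chi)L(1-s,\overline{\chi})$ (equivalently for $|L|^2$) directly, rather than by squaring the AFE for $L$ itself, so that the divisor-weighted coefficients appear cleanly; squaring the length-$\sqrt{qT}$ polynomial works too but produces cross terms with the $\kappa$-factor that require more care. One attribution quibble: Heath-Brown's fourth-moment theorems concern $\zeta$ (with secondary main term) and the pure conductor aspect of Dirichlet $L$-functions; for this hybrid $\chi$-and-$t$ statement the natural reference is precisely the one the authors give (Montgomery, Th.~10.1), or Ramachandra. Your concluding instinct to cite rather than reprove matches the paper.
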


\begin{proof} This follows from Theorem 10.1 in \cite{Mon}.
\end{proof}

Furthermore, we shall use the following familiar bounds and evaluations for Gauss sums which, for a Dirichlet character $\chi$ of conductor $l$ and $\gcd(a,l)=1$, are defined to be
\begin{equation} \label{gausssumdef}
 \tau_a(\chi) = \sum_{b\bmod{l}} \chi(b) e \left( \frac{ab}{l} \right).
\end{equation}

\begin{lemma} \label{gauss}
Let $a$ and $l$ be natural numbers with $\gcd(a,l)=1$ and $\chi$ be a Dirichlet character with conductor $l$, then we have the following.
\begin{enumerate}
\item The Gauss sum is estimated in the following way.
\[ |\tau_a(\chi)|\le \sqrt{l}. \]

\item If $\chi$ is real and $l$ is odd and square-free,
\[ \tau_a(\chi) = \left\{ \begin{array}{ll} \chi(a) \sqrt{l} & if \; l \equiv 1 \pmod{4}, \\ i \chi(a) \sqrt{l} & if \; l \equiv -1 \pmod{4}. \\  \end{array} \right. \]

\item For $k \in \intz$, we have
\[ \left| \sum_{b \bmod{l}} e \left( \frac{ab^2+kb}{l} \right) \right| \leq 2 \sqrt{l}. \]
\end{enumerate}
\end{lemma}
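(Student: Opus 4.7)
The three statements are classical identities/bounds for Gauss sums, so I would organize the plan so that each part reduces to a standard fact.

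For part (1), I would exploit primitivity of $\chi$ (which is built into the hypothesis since $\chi$ has conductor $l$). The multiplicativity identity $\tau_a(\chi)=\bar{\chi}(a)\tau_1(\chi)$, valid for $\gcd(a,l)=1$ when $\chi$ is primitive, reduces everything to $\tau_1(\chi)$. Then I would compute $|\tau_1(\chi)|^2=\tau_1(\chi)\overline{\tau_1(\chi)}$ by expanding, swapping order of summation, and applying orthogonality of characters; this yields $|\tau_1(\chi)|^2=l$, so in fact equality holds in the claimed bound.

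For part (2), under the hypotheses that $l$ is odd and square-free and $\chi$ is real and primitive, $\chi$ must coincide with the Jacobi symbol $\left(\frac{\cdot}{l}\right)$ (since real primitive characters of odd conductor are characterized by this). I would then invoke Gauss's classical evaluation
\[
\tau_1\!\left(\left(\tfrac{\cdot}{l}\right)\right)=\begin{cases}\sqrt{l},&l\equiv 1\pmod 4,\\ i\sqrt{l},&l\equiv -1\pmod 4,\end{cases}
\]
and combine it with $\tau_a(\chi)=\bar{\chi}(a)\tau_1(\chi)=\chi(a)\tau_1(\chi)$, using that $\chi$ is real. The one delicate input is the determination of the sign of the quadratic Gauss sum, which I would quote rather than re-derive.

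For part (3), the strategy is to complete the square. When $l$ is odd, $2a$ is invertible mod $l$: letting $c\equiv (2a)^{-1}k\pmod{l}$ one has $ab^2+kb\equiv a(b+c)^2-ac^2\pmod{l}$, so
\[
\sum_{b\bmod l}e\!\left(\frac{ab^2+kb}{l}\right)=e\!\left(-\frac{ac^2}{l}\right)\sum_{b\bmod l}e\!\left(\frac{ab^2}{l}\right),
\]
and the last sum has modulus $\sqrt{l}$ by a standard quadratic Gauss sum evaluation, giving the bound $\sqrt{l}<2\sqrt{l}$. For even $l$, write $l=2^s m$ with $m$ odd and apply the Chinese remainder theorem to split the sum into sums modulo $2^s$ and $m$; the odd part is handled as above, while the $2^s$ part can be handled by a direct case analysis (or by quoting the standard evaluation of quadratic Gauss sums to even modulus). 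The extra factor of $2$ in the bound $2\sqrt{l}$ absorbs any parity loss in the $2$-adic piece.

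The only genuinely non-trivial ingredient in all of this is the sign of the quadratic Gauss sum underlying (2); that is a deep classical theorem, but it is standard and I would simply cite it. Everything else is orthogonality and completing-the-square manipulations, so I do not anticipate any substantive obstacle.
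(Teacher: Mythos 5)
Your proof is correct; the paper itself simply cites Lemmas 3.1 and 3.2 of Iwaniec--Kowalski for parts (1) and (2) and formula (7.4.2) of Graham--Kolesnik for part (3), whereas you have reconstructed the underlying classical arguments. The two routes are mathematically the same material, so there is no real divergence of approach, only of presentation: the paper outsources the work to references, while you re-derive it. Two small remarks. First, in part (1) the identity $\tau_a(\chi)=\bar\chi(a)\tau_1(\chi)$ for $\gcd(a,l)=1$ holds for any $\chi$ mod $l$, not only primitive ones; primitivity is what you actually need in the second step to force $|\tau_1(\chi)|^2=l$ (for an imprimitive character the modulus can be smaller or even zero), so the logical dependence is slightly misplaced but the conclusion is fine. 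Second, in part (3) the completing-the-square step for the $2$-adic factor does require care, since $2a$ is not invertible modulo $2^s$; the clean way to finish is to quote the standard bound $\bigl|\sum_{b\bmod l} e((ab^2+kb)/l)\bigr|\le\sqrt{2l}$ for $\gcd(a,l)=1$, which is exactly what the cited reference provides and which is strictly stronger than the stated $2\sqrt{l}$. Your observation that the constant $2$ absorbs the $2$-adic loss is correct, but you should be explicit that the odd part of the argument does not literally extend to the even part without a separate (elementary but nontrivial) analysis.
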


\begin{proof} (1) and (2) follow immediately from Lemmas 3.1. and 3.2. in \cite{IwKo}.  (3) is (7.4.2) in \cite{GK}.
\end{proof}

In our estimates and evaluations, we shall use the following identity.

\begin{lemma} [Poisson Summation modulo $l$] \label{poisuml}
Let $w$ be a Schwartz-class function, $D$ a positive real number an $a$ and integer.  Then we have
\[ \sum_{\substack{d \in \intz \\ d \equiv a \bmod{l}}} w \left( \frac{d}{D} \right) = \frac{D}{l} \sum_{h \in \intz} \hat{w} \left( \frac{hD}{l} \right) e \left( \frac{ha}{l} \right). \]
\end{lemma}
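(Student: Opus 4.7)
The plan is to reduce the congruence-constrained sum on the left-hand side to a standard sum over all integers, and then apply the classical Poisson summation formula. Concretely, parametrizing the arithmetic progression by $d = a + l n$ with $n$ running over $\intz$, the left-hand side becomes
\[
\sum_{n \in \intz} f(n), \qquad \text{where } f(x) = w\!\left(\frac{a + l x}{D}\right).
\]
Since $w$ is Schwartz, so is $f$, which justifies applying the usual Poisson summation identity $\sum_{n} f(n) = \sum_{h} \hat{f}(h)$.

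Next, I would compute $\hat{f}(h)$ by the change of variables $y = (a + lx)/D$, which gives $\dif x = (D/l)\dif y$ and
\[
\hat{f}(h) = \int_{-\infty}^{\infty} w\!\left(\frac{a+lx}{D}\right) e(-hx)\,\dif x = \frac{D}{l}\, e\!\left(\frac{ha}{l}\right) \int_{-\infty}^{\infty} w(y)\, e\!\left(-\frac{hD}{l}\, y\right)\dif y = \frac{D}{l}\, e\!\left(\frac{ha}{l}\right) \hat{w}\!\left(\frac{hD}{l}\right),
\]
matching the convention $\hat{w}(y) = \int w(x) e(-xy)\,\dif x$ used throughout the paper. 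Summing over $h \in \intz$ gives precisely the right-hand side of the claimed identity.

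The only non-routine point is justifying absolute convergence of both sides, which is immediate from the Schwartz decay of $w$ (and hence $\hat w$). There is no substantive obstacle here; this is just bookkeeping around the classical Poisson formula applied to a rescaled and translated Schwartz function.
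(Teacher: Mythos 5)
Your proof is correct and complete. The paper itself gives no derivation — it simply cites (4.24) of Iwaniec--Kowalski — and what you have written out is precisely the standard reduction of the twisted progression sum to the classical Poisson summation formula via the substitution $d = a + ln$ and the change of variables in the Fourier integral, so this is essentially the same argument, just made explicit.
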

\begin{proof}  This is (4.24) in \cite{IwKo}. \end{proof}

%
%

For the estimation of certain character sums, we shall also need the following two large sieve estimates.

\begin{lemma} \label{ls} Let $q$, $N$ and $M$ be natural numbers. Further, let
$(a_n)_{n\in \mathbbm{N}}$ be a sequence of complex numbers. Then
\[ \sum\limits_{\chi \mbox{\rm \scriptsize\ mod } q} \left\vert \sum\limits_{M\le n<M+N}
a_n\chi(n) \right\vert^2 \leq (q+N) \sum\limits_{M\le n<M+N} |a_n|^2 . \]
\end{lemma}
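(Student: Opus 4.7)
The plan is to prove Lemma~\ref{ls} by expanding the square on the left-hand side, invoking orthogonality of Dirichlet characters modulo $q$, and then applying Cauchy--Schwarz within each residue class. This is the classical single-modulus large sieve inequality, and its proof is essentially a packaging of Parseval on $(\intz/q\intz)^\times$ together with a pigeonhole count.

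First, I would open the square and interchange the order of summation:
\[
\sum_{\chi \bmod q} \left| \sum_{M \le n < M+N} a_n \chi(n) \right|^2
= \sum_{M \le m,n < M+N} a_m \overline{a_n} \sum_{\chi \bmod q} \chi(m) \overline{\chi(n)}.
\]
By orthogonality, the inner character sum equals $\varphi(q)$ when $m \equiv n \pmod{q}$ and $\gcd(mn,q)=1$, and vanishes otherwise.

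Next, setting $c_r := \sum_{\substack{M \le n < M+N\\ n \equiv r \bmod q}} a_n$ for each residue class $r$ coprime to $q$, the previous display collapses to
\[
\varphi(q) \sum_{\substack{r \bmod q \\ \gcd(r,q)=1}} |c_r|^2.
\]
Since the interval $[M, M+N)$ meets each residue class modulo $q$ in at most $\lceil N/q \rceil$ points, Cauchy--Schwarz gives
\[
|c_r|^2 \le \lceil N/q \rceil \sum_{\substack{M \le n < M+N\\ n \equiv r \bmod q}} |a_n|^2,
\]
and summing over $r$ produces $\lceil N/q\rceil \sum_n |a_n|^2$. Combining the bounds yields
\[
\sum_{\chi \bmod q} \left| \sum_{M \le n < M+N} a_n \chi(n) \right|^2 \le \varphi(q) \lceil N/q \rceil \sum_{M \le n < M+N} |a_n|^2.
\]

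The final step is numerical: using $\varphi(q) \le q$ and the observation that, writing $N = kq + s$ with $0 \le s < q$, one has $q \lceil N/q \rceil \le N + q$, the prefactor is at most $N+q$, giving exactly the stated inequality. There is no real obstacle here; the only slightly delicate point is tracking the constant so that the bound comes out to $q+N$ rather than a larger multiple, which is handled by the elementary identity $q\lceil N/q \rceil \le N + q - 1$.
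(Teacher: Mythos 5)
Your proof is correct and is essentially the argument the paper has in mind: its one-line justification ("expand the modulus square and apply orthogonality") is exactly your first two steps, and your Cauchy--Schwarz count within residue classes plus the bound $q\lceil N/q\rceil \le N+q$ is the standard bookkeeping needed to turn $\varphi(q)\lceil N/q\rceil$ into the clean constant $q+N$. The only nitpick is a wording slip at the end: $q\lceil N/q\rceil \le N+q-1$ is an inequality, not an identity.
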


\begin{proof} This follows easily by expanding the modulus square and
 applying the orthogonality relation of Dirichlet characters. \end{proof}

\begin{lemma} [Heath-Brown] \label{Heathls}
Let $P$ and $N$ be positive integers, and let $a_1,...,a_N$ be arbitrary complex numbers. Then
\[
\sum_{p\sim P} \left| \sum_{n \leq N} a_n \left( \frac{n}{p} \right) \right|^2\ll_{\varepsilon} (PN)^{\varepsilon}(P+N)\sum\limits_{q\le N} \sum\limits_{\substack{n_1,n_2\leq N\\ n_1n_2=q^2}} |a_{n_1}a_{n_2}|
\]
for any $\varepsilon > 0$.
\end{lemma}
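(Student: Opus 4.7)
The plan is to deduce this result as an immediate specialisation of Heath-Brown's large sieve inequality for real quadratic characters to prime moduli. My first step would be to observe that for each odd prime $p$, the Legendre symbol $\left(\frac{\cdot}{p}\right)$ is the unique non-principal real primitive Dirichlet character modulo $p$, and that the primes $p\sim P$ form a sub-family of the odd square-free integers of size $\sim P$. By positivity of the summands on the left, the claim therefore reduces to the corresponding inequality in which the outer sum runs over all odd square-free $m$ with $m\sim P$, which is Heath-Brown's theorem in its usual formulation.

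The core of Heath-Brown's argument, which I would follow rather than reprove, proceeds as follows: decompose $n=n_0q^2$ with $n_0$ square-free, so that $\left(\frac{n}{m}\right)=\left(\frac{n_0}{m}\right)$ whenever $\gcd(q,m)=1$; invoke quadratic reciprocity to replace $\left(\frac{n_0}{m}\right)$ by $\pm\left(\frac{m}{n_0}\right)$, with a sign depending on the residue classes of $m$ and $n_0$ modulo $8$; and then apply the standard multiplicative large sieve (Lemma~\ref{ls}) to the resulting sum over $m$, with $n_0$ now playing the role of the modulus. The terms arising from $n_0=1$, i.e.\ from those $n$ which are perfect squares, produce exactly the diagonal contribution $\sum_{q\le N}\sum_{n_1n_2=q^2}|a_{n_1}a_{n_2}|$ that appears on the right-hand side.

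The delicate point I expect to be the principal obstacle is the bookkeeping of the reciprocity correction factors modulo $8$, which forces a secondary decomposition of the character sums over arithmetic progressions, together with the isolation of the principal characters that is necessary to recover the sharp diagonal term (rather than only the weaker bound by $\sum_n|a_n|^2$ that a naive application of Lemma~\ref{ls} would give). Since Lemma~\ref{Heathls} as stated is exactly Heath-Brown's theorem restricted to prime moduli---an immediate sub-case by positivity of the summands---the proof ultimately reduces to citing his result.
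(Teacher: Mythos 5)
Your proposal is, at bottom, the same as the paper's: the paper disposes of this lemma in one line by declaring it an immediate consequence of Corollary~2 in Heath-Brown's paper on mean values of real character sums, and your first and third paragraphs supply exactly the (correct but routine) intermediate observation that primes $p\sim P$ are a subfamily of the odd square-free moduli and that the restriction is permissible by positivity of the terms, then cite the result.

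One caveat worth flagging: your second paragraph, sketching ``the core of Heath-Brown's argument,'' misrepresents its difficulty. Expanding the square, applying quadratic reciprocity, and then invoking the standard multiplicative large sieve (Lemma~\ref{ls}) with $n_0$ as modulus does \emph{not} yield the sharp factor $P+N$; a straightforward application of that recipe loses badly on the off-diagonal terms (giving something of the shape $P+N^2$), and the diagonal bookkeeping you describe is not where the obstacle lies. The entire substance of Heath-Brown's theorem is that this naive loss can be avoided, via Poisson summation, an iteration scheme, and a much more delicate bilinear analysis. Since you ultimately cite rather than reprove, your proof is not wrong, but the sketch would mislead a reader into believing the result is an easy corollary of the ordinary large sieve, which it is not.
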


\begin{proof} This is an immediate consequence of Corollary 2 in \cite{DRHB}. \end{proof}

We shall require that the number of characters $\chi$ modulo $q$ satisfying an equation of the form $\overline{\chi}^3=\chi_1$ is small. To this end, we use the following bound.

\begin{lemma} \label{charnumb}
Let $\chi_1$ be a character modulo $q$. Then the number of characters $\chi$ modulo $q$ satisfying $\overline{\chi}^3=\chi_1$ is bounded by $q^{\varepsilon}$.
\end{lemma}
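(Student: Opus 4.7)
The plan is to reduce the counting to a torsion bound in the character group. The set of Dirichlet characters modulo $q$ forms a finite abelian group $\widehat{G}$ isomorphic to $G=(\intz/q\intz)^*$, and the map $\chi\mapsto \overline{\chi}^3$ is a group homomorphism. Consequently, the set $\{\chi\bmod q : \overline{\chi}^3=\chi_1\}$ is either empty or a coset of the kernel of this homomorphism. In the latter case, its size equals
\[
\#\{\chi\bmod q : \chi^3=\chi_0\},
\]
i.e., the number of $3$-torsion elements of $\widehat{G}$.

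Next, I would decompose $G$ into a product of cyclic groups. Writing $q=\prod_{p\mid q}p^{a_p}$, the Chinese Remainder Theorem gives
\[
G\cong \prod_{p\mid q} (\intz/p^{a_p}\intz)^*,
\]
and each factor is cyclic (with the harmless exception of $(\intz/2^{a}\intz)^*$ for $a\ge 3$, which is a product of two cyclic groups; this only helps by a factor of $2$). For any cyclic group $C_n$, the number of solutions of $x^3=e$ equals $\gcd(3,n)\le 3$. Multiplying across the factors yields
\[
\#\{\chi\bmod q : \chi^3=\chi_0\}\le 2\cdot 3^{\omega(q)},
\]
where $\omega(q)$ is the number of distinct prime divisors of $q$.

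Finally, I would invoke the standard estimate $3^{\omega(q)}\ll_{\varepsilon} q^{\varepsilon}$ (which follows, for example, from $3^{\omega(q)}\le d_3(q)$ where $d_3$ is the ternary divisor function, or directly from $\omega(q)\ll \log q/\log\log q$). Combining this with the previous step gives the claimed bound. The only mildly delicate point is keeping track of the anomalous structure of the $2$-part of $(\intz/q\intz)^*$, but since this contributes at most a multiplicative constant it is absorbed into the $q^{\varepsilon}$; no real obstacle arises.
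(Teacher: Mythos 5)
Your argument is correct and is essentially the paper's proof written out in full: the paper also identifies the character group with $(\intz/q\intz)^*$ and reduces to the classical bound that $x^n\equiv a\pmod q$ has $\ll q^{\varepsilon}$ solutions, which it simply cites as well-known, whereas you supply the CRT decomposition and the $3^{\omega(q)}$ count that underlie that fact. No substantive difference.
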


\begin{proof} The group of characters modulo $q$ is isomorphic to $(\mathbbm{Z}/q\mathbbm{Z})^*$. Moreover, it is well-known that for fixed $n$ and any $a$ the number of solutions $x$ of the  congruence $x^n\equiv a$ mod $q$ is bounded by $\ll q^{\varepsilon}$. Now, the desired estimate follows.
\end{proof}

Information about the conductors of characters of order 3 is provided by the following

\begin{lemma} \label{ord3}
Let $\chi$ be a primitive Dirichlet character of order 3 and modulus $q$, then
\[
 q = 9^{a} q^{*},
\]
where $a \in \{ 0, 1\}$, $\gcd (q^*, 6) =1$ and $q^*$ is a square-free
 natural number whose prime divisors are all congruent to 1 modulo 3.
\end{lemma}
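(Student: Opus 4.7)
The plan is to reduce to local components via the Chinese remainder theorem and then to classify, prime by prime, which prime-power conductors can arise for a primitive character of order dividing $3$.

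First, I would write $q=\prod_{p} p^{k_p}$ and decompose $\chi$ as a product $\chi=\prod_p \chi_p$, where $\chi_p$ is a character modulo $p^{k_p}$. Since $\chi$ is primitive of modulus $q$, each $\chi_p$ must be primitive of conductor $p^{k_p}$; and since $\chi$ has order $3$, each $\chi_p$ has order dividing $3$, with at least one local factor of order exactly $3$. The problem then becomes: for each prime $p$, for which exponents $k\ge 1$ does there exist a \emph{primitive} character modulo $p^k$ of order dividing $3$?

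Next I would go through the cases using the structure of $(\intz/p^k\intz)^*$. For $p=2$, the group $(\intz/2^k\intz)^*$ has order a power of $2$, so the only character of order dividing $3$ is trivial, which cannot be primitive for $k\ge 1$; hence $2\nmid q$. For $p=3$, one has $|(\intz/3^k\intz)^*|=2\cdot 3^{k-1}$, and the group is cyclic, so there is a unique subgroup of order $3$, which is already present at level $k=2$; a character of order $3$ modulo $3^k$ therefore factors through $(\intz/9\intz)^*$, and is primitive only when $k=2$. Thus if $3\mid q$ then $3^2\|q$, giving the $9^a$ with $a\in\{0,1\}$. For $p>3$ with $p\equiv 1\pmod 3$, the group $(\intz/p^k\intz)^*$ is cyclic of order $p^{k-1}(p-1)$, and since $\gcd(3,p)=1$, the unique order-$3$ subgroup of characters factors through the reduction map $(\intz/p^k\intz)^*\twoheadrightarrow(\intz/p\intz)^*$; hence any such character is primitive only at $k=1$. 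For $p>3$ with $p\not\equiv 1\pmod 3$, the order $p-1$ of $(\intz/p\intz)^*$ is not divisible by $3$, and the same holds for $(\intz/p^k\intz)^*$, so no nontrivial order-$3$ character exists.

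Assembling these local contributions, the primes dividing $q$ are either $3$ (appearing with exponent exactly $2$, contributing a factor $9$) or primes $p\equiv 1\pmod 3$ (each appearing with exponent $1$). This gives the desired factorization $q=9^a q^*$ with $a\in\{0,1\}$, $q^*$ square-free, $\gcd(q^*,6)=1$, and every prime divisor of $q^*$ congruent to $1\pmod 3$. The only mildly delicate step is the local analysis at $p=3$, where one must verify that the order-$3$ subgroup of $(\intz/3^k\intz)^*$ is already detected at level $k=2$ so that primitivity forces $k=2$; all other steps are routine applications of the cyclic structure of $(\intz/p^k\intz)^*$ for odd $p$.
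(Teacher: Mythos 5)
Your proposal is correct and takes essentially the same approach as the paper: for each prime $p$, exploit the cyclic structure of $(\intz/p^k\intz)^*$ to show that order-$3$ characters only exist when $3\mid\varphi(p^k)$ and that they all factor through $(\intz/p\intz)^*$ (or $(\intz/9\intz)^*$ when $p=3$), forcing $k=1$ (or $k=2$) for primitivity. The only cosmetic difference is that you set up the CRT decomposition $q=\prod p^{k_p}$, $\chi=\prod\chi_p$ explicitly at the outset and argue via the unique order-$3$ subgroup of the character group, while the paper works with the explicit parametrization $\chi_a(g^l)=e(al/m)$ and a counting argument; the mathematical content is the same.
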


\begin{proof}
We first note that the group of Dirichlet characters modulo $p^k$ for
 any prime $p \equiv 2 \pmod{3}$ and $k \in \natn$ is always of order
 $\varphi(p^k)=p^{k-1}(p-1)$ which is not divisible by 3.  Therefore,
 there exists no Dirichlet character of order 3 to these
 moduli. \newline

If $p \equiv 1 \pmod{3}$, then it is well-known that $\left( \intz / p^k
 \intz \right)^{*}$ is cyclic for any $k \in \natn$.  It is
 also well-known that if $g$ is a generator of this cyclic group and $m = \varphi(p^k) = p^{k-1}(p-1)$, then every character defined on $\left( \intz / p^k \intz \right)^{*}$ is of the form
\[
 \chi_a (g^l) = e \left( \frac{al}{m} \right),
\]
with a fixed $a$ modulo $m$.  See, for example, Chapter 3 of
 \cite{IwKo}.  Therefore, $\chi_a$ is cubic if and only if
\[
 a \equiv \pm \frac{m}{3} \pmod{m}.
\]
Therefore,  there are two primitive characters of order 3 modulo $p$ and
 only two characters of order 3 modulo $p^k$ for $k >1$.  Hence the said
 characters modulo $p^k$ with $k>1$ are not primitive as there should be
 at least two characters of order 3 induced by those of modulus
 $p$. \newline

We can also easily observe that there are two primitive characters of
 order 3 modulo 9 and by similar arguments as above, there are no
 primitive characters of order 3 modulo $3^k$ for any $k > 2$. \newline

Therefore, there exist primitive characters of order 3 modulo 9 and any prime $p$
 congruent to 1 modulo 3.  Any other primitive character of order 3 is
 a product of these.  Thus our desired result follows.
\end{proof}

We shall also need the following bound for an average of a special exponential sum.

\begin{lemma} \label{expsum} Let $c_n$ be complex numbers satisfying $c_n\le 1$ and let
\[ R(N,P,d_0,k)=\sum\limits_{P\le p <2P} \left\vert\sum\limits_{N\le n<2N} e\left(\frac{n^3d_0^3}{pk^2}\right)c_n\right\vert. \]
Then
\[ R(N,P,d_0,k)=N^{1/2}P+N^{1/4}P^{5/4}k^{1/2}d_0^{-3/4}. \]
\end{lemma}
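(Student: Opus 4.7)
I would prove the bound by first applying Cauchy–Schwarz to the outer sum over primes to remove the absolute values, and then estimating the resulting double exponential sum using a van der Corput–type bound for $\sum_p e(A/p)$.

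Write $T_p = \sum_{N\le n<2N} c_n e(n^3 d_0^3/(pk^2))$, so $R(N,P,d_0,k) = \sum_p |T_p|$. By Cauchy–Schwarz and the Chebyshev bound for the prime-counting function,
\[
R(N,P,d_0,k)^2 \le (\pi(2P)-\pi(P))\sum_{P\le p<2P}|T_p|^2 \ll P\sum_p |T_p|^2.
\]
Opening the square and swapping the orders of summation,
\[
\sum_p|T_p|^2 = \sum_{n_1,n_2\in[N,2N)} c_{n_1}\bar c_{n_2}\sum_{P\le p<2P} e\!\left(\frac{(n_1^3-n_2^3)d_0^3}{pk^2}\right).
\]
The diagonal $n_1=n_2$ contributes $\ll NP$ (using $|c_n|\le 1$ and the PNT). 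After the outer factor of $P$ and a square root, this produces the first term $N^{1/2}P$ of the claimed bound.

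For the off-diagonal, I would set $m=n_1-n_2\ne 0$, factor $n_1^3-n_2^3=m(3n_2^2+3n_2m+m^2)$, and write the inner phase as $A/p$ with $|A|\asymp |m|N^2 d_0^3/k^2$. I would estimate $\sum_{P\le p<2P}e(A/p)$ by van der Corput's second-derivative test applied to the smooth function $x\mapsto A/x$ (for which $|f''(x)|\asymp |A|/P^3$), transferred from integer sums to prime sums by Abel summation and the prime number theorem. In the relevant range $|A|\gtrsim P^2$ this yields
\[
\left|\sum_{P\le p<2P}e(A/p)\right| \ll \sqrt{P^3/|A|} \asymp \frac{P^{3/2}k}{\sqrt{|m|}\,N d_0^{3/2}}.
\]
Summing over $0<|m|<N$ and $n_2\in[N,2N)$,
\[
\sum_{m,n_2}\frac{P^{3/2}k}{\sqrt{|m|}\,Nd_0^{3/2}} \ll \frac{P^{3/2}k}{d_0^{3/2}}\sum_{m=1}^{N}\frac{1}{\sqrt m} \ll \frac{N^{1/2}P^{3/2}k}{d_0^{3/2}}.
\]
Combining the two contributions, $\sum_p|T_p|^2\ll NP+N^{1/2}P^{3/2}k/d_0^{3/2}$, so
\[
R(N,P,d_0,k)^2 \ll NP^2 + \frac{N^{1/2}P^{5/2}k}{d_0^{3/2}},
\]
and the claimed bound follows from $\sqrt{a+b}\le\sqrt a+\sqrt b$.

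The main obstacle is the case analysis required to control $\sum_p e(A/p)$ uniformly over the whole range of $|A|$: for small $|A|$ only the trivial bound $\ll P$ is available, for intermediate $|A|$ one uses Kuzmin–Landau $\ll P^2/|A|$, and for large $|A|$ the second-derivative estimate is effective. The delicate point is to verify that in the ranges where $\sqrt{P^3/|A|}$ is not the sharpest estimate, the resulting contributions are absorbed either by the diagonal $NP$ or by the off-diagonal bound $N^{1/2}P^{3/2}k/d_0^{3/2}$. A secondary technical issue is the clean passage from smooth to prime sums in the van der Corput step, which is routine via partial summation but requires care to avoid losing a logarithm that would affect the final bound.
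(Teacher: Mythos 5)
The paper does not prove this lemma; it is cited verbatim as Lemma~5.4 of Young's paper, so there is no in-paper argument to compare against. Judging your blind proposal on its own merits: the opening moves are the natural ones and are almost certainly what underlies Young's argument, but there are two issues, one of which you can fix easily and one of which is a genuine gap.

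The worry you raise about transferring the van der Corput bound from integer sums to prime sums is legitimate as you have set things up, but it is also entirely avoidable, and the way you propose to handle it (``Abel summation and the prime number theorem'') would not work: the error term in $\pi(x)$, once differentiated against the rapidly oscillating kernel $e(A/t)$, produces a contribution of size roughly $|A|P^{-1}\exp(-c\sqrt{\log P})$, which is not under control when $|A|$ is large. The clean fix is to enlarge the prime sum to an integer sum \emph{before} opening the square: since each $|T_n|^2\ge 0$, one has
\[
\sum_{P\le p<2P}|T_p|^2\ \le\ \sum_{P\le n<2P}|T_n|^2,
\]
so after Cauchy--Schwarz one may replace the outer sum by a sum over all integers $n\in[P,2P)$ at no cost. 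Only then expand the square; the resulting inner exponential sum $\sum_{P\le n<2P}e(A/n)$ is over a full range of integers and the second-derivative (van der Corput) test applies directly, with no prime-to-integer transfer needed.

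The second issue is the one you flag at the end but do not resolve, and it is a real gap. The second-derivative test for $f(n)=A/n$ on $[P,2P)$, where $|f''|\asymp|A|/P^3$, gives
\[
\sum_{P\le n<2P}e(A/n)\ \ll\ \sqrt{|A|/P}+\sqrt{P^3/|A|},
\]
together with the trivial bound $P$ when $|A|$ is small. You retain only the term $\sqrt{P^3/|A|}$. Feeding the omitted term $\sqrt{|A|/P}$, with $|A|\asymp |m|N^2d_0^3/k^2$, back through the $m,n_2$ sums produces a contribution to $R^2$ of order $N^{7/2}d_0^{3/2}P^{1/2}k^{-1}$, which is not visibly dominated by either $NP^2$ or $N^{1/2}P^{5/2}kd_0^{-3/2}$ without further hypotheses relating $N,P,k,d_0$ (for example $N\ll P$ and $d_0^3\ll k^2$, which do hold in the application but are not part of the lemma as stated). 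Likewise the range where $|A|\ll P$, in which $\sqrt{P^3/|A|}>P$ and only the trivial bound is available, needs to be treated separately. Until this case analysis is carried out — or until the lemma is stated with the side conditions that make the unwanted ranges vacuous — the proposal establishes $R\ll N^{1/2}P+N^{1/4}P^{5/4}k^{1/2}d_0^{-3/4}$ only in part of the parameter range, not in general.

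Two minor remarks: the ``$=$'' in the statement should be ``$\ll$'', and the coefficients satisfy $|c_n|\le 1$ (your reading is the intended one).
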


\begin{proof} This is quoted from \cite{Young} and is Lemma 5.4. there.
\end{proof}

To estimate certain mean values of Dirichlet polynomials twisted with
characters, we shall use the following two lemmas attributed to Gallagher.

\begin{lemma} [Gallagher] \label{gal1}
Let $T_0$ and $T \geq \delta >0$ be real numbers, $S$ be a continuous complex valued function
 on $[T_0, T_0+T]$ with continuous derivatives on $(T_0, T_0+T)$, and
 $\mathcal{S}$ be a finite set in the interval $[T_0+\delta/2,
 T_0+T-\delta/2]$ with the property that
\[ |t_1-t_2| \geq \delta, \; \mbox{for all distinct} \; t_1, t_2 \in \mathcal{S} .  \]
Then we have
\[ \sum_{t \in \mathcal{S}} |S(t)|^2 \leq \frac{1}{\delta} \int_{T_0}^{T_0+T} |S(t)|^2 \dif t + \left( \int_{T_0}^{T_0+T} |S(t)|^2
				      \dif t \right)^{1/2} \left( \int_{T_0}^{T_0+T} |S'(t)|^2
				      \dif t \right)^{1/2} . \]
\end{lemma}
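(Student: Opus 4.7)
The plan is to adapt Gallagher's classical argument. For each $t \in \mathcal{S}$ I would start from the trivial identity
\begin{equation*}
|S(t)|^2 = \frac{1}{\delta}\int_{t-\delta/2}^{t+\delta/2} |S(u)|^2 \dif u + \frac{1}{\delta}\int_{t-\delta/2}^{t+\delta/2} \bigl(|S(t)|^2 - |S(u)|^2\bigr) \dif u,
\end{equation*}
which simply splits the desired quantity into a local average plus a remainder. The first term will produce the $1/\delta$ contribution once summed, because the separation condition $|t_1-t_2|\ge \delta$ combined with $\mathcal{S}\subseteq [T_0+\delta/2,\, T_0+T-\delta/2]$ ensures that the intervals $I_t := [t-\delta/2,\, t+\delta/2]$ are pairwise disjoint and entirely contained in $[T_0,T_0+T]$. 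Hence summing $\int_{I_t}|S(u)|^2\dif u$ over $t\in\mathcal{S}$ is bounded by $\int_{T_0}^{T_0+T}|S(u)|^2\dif u$.

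For the remainder term I would apply the fundamental theorem of calculus,
\begin{equation*}
|S(t)|^2 - |S(u)|^2 = \int_u^t \frac{\dif}{\dif v}|S(v)|^2 \dif v,
\end{equation*}
and then switch the order of the $u$- and $v$-integrations. The key observation is that, for fixed $v\in I_t$, the set of $u\in I_t$ for which $v$ lies between $u$ and $t$ has measure $|v-t|\le \delta/2$. Thus after Fubini the outer $1/\delta$ is effectively reduced to $1/2$, and combining this with the pointwise bound $\bigl|\frac{\dif}{\dif v}|S(v)|^2\bigr|\le 2|S(v)||S'(v)|$ produces the clean estimate $\int_{I_t}|S(v)||S'(v)|\dif v$ for the remainder.

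Summing over $t\in\mathcal{S}$ and invoking the disjointness of the $I_t$ once more extends both integrals to the full range $[T_0,T_0+T]$, and a final Cauchy--Schwarz converts $\int|S||S'|$ into the product of $L^2$-norms in the statement. There is no substantial obstacle here; the delicate point that makes this a genuine lemma rather than a one-line estimate is the bookkeeping of constants. A naive application would leave a spurious factor of $2$ on the cross-term, and it is precisely the cancellation of the $\delta/2$ from the Fubini step against the $2$ from the derivative of $|S|^2$ that recovers the sharp coefficient $1$ asserted in the lemma.
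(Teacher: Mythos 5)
Your argument is correct and gives an elementary, self-contained proof of the lemma, whereas the paper simply cites Lemma 1.4 of Montgomery's \emph{Topics in Multiplicative Number Theory} without reproving it. Your route is essentially the classical Gallagher argument unpacked: a local-average decomposition on the pairwise-disjoint intervals $I_t=[t-\delta/2,\,t+\delta/2]$, the fundamental theorem of calculus on $|S|^2$, Fubini, and Cauchy--Schwarz. This is the standard way to derive the statement from scratch, so it is an elementary rather than genuinely novel alternative, but it does supply the mechanism that the paper leaves to the reference.

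One small arithmetic slip, which does not affect the conclusion: for fixed $v\in I_t$, the set of $u\in I_t$ with $v$ lying between $u$ and $t$ is $[t-\delta/2,\,v]$ when $v<t$ and $[v,\,t+\delta/2]$ when $v>t$, so its measure is $\delta/2-|v-t|$, not $|v-t|$ as you wrote. Both quantities are bounded by $\delta/2$, which is the only fact you actually use, so the Fubini step still turns the prefactor $1/\delta$ into $1/2$, and this $1/2$ then cancels the $2$ coming from $\bigl|\frac{\dif}{\dif v}|S(v)|^2\bigr|\le 2|S(v)||S'(v)|$ exactly as you describe. The rest of the bookkeeping --- disjointness to extend the integrals to $[T_0,T_0+T]$, followed by Cauchy--Schwarz --- is correct.
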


\begin{proof} This follows from Lemma 1.4 in \cite{Mon} and has its origin in \cite{PXG}.
\end{proof}

\begin{lemma} [Gallagher] \label{gal2}
If
\[
 \sum_{n=1}^{\infty} a_n n^{-s}
\]
is absolutely convergent for $\Re s \geq 0$, then
\[
 \int_{-T}^T \left| \sum_{n=1}^{\infty} a_n n^{-it} \right|^2 \dif t \ll
 T^2 \int_0^{\infty} \left| \sum_y^{\tau y} a_n \right|^2 \frac{\dif y}{y},
\]
where $\tau = \exp (1/T)$ and $T>0$.
\end{lemma}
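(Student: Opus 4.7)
The plan is to derive an exact formula relating $\sum_n a_n n^{-it}$ to a Mellin-type integral of the short sums $A(y) := \sum_{y\le n<\tau y} a_n$, and then to apply Plancherel together with a pointwise bound near $t = 0$. The absolute convergence of the Dirichlet series on $\Re s \ge 0$ is what will justify all interchanges of summation and integration.

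First I would write, by Fubini (valid by the absolute convergence hypothesis),
\begin{equation*}
\int_0^{\infty} A(y) y^{-it}\frac{\dif y}{y} = \sum_{n=1}^{\infty} a_n \int_{n/\tau}^{n} y^{-it-1}\dif y = \frac{1-\tau^{it}}{-it}\sum_{n=1}^{\infty} a_n n^{-it},
\end{equation*}
since $y\le n<\tau y$ is equivalent to $n/\tau<y\le n$. Inverting this relation yields
\begin{equation*}
\sum_{n=1}^{\infty} a_n n^{-it} = \frac{-it}{1-\tau^{it}}\int_0^{\infty} A(y)\, y^{-it}\,\frac{\dif y}{y}.
\end{equation*}

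Next I would estimate the prefactor. Since $\tau = e^{1/T}$, one has $\tau^{it}=e^{it/T}$ and hence
$|1-\tau^{it}| = 2|\sin(t/(2T))| \ge 2\cdot |t|/(\pi T)$ for $|t|\le T$, by the elementary bound $|\sin x|\ge 2|x|/\pi$ on $[-\pi/2,\pi/2]$. Therefore
\begin{equation*}
\left|\frac{-it}{1-\tau^{it}}\right| \ll T \qquad \text{for } |t|\le T.
\end{equation*}
Squaring and integrating in $t$ then gives
\begin{equation*}
\int_{-T}^T\left|\sum_{n=1}^\infty a_n n^{-it}\right|^2\dif t \ll T^2 \int_{-T}^T \left|\int_0^{\infty} A(y)\, y^{-it}\frac{\dif y}{y}\right|^2\dif t.
\end{equation*}

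Finally, to bound the inner integral I would substitute $y=e^u$ so that $\dif y/y = \dif u$, and the inner integral becomes the Fourier transform at $t$ of the function $B(u):=A(e^u)$. Extending the range of $t$-integration from $[-T,T]$ to $\mathbb{R}$ (which can only enlarge the integral) and applying Plancherel's theorem gives
\begin{equation*}
\int_{-T}^T \left|\int_{-\infty}^{\infty} B(u)e^{-itu}\dif u\right|^2 \dif t \le 2\pi\int_{-\infty}^{\infty}|B(u)|^2\dif u = 2\pi \int_0^{\infty}|A(y)|^2\frac{\dif y}{y},
\end{equation*}
which combined with the previous display yields the desired estimate. There is no serious obstacle here; the only care needed is the lower bound on $|1-\tau^{it}|$ (which forces the constraint $|t|\le T$ to match the scale $\tau=e^{1/T}$) and the justification of Fubini at the very first step, both of which are routine.
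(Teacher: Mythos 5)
Your proof is correct, and it is essentially the standard argument (the same one that appears, modulo presentation, behind Lemma~1.10 of Montgomery's book that the paper cites without proof): express the Dirichlet polynomial as a multiplier times the Mellin transform of the block sums $A(y)$, bound the multiplier $\bigl|{-it}/(1-\tau^{it})\bigr|\ll T$ for $|t|\le T$ using $|1-e^{i\theta}|=2|\sin(\theta/2)|$ and Jordan's inequality, substitute $y=e^u$, and finish with Plancherel. Since the paper offers only a citation, there is nothing to contrast; your write-up supplies a self-contained proof of exactly what is claimed.

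Two small remarks on rigor that do not affect correctness. First, the inversion $\sum_n a_n n^{-it} = \frac{-it}{1-\tau^{it}}\int_0^\infty A(y)y^{-it}\,\dif y/y$ is only literally a division for $t\ne 0$; at $t=0$ the prefactor has a removable singularity with limiting value $1/\log\tau = T$, and one checks directly that both sides agree, so the identity holds for all $|t|\le T$ (and in any case a single point is harmless under the integral). Second, the Fubini and Plancherel steps need $A\in L^1(\dif y/y)\cap L^2(\dif y/y)$; the $L^1$ bound follows from
\[
\int_0^\infty |A(y)|\,\frac{\dif y}{y}\le \sum_n |a_n|\int_{n/\tau}^n\frac{\dif y}{y}=\frac{1}{T}\sum_n|a_n|<\infty,
\]
which uses the hypothesis of absolute convergence at $\Re s=0$, and if $\int_0^\infty|A(y)|^2\,\dif y/y=\infty$ the asserted inequality is vacuous, so one may assume $A\in L^2(\dif y/y)$ as well. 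With these observations made explicit, the argument is complete.
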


\begin{proof} This is Lemma 1.10 in \cite{Mon} and has its origin in \cite{Galla}.
\end{proof}

Finally, we shall need the following technical estimates.

\begin{lemma} \label{f}
Let $D\ge 1$ and $a$ be a natural number. Then we have
\begin{equation} \label{f2}
\begin{split}
\sum\limits_{d\le D}
\frac{d{d^*}^{1/2}}{d_0^{1/2}{d'}^{3/2}} \ll D^{1/2+\varepsilon},\ \ \ \ \ &
\sum\limits_{d\le D}
\frac{d{d^*}^{1/2}}{d_0{d'}^{3/2}} \ll D^{1/4+\varepsilon}, \ \ \ \ \ \sum\limits_{d\le D} \frac{d}{(d^4)_0} \ll D^{\varepsilon},\ \ \ \ \ \\
\sum\limits_{d\le D} \frac{1}{{d'}^{1/2}}\ll D^{1/2+\varepsilon}, \ \ \ \ \ &
\sum\limits_{d\le D} \frac{d^{1/3}}{d_0^{2/3}d'}\ll D^{\varepsilon},\ \ \ \ \ \sum\limits_{d\le D} \frac{d^{1/3}{d^*}}{d_0^{2/3}{d'}^3}\ll D^{\varepsilon}.
\end{split}
\end{equation}
\end{lemma}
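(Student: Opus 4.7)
The plan is to treat all six bounds in Lemma~\ref{f} uniformly via Rankin's trick applied to the multiplicative structure of the summands. Writing $d=\prod_p p^{a_p}$, one has $v_p(d')=\lceil a_p/2\rceil$, $v_p(d_0)=\lceil a_p/3\rceil$, $v_p((d^4)_0)=\lceil 4a_p/3\rceil$, and $v_p(d^*)=a_p\bmod 2$, so each summand is a multiplicative function $f_i(d)$. I would first tabulate the local factors $f_i(p^a)$ for $a=1,\ldots,6$ (which covers the joint period of $\lceil a/2\rceil$ and $\lceil a/3\rceil$ modulo $6$); a short computation gives, among other values,
\[
 f_1(p)=p^{-1/2},\ f_2(p)=p^{-1},\ f_3(p^a)\le p^{-1}\ (a\ge 1),\ f_4(p)=p^{-1/2},\ f_5(p)=p^{-4/3},\ f_6(p)=p^{-7/3},
\]
together with analogous expressions for $f_i(p^2),f_i(p^3),\ldots$.

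Next, for each $i$, I would apply Rankin's trick: for any $\alpha>0$ at which the Dirichlet series converges,
\[
 \sum_{d\le D}f_i(d)\le D^{\alpha}\sum_{d=1}^{\infty}\frac{f_i(d)}{d^{\alpha}}=D^{\alpha}\prod_p\left(1+\sum_{a\ge 1}\frac{f_i(p^a)}{p^{a\alpha}}\right).
\]
An inspection of the two leading terms $f_i(p)/p^{\alpha}$ and $f_i(p^2)/p^{2\alpha}$ shows that the critical exponents above which the Euler product converges are $1/2,\ 1/4,\ 0,\ 1/2,\ 0,\ 0$ respectively, matching the target exponents in the lemma. Taking $\alpha$ strictly above the critical value, the Euler product is at worst $\exp(O(1/\varepsilon))$ (coming from the logarithmic near-divergence of $\sum_p p^{-1-c\varepsilon}$), so after relabelling $\varepsilon$ one obtains the claimed bounds.

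As a cleaner alternative for estimate (4), one uses the unique decomposition $d=a^2b$ with $b$ squarefree, which gives $d'=ab$, hence
\[
 \sum_{d\le D}\frac{1}{{d'}^{1/2}}=\sum_{a\ge 1}\frac{1}{a^{1/2}}\sum_{\substack{b\le D/a^2\\ b\text{ squarefree}}}\frac{1}{b^{1/2}}\ll D^{1/2}\sum_{a\ge 1}\frac{1}{a^{3/2}}\ll D^{1/2},
\]
without any $\varepsilon$ loss. The only bookkeeping care is required for estimate (1): the values $f_1(p^a)$ grow slowly with $a$, but a case check for $a\bmod 6$ yields $f_1(p^a)\ll p^{a/12}$, which still guarantees geometric convergence of $\sum_a f_1(p^a)/p^{a\alpha}$ once $\alpha>1/12$, so in particular at $\alpha=1/2+\varepsilon$. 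Apart from this elementary case analysis, there is no analytic obstacle; the lemma is a routine exercise in multiplicative-function estimation.
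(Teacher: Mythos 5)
Your proof is correct and uses the same core technique as the paper: multiplicativity of the summands plus Rankin's trick, with the paper setting the Rankin exponent exactly at the target and absorbing the near-divergent Euler product into $D^\varepsilon$ via $\prod_{p\le D}(1-1/p)^{-C}$, while you shift the exponent by $\varepsilon$ so the product converges to $\exp(O(1/\varepsilon))$. Your systematic tabulation of the local factors is in fact a bit more careful than the paper's treatment, which works out only the first estimate and claims $f_1(p^r)\le 1$ for all $r\ge 2$ — a claim that already fails at $r=6$, where $f_1(p^6)=p^{1/2}$, though the Rankin weight rescues the argument exactly as your growth bound $f_1(p^a)\ll p^{a/12}$ makes explicit.
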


\begin{proof} To prove the first estimate in \eqref{f2},
we use the fact that the function
\[ f(d)=\frac{d{d^*}^{1/2}}{d_0^{1/2}{d'}^{3/2}} \]
is multiplicative, and, if $p$ is a prime, then
\[ f(p)=p^{-1/2} \ \mbox{ and }\ f(p^r)\le 1 \mbox{ if } r\ge 2. \]
It follows that
\begin{eqnarray} \label{mul}
\sum\limits_{d\le D} f(d)\le D^{1/2} \sum\limits_{d\le D} \frac{f(d)}{d^{1/2}} &\le&
D^{1/2}\prod_{p\le D}\left(\sum\limits_{r=0}^\infty f(p^r)p^{-r/2}\right) \le
D^{1/2}\prod_{p\le D}\left(1+\frac{C}{p}\right)\\ &\le&
D^{1/2}\prod_{p\le D}\left(1-\frac{1}{p}\right)^{-C}\nonumber
\end{eqnarray}
for some constant $C\ge 1$.
It is well-known that the product in the last line of \eqref{mul} is dominated by $\ll D^{\varepsilon}$ which completes the proof of the first estimate in \eqref{f2}. All other estimates in \eqref{f2} can be proved by a similar technique.
\end{proof}

\section{Removing RH for Symmetric Square $L$-functions}

The Riemann hypothesis for symmetric square $L$-functions enabled M. P. Young \cite{Young} to readily dispose of the contribution ${P}_2$ in \eqref{startpt} and infer
\begin{equation} \label{formD}
D ( E; \phi ) = \hat{\phi} (0) \frac{\log N}{\log X} + \frac{1}{2} \phi(0) - P_1 (E; \phi) + O \left( \frac{ \log \log |\Delta|}{\log X} \right)
\end{equation}
which is (7) in \cite{Young}.  In practice, what is actually needed is \eqref{startpt1} averaged over the family of elliptic curves under consideration.  Keeping this in mind, the dependency on RH for symmetric square $L$-functions can be removed as noted in \cite{Young}.  We give this computation here and dispose of the relevant contribution 
$$
\mathcal{P}_2( \mathcal{F}; \phi, w_X)=\sum_{E \in \mathcal{F}} \tilde{P_2}(E;\phi) w_X(E)
$$
unconditionally. 

\begin{lemma} \label{sym2rh}
Set $A=X^{1/3}$ and $B=X^{1/2}$.  We have
\begin{equation} \label{sym2rheq}
\mathcal{P}_2( \mathcal{F}; \phi, w_X) = \sum_{p>3} \sum_a \sum_b \lambda_{a,b} (p^2) \hat{\phi} \left( \frac{2 \log p}{\log X} \right) \frac{ 2 \log p}{p^2 \log X} w \left( \frac{a}{A}, \frac{b}{B} \right) \ll \frac{X^{5/6}}{\log X} \log \log X,
\end{equation}
provided that the support of $\hat{\phi}$ is in $(-1, 1)$.
\end{lemma}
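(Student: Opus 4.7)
The plan is to exchange the order of summation and then attack the inner sum over $(a,b)$ by Poisson summation modulo $p$. Writing
\[
\mathcal{P}_2 = \sum_{p>3} \hat{\phi}\!\left(\frac{2\log p}{\log X}\right) \frac{2\log p}{p^2\log X} \cdot S(p), \qquad S(p) := \sum_{a,b} \lambda_{a,b}(p^2) \, w\!\left(\frac{a}{A},\frac{b}{B}\right),
\]
only primes $p\le X^{1/2}$ contribute, because $\hat{\phi}$ is supported in $(-1,1)$. Since $(a,b)\mapsto\lambda_{a,b}(p^2)=\lambda_{a,b}(p)^2 - p$ depends only on $(a,b)$ modulo $p$, Lemma~\ref{poisuml} applied in each variable gives
\[
S(p) = \frac{AB}{p^2}\sum_{h,k\in\intz}\hat{w}\!\left(\frac{hA}{p},\frac{kB}{p}\right) C(h,k;p), \quad C(h,k;p) := \sum_{\alpha,\beta\bmod{p}}\lambda_{\alpha,\beta}(p^2) \, e\!\left(\frac{h\alpha+k\beta}{p}\right).
\]

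The main term comes from $(h,k) = (0,0)$. I would compute $C(0,0;p)$ directly: expanding $\lambda_{\alpha,\beta}(p)^2$ as a double Legendre symbol sum, swapping the $\beta$-sum inside, and invoking the elementary identity $\sum_{\beta\bmod{p}}\left(\frac{(\beta-u)(\beta-v)}{p}\right) = p-1$ if $u\equiv v\pmod{p}$ and $-1$ otherwise, a brief case analysis (split by whether $x\equiv y$) yields $\sum_{\alpha,\beta\bmod{p}}\lambda_{\alpha,\beta}(p)^2 = p^2(p-1)$. Thus $C(0,0;p) = p^2(p-1) - p\cdot p^2 = -p^2$, so the $(h,k)=(0,0)$ contribution equals $-AB\hat{w}(0,0)\sum_{p>3}\hat{\phi}(2\log p/\log X)\cdot 2\log p/(p^2\log X)\ll AB/\log X = X^{5/6}/\log X$, since $\sum_p(\log p)/p^2$ converges. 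This alone is within the claimed bound.

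For the error terms $(h,k)\ne(0,0)$, the objective is the estimate $|C(h,k;p)| \ll p^{5/2+\varepsilon}$ via Weil-type bounds. Performing the substitution $\beta = b - x^3 - \alpha x$ reduces the inner $\beta$-sum to $\sum_b \left(\frac{b(b+c)}{p}\right) e(kb/p)$ with $c = (y-x)(x^2+xy+y^2+\alpha)$, which is $O(\sqrt{p})$ by Weil (equivalently, by completing the square and reducing to a Sali\'e-type sum) in the generic case $c,k\not\equiv 0\pmod{p}$, and evaluates explicitly in the degenerate subcases. The subsequent $\alpha$-sum, after noting that $c$ is linear in $\alpha$, reduces to a Gauss sum in a transformed variable, producing an outer factor of size $p$; what remains is a double sum over $(x,y)\in\fief_p^2$ of the form $\sum_{x,y}\left(\frac{(\ell-x)(\ell-y)}{p}\right) e(F(x,y)/p)$ for an explicit polynomial $F$, where $\ell = h\bar{k}\bmod{p}$. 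Fixing $x$ and applying the Weil bound in $y$ saves a factor of $\sqrt{p}$; summing trivially in $x$ gives $O(p^{3/2})$ for the double sum, hence $O(p^{5/2+\varepsilon})$ for $C(h,k;p)$. Contributions from the diagonal $x=y$ and from the exceptional frequencies with $p\mid h,\,p\mid k$ are handled separately and are of lower order.

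To conclude, the rapid decay of $\hat{w}$ restricts the effective range of the Poisson expansion to $|h|\ll p/A$, $|k|\ll p/B$, so the contribution of the non-zero frequencies to $S(p)$ is $\ll AB\, p^{1/2+\varepsilon}(1+p/A)(1+p/B)$. Splitting the $p$-sum at $p = A$: for $p\le A = X^{1/3}$ the contribution is $\ll AB/\log X = X^{5/6}/\log X$, and for $A < p\le B = X^{1/2}$ it is $\ll B \cdot B^{1/2+\varepsilon}/\log X = X^{3/4+\varepsilon}/\log X$; both are safely within the claimed bound, with the $\log\log X$ factor providing comfortable slack. The main obstacle is establishing $|C(h,k;p)|\ll p^{5/2+\varepsilon}$: the trivial Hasse bound gives only $p^3$, so extracting genuine square-root cancellation in at least one of the variables $x,y$ via Weil, while carefully navigating the degenerate parameter configurations ($k\equiv 0$, the diagonal $x=y$, or $p\mid(h,k)$ simultaneously), is the technical heart of the argument.
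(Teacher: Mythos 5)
Your proposal is essentially correct and closely parallels the paper's argument in its overall shape: swap the sums, apply Poisson summation modulo $p$, compute the $(h,k)=(0,0)$ term exactly (and indeed $C(0,0;p)=-p^2$, matching the paper's cancellation between the Poisson diagonal and the subtracted $-p\sum w$ term), and then extract square-root cancellation from the complete sum $C(h,k;p)$ for the nonzero frequencies. The bound $|C(h,k;p)|\ll p^{5/2}$ is also the target the paper hits, and your tally of the contributions in the two ranges $p\le A$ and $A<p\le B$ is correct and in fact a little sharper than the stated bound.

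Where you diverge from the paper is in how the nonzero frequencies are handled. The paper uses the structural shortcut that, since $\hat\phi$ has support in $(-1,1)$, only $p\ll X^{1/2-\varepsilon}$ contributes; combined with $B=X^{1/2}$, the decay of $\hat w$ forces $k\ll X^{\varepsilon/2}p/B\ll X^{-\varepsilon/2}$, hence $k=0$. The remaining sum $C(h,0;p)$ is then evaluated by a short chain of explicit changes of variables that collapses the $\alpha,\beta$ sums into a quadratic Gauss sum squared (modulus exactly $p$), leaving $\sum_{x_1\ne x_2}e(-h(x_1^2+x_1x_2+x_2^2)/p)$, bounded by $p^{3/2}$. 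You instead assert a uniform Weil-type estimate $|C(h,k;p)|\ll p^{5/2+\varepsilon}$ valid for all $(h,k)\ne(0,0)$. This is a strictly stronger statement than what is needed, and it is provable along the lines you sketch, but your narrative conflates two incompatible moves: you first claim the inner $\beta$-sum $\sum_b(\tfrac{b(b+c)}{p})e(kb/p)$ is $O(\sqrt p)$ by Weil, and then in the next breath say the $\alpha$-sum ``reduces to a Gauss sum ... producing an outer factor of size $p$.'' These cannot both be used at once -- once you bound the $\beta$-sum in absolute value you have destroyed the $\alpha$-dependence that makes the Gauss-sum evaluation possible; what one actually does is treat the $\alpha$- and $\beta$-sums \emph{jointly}, substitute $\gamma=\beta+c'$, and obtain a product $\tau_1(\mu)\tau_1(k-\mu)$ of two genuine Gauss sums of modulus $\sqrt p$ each, which is where the factor $p$ comes from. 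With that repaired, the remaining double sum $\sum_{x\ne y}(\tfrac{(\ell-x)(\ell-y)}{p})e(F(x,y)/p)$ (with $\ell=h\bar k$ when $k\ne 0$, and a constant Legendre factor when $k=0$) is $\ll p^{3/2}$ by fixing $x$ and applying the Weil bound in $y$, exactly as you say. So your route works, but it is doing extra work: because of the structural $k=0$ reduction the paper exploits, the general $(h,k)$ bound is never actually invoked with $k\ne 0$, and the paper's explicit Gauss-sum computation in the $k=0$ case is both more elementary (no appeal to Weil for mixed character/exponential sums) and completely self-contained.
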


\begin{proof}
We recall that, by definition in \eqref{atsquares},
\[ \lambda_{a,b} (p^2) = \lambda_{a,b}(p)^2 -p. \]
Hence we get
\begin{equation} \label{sym2rh1}
\sum_a \sum_b \lambda_{a,b} (p^2) w\left( \frac{a}{A}, \frac{b}{B} \right) = \sum_a \sum_b \lambda_{a,b} (p)^2 w\left( \frac{a}{A}, \frac{b}{B} \right) - p \sum_a \sum_b w\left( \frac{a}{A}, \frac{b}{B} \right) .
\end{equation}

Applying Poisson summation modulo $p$, Lemma~\ref{poisuml}, the first term on the right-hand side of \eqref{sym2rh1} becomes
\begin{equation} \label{sym2rh2}
 \frac{AB}{p^2} \sum_h \sum_k \mathop{\sum \sum}_{\substack{\alpha \bmod{p} \\ \beta \bmod{p}}} \lambda_{\alpha, \beta} (p)^2 e \left( \frac{\alpha h + \beta k}{p} \right) \hat{w} \left( \frac{hA}{p}, \frac{kB}{p} \right).
\end{equation}

Recall that
\[ \lambda_{a,b} (p) = - \sum_{x \bmod{p}} \left( \frac{x^3+ax+b}{p} \right). \]
By the above identity, the terms corresponding to $h=k=0$ in \eqref{sym2rh2} are
\begin{equation} \label{sym2rh3}
\begin{split}
 \frac{AB}{p^2} & \hat{w} (0,0) \mathop{\sum \sum}_{\substack{\alpha \bmod{p} \\ \beta \bmod{p}}} \left( \sum_{x \bmod{p}} \left( \frac{x^3+\alpha x+\beta}{p} \right) \right)^2  \\
& = AB\hat{w}(0,0) \left( p + \frac{1}{p^2} \mathop{\sum_{x_1} \sum_{x_2}}_{x_1 \neq x_2} \mathop{\sum \sum}_{\substack{\alpha \bmod{p} \\ \beta \bmod{p}}} \left( \frac{x_1^3+\alpha x_1+ \beta}{p} \right) \left( \frac{x_2^3+ \alpha x_2+ \beta}{p} \right) +O(1) \right).
\end{split}
\end{equation}
With the change of variables $\beta \to \beta - x_1^3-\alpha x_1$, the double sum over $\alpha$ and $\beta$ on the right-hand side of \eqref{sym2rh3} becomes
\begin{equation} \label{sym2rh4}
\sum_{\beta \bmod{p}} \left( \frac{\beta}{p} \right) \sum_{\alpha \bmod{p}} \left( \frac{\beta+(x_2^3-x_1^3)+\alpha (x_2-x_1)}{p} \right) .
\end{equation}
Now it is easy to observe if $x_1 \neq x_2$ and $\alpha$ runs over the residue classes modulo $p$, so does
\[ \beta+(x_2^3-x_1^3)+\alpha (x_2-x_1). \]
Therefore, the inner sum of the above is zero by the orthogonality of characters.  From this, we deduce that \eqref{sym2rh2} is
\begin{equation} \label{sym2rh5}
AB \left( \hat{w}(0,0)p +  \frac{1}{p^2} \mathop{\sum_h \sum_k}_{(h,k) \neq (0,0)} \mathop{\sum \sum}_{\substack{\alpha \bmod{p} \\ \beta \bmod{p}}} \lambda_{\alpha, \beta} (p)^2 e \left( \frac{\alpha h + \beta k}{p} \right) \hat{w} \left( \frac{hA}{p}, \frac{kB}{p} \right) \right).
\end{equation}

We are now led to consider
\begin{equation} \label{sym2rh6}
\sum_{\alpha \bmod{p}} \ \sum_{\beta \bmod{p}} \ \left( \sum_{x \bmod{p}} \left( \frac{x^3+\alpha x+\beta}{p} \right) \right)^2 e \left( \frac{\alpha h + \beta k}{p} \right) .
\end{equation}
Due to the presence of
\[ \hat{\phi} \left( \frac{2 \log p}{\log X} \right) \]
on the left-hand side of \eqref{sym2rheq}, it suffices to consider the case
\[ p \ll X^{1/2-\varepsilon}. \]
Moreover, the presence of
\[ \hat{w} \left( \frac{hA}{p}, \frac{kB}{p} \right) \]
in \eqref{sym2rh6} gives that the only contribution that is not negligible is when
\[ k \ll X^{\varepsilon/2} \frac{p}{B} \ll X^{-\varepsilon/2}. \]
Therefore, we only need to consider the contribution of $k=0$.  The corresponding terms in \eqref{sym2rh5} are
\begin{equation} \label{sym2rh7}
\frac{AB}{p^2} \sum_{h\neq 0} \hat{w} \left( \frac{hA}{p}, 0 \right) \left( \mathop{\sum_{x_1} \sum_{x_2}}_{x_1 \neq x_2} \sum_{\alpha} \sum_{\beta} \left( \frac{x_1^3+\alpha x_1+\beta}{p} \right) \left( \frac{x_2^3+\alpha x_2+\beta}{p} \right) e \left( \frac{\alpha h}{p} \right) +O(p^2) \right).
\end{equation}
We now make a number of changes of variables.  First making the change of variables
\[ \beta \to \beta - \alpha x_1 -x_1^3, \]
we get that
\begin{equation*}
\begin{split}
 \mathop{\sum_{x_1} \sum_{x_2}}_{x_1 \neq x_2}& \sum_{\alpha} \sum_{\beta} \left( \frac{x_1^3+\alpha x_1+\beta}{p} \right) \left( \frac{x_2^3+\alpha x_2+\beta}{p} \right) e \left( \frac{\alpha h}{p} \right) \\
& = \mathop{\sum_{x_1} \sum_{x_2}}_{x_1 \neq x_2} \sum_{\beta} \left( \frac{\beta}{p} \right) \sum_{\alpha} \left( \frac{(x_2^3-x_1^3)+\alpha (x_2-x_1) + \beta}{p} \right) e \left( \frac{\alpha h}{p} \right).
\end{split}
\end{equation*}
Now re-writing $\alpha$ as $(\alpha-\beta)\overline{(x_2-x_1)}$, noting that $x_2-x_1$ is prime to $p$ since $x_1 \neq x_2$, the above becomes
\[ \mathop{\sum_{x_1} \sum_{x_2}}_{x_1 \neq x_2} \sum_{\beta} \left( \frac{\beta}{p} \right) e \left( - \frac{\beta h \overline{(x_2-x_1)}}{p} \right) \sum_{\alpha} \left( \frac{(x_2^3-x_1^3)+\alpha }{p} \right) e \left( \frac{\alpha h\overline{(x_2-x_1)}}{p} \right).  \]
Now changing $\beta$ into $\beta(x_2-x_1)$ and $\alpha$ into $\alpha(x_2-x_1)$, this last expression becomes
\[ \mathop{\sum_{x_1} \sum_{x_2}}_{x_1 \neq x_2} \sum_{\beta} \left( \frac{\beta}{p} \right) e \left( - \frac{\beta h}{p} \right) \sum_{\alpha} \left( \frac{(x_2^2+x_1^2+x_1x_2)+\alpha }{p} \right) e \left( \frac{\alpha h}{p} \right).  \]
Finally changing $\alpha$ into $\alpha-(x_2^2+x_1^2+x_1x_2)$, we have
\begin{equation} \label{sym2rh8}
\mathop{\sum_{x_1} \sum_{x_2}}_{x_1 \neq x_2} e \left( - \frac{h(x_2^2+x_1^2+x_1x_2)}{p} \right) \left( \sum_{\beta} \left( \frac{\beta}{p} \right) e \left( - \frac{\beta h}{p} \right) \right)^2.
\end{equation}
The square of the sum in \eqref{sym2rh8} is that of a Gauss sum which, by (2) of Lemma~\ref{gauss}, is exactly $p$ if $p$ is congruent to 1 modulo 4 and $-p$ if $p$ is congruent to $-1$ modulo 4.  We only give the computations for the positive cases in what follows as the other cases differ only by a negative sign.  In these cases, \eqref{sym2rh8} is
\begin{equation} \label{sym2rh9}
\begin{split}
 p \mathop{\sum_{x_1} \sum_{x_2}}_{x_1 \neq x_2} &e \left( - \frac{h(x_2^2+x_1^2+x_1x_2)}{p} \right) \\
 & = p \left( \sum_{x_1} e \left( - \frac{hx_1^2}{p} \right) \sum_{x_2} e \left( - \frac{h(x_2^2+x_1x_2)}{p} \right)- \sum_x e \left( - \frac{3hx^2}{p} \right)  \right)
\end{split}
\end{equation}
On the right-hand side of the above, the sums over $x$ and $x_2$ are both quadratic Gauss sums which can again be evaluated.  But we only need to know that they are both of modulus not exceeding $2\sqrt{p}$ by the virtue of (3) of Lemma~\ref{gauss}.  Then summing trivially over $x_1$, we get that \eqref{sym2rh9} is
\[ \ll p^{5/2}. \]
Now, note that the first term in \eqref{sym2rh5} and the second term on the right-hand side of \eqref{sym2rh1} differ only by a negligible amount.  Combining everything, summing over $h$ and $p$ trivially and noting that the relevant range for $h$ is $h \ll p^{1+\varepsilon}/A$, we arrive at our desired result.
\end{proof}

\section{Transformation of $Q(d,k,\chi)$}

In the rest of the paper, it remains to show that
\[ S(H,K,P) \ll X^{-\varepsilon} \]
if
\[ H \ll (P/A)^{1+\varepsilon}, \; K \ll (P/B)^{1+\varepsilon} \; \mbox{and} \; P \ll X^{7/10-\varepsilon}. \]
To this end, we first estimate the expression $Q(d,k,\chi)$ in \eqref{transS1} in terms of moments and zeros of
Dirichlet $L$-functions.  To do so, we first transform general sums of the form
\begin{equation} \label{general}
G=\sum\limits_{u\sim U}\sum\limits_{v\sim V} \chi(u)\psi(v)\Lambda(v)e\left(\frac{u^c}{vq}\right)F(u,v),
\end{equation}
where $\chi$ and $\psi$ are Dirichlet characters to the moduli $l_1$ and $l_2$, respectively,
$\Lambda$ is the von Mangoldt function, $q$ is a non-zero rational number, $c$ is a positive integer, and $F(u,v)$ is a smooth function satisfying
\[ F^{\alpha_1,\alpha_2}(u,v)u^{\alpha_1}v^{\alpha_2}\ll C(\alpha_1,\alpha_2)\left(1+\frac{|u|}{U}\right)^{-2}\left(1+\frac{|v|}{V}\right)^{-2} \]
for any $\alpha_1,\alpha_2\ge 0$ with the superscript on $F$ denoting partial differentiation. \newline

Let ${\mathcal P}$ be an infinite path in the complex plane satisfying the following properties:\medskip

(I) ${\mathcal P}$ is a ``zigzag'' path consisting of vertical and horizontal line segments lying in the strip $1/2+\varepsilon\le \Re s \le 1/2+2\varepsilon$. More precisely, there exists an unbounded sequence $(t_n)_{n\in \mathbbm{Z}}$ with $...<t_{-2}<t_{-1}<t_{0}<t_{1}<t_2<...$ and a sequence
$(\sigma_{n})_{n\in \mathbbm{Z}}$ with $1/2+\varepsilon \le \sigma_{n}\le 1/2+2\varepsilon$ and
$\sigma_{n}\not=\sigma_{n+1}$ such that the vertical line segments have end-points $\sigma_n+it_n$ and $\sigma_n+it_{n+1}$, and the horizontal line segments have end-points $\sigma_n+it_{n+1}$ and $\sigma_{n+1}+it_{n+1}$.
\medskip

(II) Any zero $\rho$ of $L(s,\psi)$ has distance $\gg 1/\log(l_2(2+|\Im \rho|))$ to ${\mathcal P}$.\medskip

(III) The part of ${\mathcal P}$ lying in the rectangle with vertices $1/2+\varepsilon-iT$,
$1/2+2\varepsilon-iT$, $1/2+2\varepsilon+iT$ and $1/2+\varepsilon+iT$ has length $\ll T$.\medskip\\
The existence of a path ${\mathcal{P}}$ satisfying the above properties (I), (II) and (III) follows from Lemma \ref{zeronumb}.  Moreover, by Lemma \ref{logdiv} we have that
\[ \frac{L'}{L}(s,\psi)\ll \log^2(2+|\Im s|)l_2^{\varepsilon} \]
for any $s$ lying on ${\mathcal{P}}$. \newline

Now, if both $\chi$ and $\psi$ are non-principal, then, using Mellin transform and the residue theorem from complex analysis, (\ref{general}) can be written in the form
\begin{equation} \label{T1}
G=G_1+G_2,
\end{equation}
with
$$
G_1:=-\frac{1}{(2\pi i)^2} \int\limits_{\Re s_1=1/2} \int\limits_{{\mathcal{P}}} L(s_1,\chi)\frac{L'}{L}(s_2,\psi)H(s_1,s_2){\rm d} s_2{\rm d} s_1 \; \mbox{and} \;
G_2:=\frac{1}{2\pi i}\int\limits_{\Re s=1/2} L(s,\chi) \sum\limits_{\rho\in {\mathcal{N}}_{\mathcal{P}}(\psi)} H(s,\rho) {\rm d} s,
$$
where
${\mathcal{N}}_{\mathcal{P}}(\psi)$ denotes the set of zeros of $L(s,\psi)$ to the right of the path ${\mathcal P}$, and $H(s_1,s_2)$ is defined as in \eqref{Hdef}. If $\psi$ is principal and $\chi$ is not, we have an extra contribution of
\begin{equation} \label{EC}
G_3:=\frac{1}{2\pi i}\int\limits_{\Re s=1/2} L(s,\chi)H(s,1){\rm d}s
\end{equation}
coming from the pole of $L'/L(s,\psi)$ at $s=1$.\newline

In the following, we estimate the expressions $G_1,G_2,G_3$.
Using Lemma \ref{Hest} and Lemma \ref{Lest}, we obtain
\begin{equation}
G_3 \ll \tilde{l}_1^{3/16}U^{1/2}V\left(1+\frac{U^c}{V|q|}\right)^{1/2}(l_1U)^{\varepsilon},
\end{equation}
where $\tilde{l}_1$ is the conductor of $\chi$.\newline

Taking the properties of ${\mathcal P}$ into account and using Lemma \ref{Hest}, the Cauchy-Schwarz inequality and integration by parts, we obtain
\[ G_1 \ll U^{1/2}V^{1/2}\left(1+\frac{U^c}{V|q|}\right)^{1/2}(l_2V)^{\varepsilon}
\left(\int\limits_{0}^{\infty} \frac{1}{(1+T)^{2+\varepsilon}} \int\limits_{-T}^T |L(1/2+it,\chi)|^2 {\rm d} t {\rm d} T
\right)^{1/2}. \]

Using partial summation, we obtain
\begin{equation} \label{T0}
G_2\ll U^{1/2}\left(1+\frac{U^c}{V|q|}\right)^{1/2}\log V \cdot \int\limits_{1/2+\varepsilon}^1 V^{\sigma}
\int\limits_{-\infty}^{\infty} \frac{|L(1/2+it, \chi)|}{(1+|t|)^{1+\varepsilon}} \sum\limits_{\rho\in {\mathcal N}(\psi,\sigma)} \frac{1}{(1+|t/c+\gamma|)^2}{\rm d} t{\rm d}\sigma.
\end{equation}
Employing H\"older's inequality, we deduce that the inner integral in (\ref{T0}) is bounded by
\begin{equation} \label{T01}
\ll \left(\int\limits_{-\infty}^{\infty} \frac{|L(1/2+it, \chi)|^4}{(1+|t|)^{1+\varepsilon}}
{\rm d} t\right)^{1/4} \left(\int\limits_{-\infty}^{\infty} \frac{1}{(1+|t|)^{1+\varepsilon}}\left(\sum\limits_{\rho\in {\mathcal N}(\psi,\sigma)} \frac{1}{(1+|t/c+\gamma|)^2}\right)^{4/3}{\rm d} t\right)^{3/4}.
\end{equation}
By Lemma \ref{zeronumb}, it follows that
\[ \sum\limits_{\rho\in {\mathcal N}(\psi,\sigma)} \frac{1}{(1+|t/c+\gamma|)^2} \ll
(l_2(1+|t|))^{\varepsilon}. \]
This implies that
\begin{equation} \label{T02}
\begin{split}
& \int\limits_{-\infty}^{\infty} \frac{1}{(1+|t|)^{1+\varepsilon}}\left(\sum\limits_{\rho\in {\mathcal N}(\psi,\sigma)} \frac{1}{(1+|t/c+\gamma|)^2}\right)^{4/3}{\rm d} t \ll l_2^{\varepsilon}\int\limits_{-\infty}^{\infty} \frac{1}{(1+|t|)^{1+\varepsilon/2}} \sum\limits_{\rho\in {\mathcal N}(\psi,\sigma)} \frac{1}{(1+|t/c+\gamma|)^2}{\rm d} t \\
&= l_2^{\varepsilon}\sum\limits_{\rho\in {\mathcal N}(\psi,\sigma)} \int\limits_{-\infty}^{\infty} \frac{{\rm d} t}{(1+|t|)^{1+\varepsilon/2}(1+|t/c+\gamma|)^2} \ll l_2^{\varepsilon}\sum\limits_{\rho\in {\mathcal N}(\psi,\sigma)} \frac{1}{(1+|\gamma|)^{1+\varepsilon/4}}.
\end{split}
\end{equation}
Combining (\ref{T0}), (\ref{T01}) and \eqref{T02}, and using partial summation and integration by parts, we obtain
\begin{equation} \label{T12}
\begin{split}
G_2\ll U^{1/2}\left(1+\frac{U^c}{V|q|}\right)^{1/2} & (l_2V)^{\varepsilon}
\int\limits_{1/2+\varepsilon}^{1} V^{\sigma} \left(
\int\limits_{0}^{\infty} \frac{1}{(1+|T|)^{2+\varepsilon}}
\int\limits_{-T}^{T} |L(1/2+it,\chi)|^{4} {\rm d} t {\rm d}
 T\right)^{1/4} \\
& \times \left(\int\limits_{0}^{\infty}\frac{N(\psi,T,\sigma)}{(1+|T|)^{2+\varepsilon}}{\rm d} T\right)^{3/4}{\rm d} \sigma.
\end{split}
\end{equation}

Let $\tilde Q(d,k,\chi)$ be the term $Q(d,k,\chi)$ with the summation over $p$ be extended to prime powers $p^m\sim P$. As remarked on the top of page 220 in \cite{Young}, trivial estimations show that the difference $|Q(d,k,\chi)-\tilde Q(d,k,\chi)|$ is negligible, thus we may replace $Q(d,k,\chi)$ by $\tilde Q(d,k,\chi)$. \newline

As shown on page 219 in \cite{Young}, $P^{3/2}(\log X)\tilde Q(d,k,\chi)$ is a term of the form \eqref{general}, where
\[
F(u,v)=\left(\frac{v}{P}\right)^{-3/2} g \left( \frac{ud_0}{H} \right) g \left( \frac{k}{K} \right) g \left( \frac{v}{P} \right) \hat{w}\left(\frac{ud_0A}{v},
\frac{kB}{v}\right)\hat{\phi}\left(\frac{\log v}{\log X}\right),
\]
the character $\chi$ in (\ref{general}) is replaced by $\overline{\chi}^3$ (with modulus
$l_1=k^2/d$), the character $\psi$ in (\ref{general}) is $\chi\psi_4(k/\cdot)$ (with modulus
$l_2=$lcm$(4,k^*,k^2/d)$, where $k^*$ is the conductor of $(k/\cdot)$), $q=-k^2/d_0^3$,
$U=\min\{H/d_0,P/(d_0A)\}$, $V=P$, and $c=3$. \newline

Now, by the above considerations, when $k\sim K$ and $\overline{\chi}^3$ is a non-trivial character, we obtain
\[
 \tilde Q(d,k,\chi)\ll Q_1(d,k,\chi)+Q_2(d,k,\chi)+Q_3(d,k,\chi),
\]
where
\[
 Q_1(d,k,\chi)=
P^{-1}\left(\frac{H}{d_0}\right)^{1/2}\left(1+\frac{H^{3/2}}{P^{1/2}K}\right)X^{\varepsilon}
\left(\int\limits_{0}^{\infty} \frac{1}{(1+T)^{2+\varepsilon}} \int\limits_{-T}^T |L(1/2+it,\overline{\chi}^3)|^2 {\rm d} t {\rm d} T
\right)^{1/2},
\]
\begin{equation*}
\begin{split}
 Q_2(d,k,\chi)
= P^{-3/2}
\left(\frac{H}{d_0}\right)^{1/2}\left(1+\frac{H^{3/2}}{P^{1/2}K}\right)X^{\varepsilon} \int\limits_{1/2+\varepsilon}^{1} P^{\sigma} & \left(
\int\limits_{0}^{\infty} \frac{1}{(1+|T|)^{2+\varepsilon}}
\int\limits_{-T}^{T} |L(1/2+it,\overline{\chi}^3)|^{4} {\rm d} t {\rm d}
T\right)^{1/4} \\
 & \times
 \left(\int\limits_{0}^{\infty}\frac{N(\chi\psi_4(k/\cdot),T,\sigma)}{(1+|T|)^{2+\varepsilon}}{\rm d} T\right)^{3/4}{\rm d} \sigma.
\end{split}
\end{equation*}
and $Q_3(d,k, \chi)=0$ if $\chi\psi_4(k/\cdot)$ is non-trivial and
\begin{equation} \label{Q3}
Q_3(d,k,\chi)= P^{-1/2}l^{3/16}\left(\frac{H}{d_0}\right)^{1/2}\left(1+\frac{H^{3/2}}{P^{1/2}K}\right)X^{\varepsilon}
\end{equation}
if $\chi\psi_4(k/\cdot)$ is trivial. Here $l$ denotes the conductor of $\overline{\chi}^3$.

\section{Splitting of $S(H,K,P)$ and estimation of some partial terms}
Let $l_1$ be the modulus of $\overline{\chi}^3$, $l_2$ be the modulus of $\psi=\chi\psi_4(k/\cdot)$, $\chi_0$ be the trivial character modulo $l_1$, and $\psi_0$ be the trivial character modulo $l_2$. \newline

As in \cite{Young}, we split $S(H,K,P)$ given by \eqref{transS1} into four terms $S_1$, $S_2$, $S_3$ and $S_4$,
where $S_1$ is the contribution of all characters $\chi$ such that
$\overline{\chi}^3\not=\chi_0$ and $\psi\not=\psi_0$, $S_2$ is the contribution of all $\chi$ such that $\overline{\chi}^3=\chi_0$ and $\psi\not=\psi_0$,
$S_3$ is the contribution of all $\chi$ such that
$\overline{\chi}^3=\chi_0$ and $\psi=\psi_0$, and $S_4$ is the contribution of all $\chi$ such that $\overline{\chi}^3\not=\chi_0$ and $\psi=\psi_0$. \newline

By the results of the previous section, we have
\begin{equation} \label{S11}
S_1\ll T_1+T_2+E,
\end{equation}
where
\[ T_i:=\sum\limits_{k\sim K} \sum\limits_{d|k^2} \frac{1}{\varphi(k^2/d)}
\sum\limits_{\chi\ \mbox{\rm \scriptsize mod } k^2/d} |\tau(\chi)| Q_i(d,k,\chi) \]
for $i=1,2$, and $E$ is a negligible error term coming from prime powers $p^m$ with
$m\ge 2$. \newline

Using the Cauchy-Schwarz inequality, Lemma \ref{second}, Lemma \ref{gauss} and Lemma \ref{charnumb}, we obtain
\[ T_1 \ll P^{-1}H^{1/2}\left(1+\frac{H^{3/2}}{P^{1/2}K}\right)X^{\varepsilon}\sum\limits_{k\sim K} k \sum\limits_{d|k^2} \frac{1}{(dd_0)^{1/2}} \ll P^{-1}H^{1/2}K^2\left(1+\frac{H^{3/2}}{P^{1/2}K}\right)X^{\varepsilon}. \]
Now a simple calculation using $H\ll (P/A)^{1+\varepsilon}$ and $K\ll (P/B)^{1+\varepsilon}$ shows that the desired estimate $T_1\ll X^{-\varepsilon}$ holds if $P\ll X^{7/9-\varepsilon}$. \newline

Using H\"older's inequality and again Lemma \ref{second}, Lemma \ref{gauss} and Lemma \ref{charnumb}, we obtain
\begin{equation} \label{TT2}
\begin{split}
T_2 \ll
 P^{-3/2}&H^{1/2}\left(1+\frac{H^{3/2}}{P^{1/2}K}\right)K^{-1/4}X^{\varepsilon} \\
& \times \int\limits_{1/2+\varepsilon}^{1} P^{\sigma} \left(\int\limits_{0}^{\infty}\frac{1}{(1+|T|)^{2+\varepsilon}}
\sum\limits_{k\sim K} \sum\limits_{d|k^2} \frac{d^{1/3}}{d_0^{2/3}}
 \sum\limits_{\chi \ \mbox{\rm \scriptsize mod } k^2/d}
 N(\chi\psi_4(k/\cdot),T,\sigma){\rm d} T\right)^{3/4}{\rm d}
 \sigma.
\end{split}
\end{equation}
We postpone the rather difficult estimation of this expression to section 11 and provide the required tools in sections 9 and 10. \newline

To estimate $S_4$, we consider two cases. When $K\ll P^{1/2}H^{-1/2}X^{-4\varepsilon}$,
we use the unconditional bound
\begin{equation} \label{MS4}
S_4\ll\frac{H^{1/2}K^{1/2}}{P^{1/2}}X^{\varepsilon}+\frac{H^{1/4}K^{1/2}}{P^{1/4}}
X^{\varepsilon}
\end{equation}
on page 222 in \cite{Young}. In this case, we find that the right-hand side of (\ref{MS4}) is
$\ll X^{-\varepsilon}$. When $K\gg P^{1/2}H^{-1/2}X^{-4\varepsilon}$, we estimate $S_4$ by
\[ S_4 \ll T_1+T_2+T_3+E, \]
where $T_1,T_2$ are defined as previously, $E$ is a negligible error term like in \eqref{S11}, and
\[ T_3:=\sum\limits_{k\sim K} \sum\limits_{d|k^2} \frac{1}{\varphi(k^2/d)}
\sum\limits_{\substack{\chi\ \mbox{\rm \scriptsize mod } k^2/d\\ \psi=\psi_0}} |\tau(\chi)| Q_3(d,k,\chi), \]
where $Q_3(d,k,\chi)$ is defined as in \eqref{Q3}. As shown above, $T_1\ll X^{-\varepsilon}$ holds if $P\ll X^{7/9-\varepsilon}$. The estimation of $T_2$ is postponed to section 11. For the term $T_3$, we find
$$
T_3\ll P^{-1/2}H^{1/2}X^{\varepsilon}\left(1+\frac{H^{3/2}}{P^{1/2}K}\right)\sum_{k\sim K}
\sum\limits_{d|k^2} \frac{1}{d_0^{1/2}\varphi(k^2/d)} \sum\limits_{\substack{\chi\ \mbox{\scriptsize\rm mod } k^2/d\\ \psi=\psi_0}} |\tau(\chi)|l^{3/16},
$$
where $l$ is the conductor of $\overline{\chi}^3$.
We apply Lemma \ref{gauss} to bound the Gauss sum $\tau(\chi)$ by $\ll k/\sqrt{d}$.
We further observe, as in \cite{Young}, that the character $\chi$ has conductor $k^*$ equal to the square-free part of $k$ (up to a factor 2 or 4) if $\chi\psi_4(k/\cdot)$ is trivial. This implies that $k^*|k^2d^{-1}$
and hence $d|k^2(k^*)^{-1}$. It also implies that the conductor of $\overline{\chi}^3$ does not exceed $4k$. Combining everything, we obtain
$$
T_3\ll P^{-1/2}H^{1/2}K^{3/16}X^{\varepsilon}\left(1+\frac{H^{3/2}}{P^{1/2}K}\right)\sum_{k\sim K}
\frac{1}{k} \sum\limits_{d|k^2(k^*)^{-1}} \frac{d^{1/2}}{d_0^{1/2}}.
$$
The expression on the right-hand side coincides, up to an extra factor of size $K^{3/16}$, with the bound for $S_4$ in \cite{Young}. By a similar computation as in \cite{Young}, this expression can be estimated by
\begin{equation} \label{T3est}
\ll P^{-1/2}H^{1/2}K^{3/16}X^{\varepsilon}\left(1+\frac{H^{3/2}}{P^{1/2}K}\right).
\end{equation}
We recall that we have assumed that $K\gg P^{1/2}H^{-1/2}X^{-4\varepsilon}$.
Using this and
$H\ll (P/A)^{1+\varepsilon}$, we find that the term in \eqref{T3est} and hence $T_3$ is bounded by $\ll X^{-\varepsilon}$ if $P\ll X^{77/96-\varepsilon}$, as desired.\newline

It remains to estimate $S_2+S_3$ and $T_2$ which is the content of the remainder of this paper.

\section{Estimation of $S_2+S_3$}
Now we bound
\[ S_2+S_3=\sum\limits_{k\sim K} \sum\limits_{d|k^2} \frac{1}{\phi(k^2/d)}
\sum\limits_{\substack{\chi\ \mbox{\scriptsize\rm mod } k^2/d\\ \overline{\chi}^3=\chi_0}}
\tau(\chi)\overline{\chi}(d_0^3/d)Q(d,k,\chi), \]
the contribution of characters $\chi$ such that $\overline{\chi}^3$ is trivial. Let $D$ be a real number with $1\le D\le (2K)^2$ which we specify later.
Let $S^{\sharp}(D)$ be the contribution of divisors $d\le D$ to $S_2+S_3$, and let $S^{\flat}(D)$ be the contribution of divisors $d>D$ to $S_2+S_3$. We first bound $S^{\sharp}(D)$. We rewrite this term in the form
\[ S^{\sharp}(D)=\sum\limits_{d\le D} \sum\limits_{k_1\sim K/d'} \frac{1}{\phi(k^2/d)}
\sum\limits_{\substack{\chi\ \mbox{\scriptsize\rm mod } k_1^2d^*\\ \overline{\chi}^3=\chi_0}}
\tau(\chi)\overline{\chi}(d_0^3/d)Q(d,k,\chi), \]
where $k=k_1d'$. Second, we remove the weight $U(h,d_0,k,p)$ by partial summation. Furthermore, we observe that by Lemma \ref{ord3}, the conductor of $\chi$ divides $3k_1d^*$. Hence, by Lemma \ref{gauss}, we can bound the Gauss sum by $\tau(\chi)\ll \sqrt{k_1d^*}$. Therefore the desired estimate $S^{\sharp}(D)\ll X^{-\varepsilon}$ holds if
\begin{equation} \label{aimS2}
P^{-3/2}K^{-2}\sum\limits_{d\le D} d\sqrt{d^*} \sum\limits_{k_1\sim K/d'}\
\sqrt{k_1} \sum\limits_{\substack{\chi\ \mbox{\scriptsize\rm mod } k_1^2d^*\\ \overline{\chi}^3=\chi_0}}\
\sum\limits_{p\sim P} \left\vert\sum\limits_{h\sim H/d_0} e\left(\frac{h^3d_0^3}{pk^2}\right)\chi_0(h)\right\vert \ll X^{-\varepsilon}
\end{equation}
holds. Now using Lemma \ref{charnumb} and \ref{expsum}, the left-hand side of \eqref{aimS2} is bounded by
\begin{eqnarray} \label{aimS3}
&\ll& P^{-3/2}K^{-2}X^{\varepsilon}\sum\limits_{d\le D} d\sqrt{d^*} \sum\limits_{k_1\sim K/d'} \sqrt{k_1}
\left(\left(\frac{H}{d_0}\right)^{1/2}P+
\left(\frac{H}{d_0}\right)^{1/4}P^{5/4}K^{1/2}d_0^{-3/4}\right)\\
&\ll& P^{-1/2}H^{1/2}K^{-1/2}X^{\varepsilon}\sum\limits_{d\le D}
\frac{d{d^*}^{1/2}}{d_0^{1/2}{d'}^{3/2}}+P^{-1/4}H^{1/4}X^{\varepsilon}
\sum\limits_{d\le D}
\frac{d{d^*}^{1/2}}{d_0{d'}^{3/2}}\nonumber\\
&\ll& P^{-1/2}H^{1/2}K^{-1/2}D^{1/2}X^{2\varepsilon}+
P^{-1/4}H^{1/4}D^{1/4}X^{2\varepsilon},\nonumber
\end{eqnarray}
where the third line arrives by the virtue of Lemma \ref{f}. We now set
\begin{equation} \label{D}
D:=\left\{ \begin{array}{llll} (2K)^2 & \mbox{ if } K \le P^{1/2}H^{-1/2}X^{-6\varepsilon},\\ \\
PH^{-1}X^{-12\varepsilon} & \mbox{ if } K \ge P^{1/2}H^{-1/2}X^{-6\varepsilon}.
\end{array} \right.
\end{equation}
Then from \eqref{aimS3}, we deduce that
\begin{equation} \label{sharpS}
S^{\sharp}(D)\ll X^{-\varepsilon}.
\end{equation}

Next we bound $S^{\flat}(D)$. If $K \le P^{1/2}H^{-1/2}X^{-6\varepsilon}$, then this expression is empty. Thus we may assume that
\begin{equation} \label{K}
K \ge P^{1/2}H^{-1/2}X^{-6\varepsilon}.
\end{equation}
We now write $k^2=de$ and use Lemma \ref{gauss} to obtain
\begin{equation}
S^{\flat}(D)\ll \sum\limits_{e\le (2K)^2/D} \frac{1}{\sqrt{e}}
\sum\limits_{\substack{\chi\ \mbox{\scriptsize\rm mod } e\\ \overline{\chi}^3=\chi_0}}\
\sum\limits_{\substack{k\sim K\\ e|k^2}} |Q(d,k,\chi)|.
\end{equation}
We remove the weight
\[ e\left(-\frac{h^3d_0^3}{pk^2}\right)U(h,d_0,k,p) \]
by partial summation, which leads to an extra factor of order of magnitude $P^{-3/2}\left(1+H^{3}/(PK^2)\right)$. Moreover, we estimate the character sum over $h$ trivially by
\begin{equation} \label{marge}
 \sum\limits_{h\sim H/d_0} \overline{\chi}^3(h) \ll \frac{H}{d_0}.
\end{equation}
Now our task is to prove that
\begin{equation} \label{ta}
P^{-3/2}H\left(1+\frac{H^{3}}{PK^2}\right)\sum\limits_{e\le (2K)^2/D} \frac{1}{\sqrt{e}}
\sum\limits_{\substack{\chi\ \mbox{\scriptsize\rm mod } e\\ \overline{\chi}^3=\chi_0}}\
\sum\limits_{\substack{k\sim K\\ e|k^2}} \frac{1}{d_0}
\left\vert\sum\limits_{p\sim P} \psi_4(p)\chi(p)\left(\frac{k}{p}\right)\right\vert\ll X^{-\varepsilon},
\end{equation}
where
\[ d_0=\left(k^2/e\right)_0 \]
which is the least integer $f$ such that $k^2/e$ is a divisor of $f^3$. \newline

We rewrite the inner double sum in \eqref{ta} in the form
\begin{equation} \label{tas}
\begin{split}
\sum\limits_{\substack{k\sim K\\ e|k^2}} \frac{1}{d_0} \left\vert\sum\limits_{p\sim P} \psi_4(p)\chi(p)\left(\frac{k}{p}\right)\right\vert & =
\sum\limits_{\substack{n\sim K/e'}} \frac{1}{(n^2e^*)_0} \left\vert\sum\limits_{p\sim P} \psi_4(p)\chi(p)\left(\frac{e'}{p}\right)\left(\frac{n}{p}\right)\right\vert,
\end{split}
\end{equation}
where $e^*$ is the square-free kernel of $e$, $e'$ is the least natural number such that
$e|{e'}^2$, and $k=ne'$. We further transform the right-hand side of \eqref{tas} as follows.
\begin{eqnarray} \label{tas1}
& & \sum\limits_{n\sim K/e'} \frac{1}{(n^2e^*)_0} \left\vert\sum\limits_{p\sim P} \psi_4(p)\chi(p)\left(\frac{e'}{p}\right)\left(\frac{n}{p}\right)\right\vert\\
&=& \sum\limits_{g|e^*} \sum\limits_{\substack{n\sim K/e'\\ (n,e^*)=g}}
\frac{1}{(n^2e^*)_0} \left\vert\sum\limits_{p\sim P} \psi_4(p)\chi(p)\left(\frac{e'}{p}\right)\left(\frac{n}{p}\right)\right\vert\nonumber\\
&=& \sum\limits_{g|e^*} \sum\limits_{\substack{m\sim K/(ge')\\ (m,e^*/g)=1}} \frac{1}{(m^2g^3(e^*/g))_0} \left\vert\sum\limits_{p\sim P} \psi_4(p)\chi(p)\left(\frac{ge'}{p}\right)\left(\frac{m}{p}\right)\right\vert\nonumber\\
&=& \frac{1}{e^*} \sum\limits_{g|e^*} \sum\limits_{\substack{m\sim K/(ge')\\ (m,e^*/g)=1}} \frac{1}{(m^2)_0} \left\vert\sum\limits_{p\sim P} \psi_4(p)\chi(p)\left(\frac{ge'}{p}\right)\left(\frac{m}{p}\right)\right\vert\nonumber\\
&\le& \frac{\sqrt{e'}}{e^*\sqrt{K}} \sum\limits_{g|e^*} \sqrt{g} \sum\limits_{m\le 2K/(ge')} \frac{\sqrt{m}}{(m^2)_0} \left\vert\sum\limits_{p\sim P} \psi_4(p)\chi(p)\left(\frac{ge'}{p}\right)\left(\frac{m}{p}\right)\right\vert,\nonumber
\end{eqnarray}
where we have used that $e^*$ is square-free and that the arithmetic function $f(x)=x_0$ is multiplicative.
The inner double sum in the last line of \eqref{tas1} can be re-written in the form
\begin{eqnarray} \label{inner}
& & \sum\limits_{m\le 2K/(ge')} \frac{\sqrt{m}}{(m^2)_0} \left\vert\sum\limits_{p\sim P} \psi_4(p)\chi(p)\left(\frac{ge'}{p}\right)\left(\frac{m}{p}\right)\right\vert\\
&=& \sum\limits_{p\sim P} \psi_4(p)\chi(p)\left(\frac{ge'}{p}\right) \sum\limits_{m\le 2K/(ge')} a_m \frac{\sqrt{m}}{(m^2)_0} \left(\frac{m}{p}\right),\nonumber
\end{eqnarray}
where $a_m$ are suitable complex numbers with $|a_m|=1$. Using the Cauchy-Schwarz inequality and Lemma~\ref{Heathls} due to Heath-Brown,
and taking into account that $K/(ge')\ll P$, we estimate the
right-hand side of \eqref{inner} by
\begin{equation} \label{afterheath}
\ll X^{\varepsilon}P \left(\sum\limits_{q\le 2K/(ge')} \ \sum\limits_{\substack{m_1,m_2\le 2K/(ge')\\
m_1m_2=q^2}} \frac{\sqrt{m_1}}{(m_1^2)_0}\frac{\sqrt{m_2}}{(m_2^2)_0} \right)^{1/2}.
\end{equation}
Using
$$
\frac{\sqrt{m_1}}{(m_1^2)_0}\frac{\sqrt{m_2}}{(m_2^2)_0}\le \frac{q}{(q^4)_0}\ \ \ \mbox{ if }\ \ \ m_1m_2=q^2
$$
and Lemma \ref{f}, \eqref{afterheath} is bounded by
$$
\ll X^{2\varepsilon}P.
$$
Using this and \eqref{inner} it follows that \eqref{tas1} and hence \eqref{tas} is bounded by
$$
X^{3\varepsilon}P\sqrt{\frac{e'}{e^*K}}.
$$
Using this, Lemma \ref{charnumb} and $\sqrt{ee^*}=e'$, we deduce that the left-hand side of \eqref{ta} is dominated by
$$
X^{4\varepsilon}P^{-1/2}HK^{-1/2}\left(1+\frac{H^{3}}{PK^2}\right)\sum\limits_{e\le (2K)^2/D} \frac{1}{\sqrt{e'}}.
$$
Now we again use Lemma \ref{f} to estimate the above by
\begin{equation} \label{alm}
X^{5\varepsilon}P^{-1/2}HK^{1/2}D^{-1/2}\left(1+\frac{H^{3}}{PK^2}\right).
\end{equation}

Remember that we assume that $K\ge P^{1/2}K^{-1/2}X^{-6\varepsilon}$ (see inequality \eqref{K}) and hence have $D=PH^{-1}X^{-12\varepsilon}$ by our definition of $D$ in \eqref{D}. Therefore,
\eqref{alm} and hence $S^{\flat}(D)$ is bounded by
\[ \ll X^{20\varepsilon}\left(P^{-1}K^{1/2}H^{3/2}+P^{-11/4}H^{21/4}\right). \]
Finally, using $H\ll (P/A)^{1+\varepsilon}$ and $K\ll (P/B)^{1+\varepsilon}$, we find that
\[ S^{\flat}(D) \ll X^{-\varepsilon} \]
if $P\ll X^{7/10-\varepsilon}$.  The appearance of this exponent marks the limit of our method. \newline
\newline

For the proof of Theorem~\ref{main} it now suffices to prove the bound $T_2\ll X^{-\varepsilon}$. This is the object of the remainder of this paper. \newline

\section{A general mean value estimate for Dirichlet polynomials}

For the estimation of $T_2$, we shall need zero density estimates for Dirichlet $L$-functions. In section 10, we shall establish such estimates in terms of mean values of Dirichlet polynomials. To bound them, we now prove the following general result.

\begin{theorem} \label{Dirichlet}
Let $M$ and $N$ be natural numbers, $(a_1,...,a_N)$ a vector in $\mathbbm{C}^N$ and
$(b_{m,n})$ an $M\times N$-matrix with complex entries. Set $a_n=0$ if $n>N$. Assume that for any $x\ge 0$ and $\Delta\ge 0$ an estimate of the form
\begin{equation} \label{direst}
\sum\limits_{m=1}^M \left\vert \sum\limits_{x<n\le x+\Delta} b_{m,n}a_n \right\vert^2 \le F\left(\Delta+G\right)\sum\limits_{x<n\le x+\Delta} |a_n|^2 \end{equation}
holds for some fixed $F,G>0$. Then for any $T\ge 1$, $\sigma\in \rear$ and any sets
$\mathcal{S}(m)$ of complex numbers $\rho$ such that $\sigma+1 \geq \beta = \Re \rho \geq
 \sigma$, $\gamma = \Im \rho$, $|\gamma|\le T$ and
\[
 \left| \Im \rho - \Im \rho' \right| \geq 1
\]
for all distinct $\rho$ and $\rho'$ in $\mathcal{S}(m)$, we have
\begin{equation} \label{diresteq}
\sum\limits_{m=1}^M \sum\limits_{\rho\in \mathcal{S}(m)} \left\vert \sum_{n=1}^N b_{m,n}a_nn^{-\rho} \right\vert^2 \ll (\log 2N)F\left(N+GT\right)
\sum\limits_{n=1}^N |a_n|^2n^{-2\sigma},
\end{equation}
where the implied $\ll$-constant is absolute.
\end{theorem}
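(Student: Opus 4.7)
\noindent\emph{Plan.} Write $S_m(s) = \sum_{n \le N} b_{m,n}a_n n^{-s}$. The strategy is first to establish a continuous mean value theorem (MVT) on vertical lines, then discretise to the $\rho_j$'s using Gallagher's lemmas. For any $\alpha \in \rear$ and $U \ge 1$, the MVT asserts
\[
\sum_m \int_{-U}^U |S_m(\alpha + it)|^2\,\dif t \ll F(N + GU)\sum_n |a_n|^2 n^{-2\alpha}.
\]
This follows by applying Gallagher's second lemma (Lemma~\ref{gal2}) to the Dirichlet polynomial with coefficients $b_{m,n}a_n n^{-\alpha}$, which bounds the left side by $U^2$ times $\int_0^\infty |\sum_{y<n\le\tau y} b_{m,n}a_n n^{-\alpha}|^2\,\dif y/y$ with $\tau = e^{1/U}$. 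Abel summation rewrites each short sum through $R_m(y,\Delta) = \sum_{y<n\le y+\Delta}b_{m,n}a_n$, to which the hypothesis \eqref{direst} applies with $\Delta = (\tau-1)y \asymp y/U$, giving $\sum_m|\cdot|^2 \ll F(y/U+G)\sum_{y<n\le\tau y}|a_n|^2 n^{-2\alpha}$. Exchanging sum and integral and bounding $F(n/U + G) \le F(N + GU)$ for $n \le N$ produces the MVT.

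\noindent The second step is the discretisation in $\gamma_j$. By Gallagher's first lemma (Lemma~\ref{gal1}) with $\delta = 1$ applied to $t \mapsto S_m(\alpha + it)$ at the $\ge 1$-spaced points $\gamma_j$,
\[
\sum_j |S_m(\alpha + i\gamma_j)|^2 \ll \int_{|t|\le T+1/2} |S_m(\alpha + it)|^2\,\dif t + \Bigl(\int |S_m|^2\Bigr)^{\!1/2}\Bigl(\int |S_m'|^2\Bigr)^{\!1/2}.
\]
Summing in $m$ (Cauchy-Schwarz on the cross term) and applying the MVT to $S_m$ and to $S_m'$ --- the latter having coefficients $-b_{m,n}a_n\log n$, for which the MVT is re-derived from \eqref{direst} by Abel summation and so carries the additional factor $(\log 2N)^2$ --- yields
\[
\sum_m \sum_j |S_m(\alpha + i\gamma_j)|^2 \ll (\log 2N)\,F(N + GT)\sum_n |a_n|^2 n^{-2\alpha},
\]
the single $\log 2N$ being the geometric mean of the two integrals in the Sobolev cross term.

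\noindent To handle the variation of $\beta_j$ in $[\sigma,\sigma+1]$, the fundamental theorem of calculus combined with Cauchy-Schwarz gives $|S_m(\rho_j)|^2 \le 2|S_m(\sigma + i\gamma_j)|^2 + 2 \int_\sigma^{\sigma+1} |\partial_\alpha S_m(\alpha + i\gamma_j)|^2\,\dif\alpha$ (using $\beta_j - \sigma \le 1$). The first term on the right is handled by the previous step at $\alpha = \sigma$. For the second, the previous step applied to $\partial_\alpha S_m$ (coefficients $-b_{m,n}a_n\log n$) produces an integrand $\ll (\log 2N)\,F(N+GT)\sum(\log n)^2|a_n|^2 n^{-2\alpha}$; integrating over $\alpha \in [\sigma,\sigma+1]$ and invoking the elementary bound $\int_\sigma^{\sigma+1} n^{-2\alpha}\,\dif\alpha \ll n^{-2\sigma}/\log 2n$ absorbs one factor of $\log n$ per coefficient, leaving the desired bound $\ll (\log 2N)\,F(N+GT)\sum |a_n|^2 n^{-2\sigma}$.

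\noindent\emph{Main obstacle.} The principal technical difficulty is the careful accounting of logarithmic factors. Each application of Gallagher's first lemma introduces a Sobolev cross term whose derivative factor carries a $\log n$ weight; the integration over $\alpha$ absorbs only one such factor per coefficient via $\int n^{-2\alpha}\,\dif\alpha \asymp n^{-2\sigma}/\log 2n$. Balancing these two sources of $\log$-factors so that only a single $\log 2N$ remains in the final bound --- rather than the $(\log 2N)^2$ that a naive 2D subharmonic estimate (mean value over disks of radius $\asymp 1/\log 2N$) would yield --- is the delicate point of the argument. A secondary subtlety is that \eqref{direst} is hypothesised only for $(a_n)$, so the mean value bounds for derivative sequences like $(a_n \log n)$ must be rederived from it by Abel summation, with the boundary and integrated-remainder terms kept in check.
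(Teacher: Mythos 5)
Your route is genuinely different from the paper's. To move the evaluation point off the line $\Re s=\sigma$, you apply the fundamental theorem of calculus in the real-part variable $\alpha$, so the variation $\beta_j-\sigma\in[0,1]$ is paid for by $\int_\sigma^{\sigma+1}|\partial_\alpha S_m(\alpha+i\gamma_j)|^2\,\dif\alpha$, whose coefficients carry an extra $\log n$. The paper instead keeps $\Re s=\sigma$ fixed and performs Abel summation in the \emph{truncation length} $u$:
\[
\sum_{n\le N}b_{m,n}a_nn^{-\rho}=a_1b_{m,1}+S(\sigma+i\gamma,m,N)N^{\sigma-\beta}+(\beta-\sigma)\int_2^NS(\sigma+i\gamma,m,u)\,u^{\sigma-\beta-1}\,\dif u,
\]
with $S(s,m,u)=\sum_{2\le n\le u}a_nb_{m,n}n^{-s}$. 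The residual factor there is $u^{\sigma-\beta}\le 1$, not a $\log n$ weight, so only one Gallagher cross term (hence one $\log 2N$) arises, and the Cauchy--Schwarz weight $\dif u/(u\log u)$ it uses costs merely a $\log\log N$.

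Your log-accounting, however, has a genuine gap in the final step and does not yield \eqref{diresteq} as stated. You correctly arrive at the integrand $\ll(\log 2N)\,F(N+GT)\sum_n(\log n)^2|a_n|^2n^{-2\alpha}$, the outer $\log 2N$ already being the Gallagher cross term for $\partial_\alpha S_m$. Integrating over $\alpha\in[\sigma,\sigma+1]$ via $\int_\sigma^{\sigma+1}n^{-2\alpha}\,\dif\alpha\ll n^{-2\sigma}/\log 2n$ absorbs only \emph{one} of the two $\log n$'s; what remains is $(\log 2N)\,F(N+GT)\sum_n(\log n)\,|a_n|^2n^{-2\sigma}\ll(\log 2N)^2F(N+GT)\sum_n|a_n|^2n^{-2\sigma}$, one power of $\log 2N$ more than you claim, and your closing sentence silently drops it. The loss is harmless for the paper's application to $T_2$ in section 11, where every logarithm is swallowed by $D^{\varepsilon}$, but the FTC-in-$\alpha$ device appears to cost an unavoidable extra $\log 2N$ relative to the paper's Abel-in-$u$ trick; to recover the stated power of $\log 2N$ you would need to switch to the latter.
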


\begin{proof} First, we re-write the inner-most sum on the left-hand
 side of \eqref{diresteq} as follows.
\[ \sum_{n=1}^N b_{m,n}a_nn^{-\rho} = a_1 b_{m,1} + \sum_{2 \leq n \leq  N} a_n b_{m,n} n^{-\sigma - i \gamma} n^{\sigma-\beta} = a_1 b_{m,1} + \int_2^N u^{\sigma - \beta} \dif S(\sigma+i \gamma,  m, u), \]
where
\[
 S(s,m,u) = \sum_{2 \leq n \leq u} a_n b_{m,n}n^{-s}.
\]
Applying integration by parts, we get that
\begin{equation} \label{diriest1}
\sum_{n=1}^N b_{m,n}a_nn^{-\rho} = a_1 b_{m,1} + S(\sigma+i\gamma,m,N) N^{\sigma-\beta} + (\beta - \sigma) \int_2^N S(\sigma+i\gamma,m,u) u^{-\beta+\sigma-1} \dif u.
\end{equation}
Now using Cauchy's inequality, we get from \eqref{diriest1} that
\begin{eqnarray*}
\left| \sum_{n=1}^N b_{m,n}a_nn^{-\rho} \right|^2 & \ll & |a_1
b_{m,1}|^2 + | S(\sigma+i \gamma, m, N) |^2 N^{2(\sigma-\beta)} + (\beta -\sigma)^2 \left( \int_2^N (\log u)
					u^{-2\beta+2\sigma-1} \dif
					u\right) \\
 && \hspace*{.5cm} \times \left( \int_2^N \left| S(\sigma+i \gamma, m, u)
								 \right|^2 \frac{\dif u}{u \log u} \right) \\
 & \ll &  |a_1 b_{m,1}|^2 + \left| S(\sigma+i \gamma, m, N)
			    \right|^2 N^{2(\sigma-\beta)} + \int_2^N
 \left| S(\sigma+i \gamma, m, u) \right|^2 \frac{\dif u}{u \log u}.
\end{eqnarray*}
We now note that
\begin{equation} \label{diriest2}
\sum_{\rho \in \mathcal{S}(m)} \left| S(\sigma+i\gamma,
			     m,u)\right|^2 = \sum_{\rho \in
\mathcal{S}(m)} \left| \sum_{2 \leq n \leq u} c_n b_{m,n} n^{-i \gamma} \right|^2,
\end{equation}
with $c_n=a_nn^{-\sigma}$.  Applying Lemma~\ref{gal1}, we get that the
 right-hand side of \eqref{diriest2} is
\begin{equation} \label{diriest3}
\begin{split}
\ll \int_{-T}^T &\left| \sum_{2 \leq n \leq u} c_n b_{m,n} n^{-it}
		     \right|^2 \dif t \\
& + \left( \int_{-T}^T \left| \sum_{2 \leq n \leq u} c_n b_{m,n} n^{-it}
		     \right|^2 \dif t \right)^{1/2} \left( \int_{-T}^T \left|
 \sum_{2 \leq n \leq u} c_n b_{m,n} n^{-it} \log n \right|^2 \dif t \right)^{1/2} ,
\end{split}
\end{equation}
recalling that the $\gamma$'s are well-spaced with spacing 1.  Now using
 Lemma~\ref{gal2}, we get that \eqref{diriest3} is
\begin{equation} \label{diriest4}
\begin{split}
\ll T^2 \int_0^N & \left| \sum_{y<n\leq \tau y} c_n b_{m,n} \right|^2
 \frac{\dif y}{y} + T^2 \left( \int_0^N \left| \sum_{y<n\leq \tau y} c_n
					b_{m,n} \right|^2 \frac{\dif
			 y}{y} \right)^{1/2} \\
 & \times \left( \int_0^N \left| \sum_{y<n\leq \tau y} c_n
					b_{m,n} \log n \right|^2
 \frac{\dif y}{y} \right)^{1/2}.
\end{split}
\end{equation}
where $\tau = \exp (1/T)$.  Now summing over $m$, \eqref{diriest2},
 and \eqref{diriest3} give that
\begin{equation} \label{diriest5}
\begin{split}
\sum_m& \sum_{\rho \in \mathcal{S}(m)} \left| S(\sigma+i\gamma, m,u)
				   \right|^2 \\
&\ll T^2 \int_0^N \sum_m \left| \sum_{y<n\leq \tau y} c_n
 b_{m,n}\right|^2 \frac{\dif y}{y} \\
& \hspace*{.5in} + T^2 \left( \int_0^N \sum_m \left| \sum_{y<n\leq \tau y} c_n
 b_{m,n}\right|^2 \frac{\dif y}{y} \right)^{1/2} \left( \int_0^N \sum_m \left| \sum_{y<n\leq \tau y} c_n
 b_{m,n} \log n \right|^2 \frac{\dif y}{y} \right)^{1/2},
\end{split}
\end{equation}
after applying Cauchy's inequality.  Now applying \eqref{direst} together with partial summation to the  right-hand side of \eqref{diriest5} after partial summation to remove the $n^{\sigma}$ and $n^{\sigma} \log n$, we get that \eqref{diriest5} is
\begin{eqnarray*}
& \ll & T^2 (\log 2N) \int_0^N F((\tau-1)y +G) \sum_{y<n\leq \tau y} |a_n|^2
 n^{-2\sigma} \frac{\dif y}{y} \\
& \ll & T^2 (\log 2N) F \sum_{n=1}^N |a_n|^2 n^{-2\sigma} \int_{n/\tau}^n
 ((\tau-1)y + G) \frac{\dif y}{y} \\
& \ll & (\log 2N) F \sum_{n=1}^N |a_n|^2 n^{-2\sigma}
 (T^2n(\tau-1)(1-1/\tau)+GT) \\
& \ll & (\log 2N) F (N+GT) \sum_{n=1}^N |a_n|^2 n^{-2\sigma}.
\end{eqnarray*}
The last of the above inequalities arrives by the following
 observation.  The elementary inequality
\[
 |e^x-1| \ll |x|, \; \mbox{for} \; |x| \leq 1
\]
implies that
\[
 |\tau -1| \ll |T|^{-1} \; \mbox{and} \; |1/\tau -1 | \ll |T|^{-1}, \;
 \mbox{for} \; |T| \geq 1.
\]
Now combining all estimates, we get the desired result.
\end{proof}

\section{A general zero density estimate}
In the sequel, let
\begin{equation}
{\rm d}\mu(v)=e^{-|v|}{\rm d}v+\delta(v),
\end{equation}
where ${\rm d}v$ is the Lebesgue measure on $\rear$, and $\delta(v)$ is the point measure at $v=0$. \newline

In this section, we establish the following general density estimate for zeros of Dirichlet $L$-functions in terms of mean values of Dirichlet polynomials twisted with characters.

\begin{theorem} \label{zerodens}
Let $T\ge 1$, $1/2+\varepsilon\le \sigma \le 1$ and $\mathcal{X}$ be a set of Dirichlet characters with maximum modulus $U$, and $c_\chi>0$ for $\chi\in \mathcal{X}$. Set $D:=UT$. By $\mathcal{R}$ denote a set of pairs $(\chi,\rho)$ such that $\chi\in \mathcal{X}$, $\Re \rho\ge \sigma$, $|\Im \rho|\le T$, and
the imaginary parts of the $\rho$'s belonging to a fixed character $\chi\in \mathcal{X}$ are well-spaced with spacing $\ge 1$. By $\mathcal{B}$ denote a sequence $(b_n^*(v))_{n\in\mathbbm{N}}$ of Lebesgue-integrable functions with domain $\rear$ and range $\mathbbm{C}$ which are bounded by
$|b_n^*(v)|\le 1$ for all $n\in \mathbbm{N}$, $v\in \rear$. Then
\begin{equation*}
\begin{split}
\sum\limits_{\chi\in \mathcal{X}} c_{\chi}N(\chi,T,\sigma)
\ll D^{\varepsilon}\ \sup\limits_{\mathcal{R }}\ \sup\limits_{\mathcal{B}}\ & \inf\limits_{ D^{1/2+\varepsilon}\le Z}\
\sup\limits_{Z\le P\le D^{1+\varepsilon}+Z^{3/2}} \ \sup\limits_{2\le \alpha \le C} \\
& \int\limits_{-\infty}^{\infty} \sum\limits_{(\chi,\rho)\in \mathcal{R}} c_{\chi}
\left\vert \sum\limits_{P<n\le \alpha P} b_n^*(v)\chi(n)n^{-\rho} \right\vert^2 {\rm d}\mu(v),
\end{split}
\end{equation*}
where $C\ge 2$ is a constant only depending on $\varepsilon$, and the implied $\ll$-constant depends only on $\varepsilon$ as well.
\end{theorem}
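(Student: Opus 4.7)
The plan is to follow the classical mollifier + contour shift approach used to reduce zero density estimates to mean values of Dirichlet polynomials, but carried out so that the length $P$ of the polynomial, the weight $b_n^*(v)$, and the measure $d\mu(v)$ arise naturally from the construction. For each $\chi\in\mathcal{X}$ I would introduce a Möbius mollifier $M(s,\chi)=\sum_{m\le Y}\mu(m)\chi(m)m^{-s}$ with $Y$ a parameter to be optimized in terms of $Z$. Then $L(s,\chi)M(s,\chi)^2=M(s,\chi)+R(s,\chi)$, where $R$ is a Dirichlet series whose coefficients are supported on $n>Y$ and are bounded by $\ll d_3(n)$. Since $L(\rho,\chi)=0$ at a zero, this becomes, after Mellin inversion against a smooth cutoff at scale $Z$, an identity of the form
\[
M(\rho,\chi)\,=\,\sum_{n>Y}r(n,\chi)\chi(n)n^{-\rho}\varphi(n/Z)\,+\,\frac{1}{2\pi i}\int_{(-\delta)}\bigl(LM^2\bigr)(\rho+w,\chi)\,\Phi(w)Z^{w}\,dw,
\]
for suitable smooth $\varphi$ and its Mellin transform $\Phi$. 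The residue picked up at $w=0$ when shifting the contour from $\Re w=c>0$ to $\Re w=-\delta$ vanishes by $L(\rho,\chi)=0$, which is the reason that one obtains an identity rather than a trivial equality.

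The next step is to lower-bound $|M(\rho,\chi)|$ on average, and to decompose both sides dyadically. By Gallagher-type arguments (essentially Lemma \ref{gal1} and Lemma \ref{gal2}), one can reduce $|M(\rho,\chi)|\gg 1$ (which must hold generically, otherwise the zero is "detected" already by the mollifier being small) to the existence of a dyadic range $(P,\alpha P]$ with $Z\le P\le D^{1+\varepsilon}+Z^{3/2}$ on which a smoothed partial Dirichlet polynomial attached to either the left- or right-hand side has size $\gg(\log D)^{-1}$. The upper bound $D^{1+\varepsilon}$ on $P$ reflects the effective length of $LM^2$ at height $T$ for moduli up to $U$, while the term $Z^{3/2}$ comes from the tail of the contour integral: after Cauchy--Schwarz one is left with a Dirichlet polynomial whose coefficients are bounded but of length bounded by the support of the convolution $\mu\ast\mu\ast \Lambda$ truncated at $Y^2\sim Z^{3/2}$ (with the correct choice $Y\asymp Z^{3/4}$). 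Pigeon-holing in $P$ and $\alpha\in[2,C]$ costs only $D^\varepsilon$.

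Finally, I would square the resulting pointwise bound $1\ll(\log D)^{2}|\sum_{P<n\le\alpha P}b_{n}^{*}(v)\chi(n)n^{-\rho}|^{2}$, sum against $c_\chi$ over $(\chi,\rho)\in\mathcal{R}$, and integrate in $v$. The discrete part $\delta(v)$ of $d\mu(v)$ comes from the residue/main-term contribution to the detection identity (the unshifted Dirichlet polynomial, $v=0$), and the continuous part $e^{-|v|}\,dv$ comes from parameterizing the shifted contour $\Re w=-\delta$ by $w=-\delta+iv$ and noting that the weight $\Phi(w)$ (a Gamma factor in $w$) decays exponentially in $|v|$. Taking the supremum over admissible $\mathcal{R}$ and over $\mathcal{B}=(b_n^*(v))$, and the infimum over the free parameter $Z\ge D^{1/2+\varepsilon}$, yields the stated bound with the factor $D^\varepsilon$ absorbing all the logarithmic and pigeonhole losses.

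The main obstacle I expect is the quantitative bookkeeping: choosing $Y$ as a function of $Z$ so that the tail of $LM^2$ at $\Re s=\sigma$ genuinely has effective length $\ll D^{1+\varepsilon}+Z^{3/2}$, and verifying that the weight functions $b_n^*(v)$ actually satisfy $|b_n^*(v)|\le 1$ uniformly after the contour shift and the dyadic smoothing. A secondary, but more technical, point is ensuring that the short dilation $\alpha\in[2,C]$ suffices, which requires a careful smooth dyadic partition of unity and a uniform-in-$\alpha$ passage from smooth to sharp cutoffs via Perron's formula.
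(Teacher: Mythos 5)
Your overall strategy is the right one and is close in spirit to what the paper does: the paper's proof is essentially a direct appeal to the mollifier/zero-detection method of Iwaniec--Kowalski, Section~10.4, with specific choices $\mathcal{L}=2\log D$, $X=D^{1/2}\mathcal{L}$, $Y=D^{1/2}\mathcal{L}^2$, $M=D^{\varepsilon/3}$, a dyadic decomposition of the detecting polynomial into $O(\log D)$ pieces $R_l(\chi)$ each of length $(P,2^kP]$ with $Z\le P\le (MY)^2+Z^{3/2}$, and the bound $|b_n(v)|\le\tau_{4k}(n)\ll n^{\varepsilon}$ to reduce to $|b_n^*(v)|\le 1$. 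You correctly identify the provenance of $d\mu(v)$ (the $\delta$-mass from the unshifted Dirichlet polynomial, the $e^{-|v|}\,dv$ from the shifted contour with a Gamma-factor), the role of the free parameter $Z$, and the need for a dyadic pigeonhole in $P$ and $\alpha$.

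There is, however, a concrete gap in your specific construction. You build the detection from the identity $L(s,\chi)M(s,\chi)^2=M(s,\chi)+R(s,\chi)$ and then need the dichotomy: either $|M(\rho,\chi)|\gg 1$ (in which case the contour argument transfers largeness to a piece of $R$), or $|M(\rho,\chi)|$ is small (in which case you propose that ``the mollifier being small'' is itself the detection). The second branch is not compatible with the shape of the theorem as stated. Indeed, $M(\rho,\chi)$ small forces $\bigl|\sum_{2\le m\le Y}\mu(m)\chi(m)m^{-\rho}\bigr|\gg 1$, a Dirichlet polynomial of length at most $Y$. But with your choice $Y\asymp Z^{3/4}$ we have $Y<Z\le D^{1/2+\varepsilon}$, so no dyadic block $(P,\alpha P]$ with $P\ge Z$ can be extracted from this polynomial; the detection condition then does not fit into the required range $Z\le P$. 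The IwKo/Bombieri identity $L(s,\chi)M(s,\chi)=1+R(s,\chi)$, which the paper uses, sidesteps this entirely: the ``main term'' is the constant $1$ rather than the variable quantity $M(\rho,\chi)$, so at a zero one obtains directly $1\ll|R|$ with $R$ supported on $n>X\asymp D^{1/2}$, and no dichotomy is needed. A secondary (fixable) vagueness is your explanation of the $Z^{3/2}$ boundary: the convolution appearing is not $\mu*\mu*\Lambda$, and the length $D^{1+\varepsilon}+Z^{3/2}$ in the supremum over $P$ arises from IwKo's explicit parameter bookkeeping $(MY)^2+Z^{3/2}$ rather than from an optimization $Y\asymp Z^{3/4}$. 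I would also note that the Gallagher lemmas (Lemmas~\ref{gal1}, \ref{gal2}) are deployed in this paper in the proof of the companion mean-value estimate, Theorem~\ref{Dirichlet}, not in the zero-detection step; invoking them here conflates the two stages.
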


\begin{proof} We follow the method in section 10.4. in \cite{IwKo}. As in (10.74) in \cite{IwKo}, we choose
\begin{equation}
\mathcal{L}:=2\log D,\ \ \  \ X:=D^{1/2}\mathcal{L},\ \ \ \  Y:=D^{1/2}{\mathcal{L}}^2.
\end{equation}
We further choose
\begin{equation} \label{Mchoice}
M:=D^{\varepsilon/3}
\end{equation}
which consists with the conditions $1\le M\le D^{1/2}$, (10.80) and $M\ge D^{\varepsilon/4}$ on pages 261 and 262 in \cite{IwKo}. To see that our choice of $M$ consists with (10.80) in \cite{IwKo}, we note that our $\sigma$ was denoted by $\alpha$ in section 10.4. in \cite{IwKo}, and that in Theorem \ref{zerodens}  we assume $\sigma\ge 1/2+\varepsilon$.  \newline

Now, following the method on pages 260-262 in \cite{IwKo}, we find that
\[ N(\chi,T,\sigma)=\sum\limits_{l=1}^{L} R_l(\chi), \]
where $L=[\log Y/\log 2]$, and $R_l(\chi)$ is the cardinality of a certain subset $\mathcal{S}_l(\chi)$ of the zeros $\rho$ of $L(s,\chi)$ with $\Re \rho\ge \sigma$, $|\Im \rho|\le T$ which is bounded by
\begin{equation} \label{Rl}
R_l(\chi)\le \mathcal{L}^{6k} \int\limits_{-\infty}^{\infty} \sum\limits_{\rho\in \mathcal{S}_l(\chi)}
\left| \sum\limits_{P<n\le 2^{k}P} b_{n}(v)\chi(n)n^{-\rho}\right|^2 {\rm d}\mu(v)
\end{equation}
for all $l$.
In \eqref{Rl}, $k$ is a natural number that depends on $l$ but not on $\chi$ and is bounded from above by a constant which depends only on $\varepsilon$,
$P$ depends on $l$ but not on $\chi$ and falls into the segment
\[ Z\le P\le (MY)^2+Z^{3/2}, \]
where $Z$ is a fixed natural number that satisfies the inequality $MY\le Z$, and
$(b_n(v))_{n\in\mathbbm{N}}$ is a sequence, depending on $l$ but not on $\chi$, of certain Lebesgue-integrable functions with domain $\rear$ and range $\mathbbm{C}$ which are bounded by
$|b_n(v)|\le \tau_{4k}(n)$ (the divisor function of order $4k$) for all $n\in \mathbbm{N}$, $v\in \rear$.
Adding up \eqref{Rl} over $\chi$, and taking the above conditions to $k$, $P$ and
$b_n(v)$ into consideration, we deduce that for some constant $C\ge 2$ only depending on
$\varepsilon$, we have
\begin{equation} \label{lest}
\begin{split}
\sum\limits_{\chi\in \mathcal{X}} c_{\chi}R_l(\chi)
\ll D^{\varepsilon/2}\ \sup\limits_{\mathcal{R }}\ \sup\limits_{\mathcal{B}} \ &\inf\limits_{ D^{1/2+\varepsilon}\le Z }\
\sup\limits_{Z\le P\le D^{1+\varepsilon}+Z^{3/2}}\ \sup\limits_{2\le \alpha \le C}\\
& \int\limits_{-\infty}^{\infty}
\sum\limits_{(\chi,\rho)\in \mathcal{R}} c_{\chi}
\left\vert \sum\limits_{P<n\le \alpha P} b_n^*(v)\chi(n)n^{-\rho} \right\vert^2 {\rm d}\mu(v),
\end{split}
\end{equation}
where we have used that $\tau_{4k}(n)\ll n^{\varepsilon}$ and that the set $\mathcal{S}_l(\chi)$ can be divided into $O(\log D)$ subsets such that the imaginary parts of the elements of any of these subsets are well-spaced with spacing $\ge 1$.
Adding up \eqref{lest} over $l$, we obtain the desired result.
\end{proof}

Using Theorem \ref{zerodens}, we now derive the following more explicit zero density estimate.

\begin{theorem} \label{zerodensexp}
Let $T\ge 1$, $1/2+\varepsilon \le \sigma \le 1$, $\mathcal{X}$ be a set of Dirichlet characters with maximum modulus $U$, and $c_\chi>0$ for $\chi\in \mathcal{X}$. Set $D:=UT$.
Assume that for any $x\ge 0$, $\Delta\ge 0$ and any sequence $(a_n)_{n\in \mathbbm{N}}$ of complex numbers
an estimate of the form
\begin{equation}
\sum\limits_{\chi\in \mathcal{X}} c_{\chi}\left\vert \sum\limits_{x<n\le x+\Delta} a_n\chi(n) \right\vert^2 \le F\left(\Delta+G\right)\sum\limits_{x<n\le x+\Delta} |a_n|^2
\end{equation}
with
\begin{equation} \label{condi}
U^{3/4+2\varepsilon} \le G
\end{equation}
holds. Then we have
\begin{equation}
\sum\limits_{\chi\in \mathcal{X}} c_{\chi} N(\chi,T,\sigma) \ll D^{3\varepsilon}
F\left((GT)^{\frac{3(1-\sigma)}{2-\sigma}}+
D^{2(1-\sigma)}\right),\nonumber
\end{equation}
where the implied $\ll$-constant depends only on $\varepsilon$.
\end{theorem}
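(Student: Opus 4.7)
The plan is to combine Theorem~\ref{zerodens}, which reduces a weighted sum of zero counts to a mean value of character-twisted Dirichlet polynomials, with Theorem~\ref{Dirichlet}, which converts a large-sieve-type hypothesis on character sums (the one posited here) into a bound for precisely such a mean value. The final step will be an elementary one-variable optimization in the parameter $Z$ appearing in the conclusion of Theorem~\ref{zerodens}.

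First I would apply Theorem~\ref{zerodens}. Fix a set $\mathcal{R}$ and a sequence $\mathcal{B}=(b_n^*(v))$ realizing the outer suprema, and fix any $Z\ge D^{1/2+\varepsilon}$, $P\in[Z,D^{1+\varepsilon}+Z^{3/2}]$, $\alpha\in[2,C]$. The resulting inner object is
\[
 I(v):=\sum_{(\chi,\rho)\in\mathcal{R}} c_\chi\Bigl|\sum_{P<n\le\alpha P} b_n^*(v)\chi(n)n^{-\rho}\Bigr|^2.
\]
Since the imaginary parts of the $\rho$'s attached to a fixed $\chi$ are $1$-spaced by assumption, this fits the format of Theorem~\ref{Dirichlet} with $M=|\mathcal{X}|$, index $m$ corresponding to $\chi\in\mathcal{X}$, $b_{m,n}=b_n^*(v)\chi(n)$, $a_n=\mathbbm{1}_{(P,\alpha P]}(n)$, and $N\asymp P$. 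The hypothesis of Theorem~\ref{zerodensexp} gives exactly estimate \eqref{direst} for these $b_{m,n}$ (with the weights $c_\chi$ absorbed into the $m$-sum). Hence
\[
 I(v)\ll (\log 2P)\,F\,(P+GT)\sum_{P<n\le\alpha P}|b_n^*(v)|^2 n^{-2\sigma}\ll F(P+GT)P^{1-2\sigma}D^{\varepsilon},
\]
using $|b_n^*(v)|\le 1$ and $\sigma>1/2$. Because this bound is independent of $v$ and the total mass of $\mathrm{d}\mu$ is $O(1)$, integrating against $\mathrm{d}\mu(v)$ costs nothing. Consequently, inserting this into the conclusion of Theorem~\ref{zerodens} reduces everything to
\[
 \sum_{\chi\in\mathcal{X}}c_\chi N(\chi,T,\sigma)\ll D^{2\varepsilon}\,F\,\inf_{Z\ge D^{1/2+\varepsilon}}\sup_{Z\le P\le D^{1+\varepsilon}+Z^{3/2}}(P+GT)P^{1-2\sigma}.
\]

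Next I would perform the optimization. Write $(P+GT)P^{1-2\sigma}\le P^{2-2\sigma}+GT\cdot P^{1-2\sigma}$. Since $2-2\sigma\ge 0$, the first summand is largest at the upper endpoint and is bounded by $(D^{1+\varepsilon}+Z^{3/2})^{2-2\sigma}\ll D^{2(1-\sigma)+O(\varepsilon)}+Z^{3(1-\sigma)}$. Since $1-2\sigma<0$, the second summand is largest at $P=Z$ and equals $GT\cdot Z^{1-2\sigma}$. Balancing $Z^{3(1-\sigma)}=GT\cdot Z^{1-2\sigma}$ yields $Z_0:=(GT)^{1/(2-\sigma)}$, with common value $(GT)^{3(1-\sigma)/(2-\sigma)}$, producing the desired right-hand side
\[
 F\bigl((GT)^{3(1-\sigma)/(2-\sigma)}+D^{2(1-\sigma)}\bigr)D^{3\varepsilon}.
\]

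The one delicate step, and the main obstacle I foresee, is verifying that this optimizer $Z_0=(GT)^{1/(2-\sigma)}$ is actually admissible, i.e.\ satisfies $Z_0\ge D^{1/2+\varepsilon}$. This is exactly where the hypothesis \eqref{condi} enters. Using $G\ge U^{3/4+2\varepsilon}$ and $D=UT$, one needs $(U^{3/4+2\varepsilon}T)^{1/(2-\sigma)}\ge (UT)^{1/2+\varepsilon}$, which, since $\sigma\in[1/2+\varepsilon,1]$ gives $(1/2+\varepsilon)(2-\sigma)\le 3/4+3\varepsilon/2$, is a straightforward comparison of exponents on $U$ and $T$ separately. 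In the degenerate case $Z_0<D^{1/2+\varepsilon}$ one simply sets $Z=D^{1/2+\varepsilon}$; then $GT\le D^{(1/2+\varepsilon)(2-\sigma)}$, and both $Z^{3(1-\sigma)}$ and $GT\cdot Z^{1-2\sigma}$ are $\ll D^{3(1-\sigma)/2+O(\varepsilon)}\ll D^{2(1-\sigma)+O(\varepsilon)}$, so the stated bound still holds. Collecting the $D^\varepsilon$-factors (from Theorem~\ref{zerodens}, from $\log 2P$, and from the slack in the upper endpoint) into the single exponent $D^{3\varepsilon}$ completes the argument.
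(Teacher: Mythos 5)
Your proposal is correct and follows essentially the same route as the paper: apply Theorem~\ref{zerodens}, feed the hypothesis into Theorem~\ref{Dirichlet} to bound the resulting mean value by $F(P+GT)P^{1-2\sigma}$, then optimize in $Z$ via $Z=(GT)^{1/(2-\sigma)}$, with \eqref{condi} guaranteeing admissibility. The only minor discrepancy is bookkeeping in the application of Theorem~\ref{Dirichlet} — the paper sets $b_{m,n}=\sqrt{c_\chi}\chi(n)$ and puts $b_n^*(v)\mathbbm{1}_{(P,\alpha P]}(n)$ in the $a_n$ slot (so the $c_\chi$ are carried cleanly and the hypothesis matches \eqref{direst} verbatim), whereas you placed $b_n^*(v)$ inside $b_{m,n}$ and waved at absorbing $c_\chi$; the argument still works but the paper's assignment is the tidier one.
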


\begin{proof} Using Theorem \ref{zerodens} and Theorem \ref{Dirichlet} with $b_{m,n}=\sqrt{c_{\chi}}\chi(n)$, we obtain
\begin{equation} \label{zero}
\begin{split}
 \sum\limits_{\chi\in \mathcal{X}} c_{\chi}N(\chi,T,\sigma) & \ll D^{2\varepsilon}
F\inf\limits_{D^{1/2+\varepsilon}\le Z}
\sup\limits_{Z\le P\le D^{1+\varepsilon}+Z^{3/2}} \left( P+GT\right)
P^{1-2\sigma} \\
&\ll D^{3\varepsilon} F\left(D^{2(1-\sigma)}+\inf\limits_{D^{1/2+\varepsilon} \le Z} \left(Z^{3(1-\sigma)}+ GTZ^{1-2\sigma}\right)\right).
\end{split}
\end{equation}
We choose
\begin{equation}
Z:=\left(GT\right)^{1/(2-\sigma)}.
\end{equation}
By \eqref{condi}, this choice doesn't contradict our requirement that $D^{1/2+\varepsilon}\le Z$. Now the desired result follows from \eqref{zero}.
\end{proof}

\section{Estimation of $T_2$}

We rewrite the inner triple sum in \eqref{TT2} in the form
\begin{equation} \label{rew}
\begin{split}
\sum\limits_{k\sim K} \sum\limits_{d|k^2} \frac{d^{1/3}}{d_0^{2/3}} \sum\limits_{\chi \ \mbox{\rm \scriptsize mod } k^2/d} N(\chi\psi_4(k/\cdot),T,\sigma) &= \sum\limits_{d\le (2K)^2} \frac{d^{1/3}}{d_0^{2/3}}
\sum\limits_{k_1\sim K/d'} \sum\limits_{\chi \ \mbox{\rm \scriptsize mod } k_1^2d^*}  N(\chi\psi_4(k_1d'/\cdot),T,\sigma) \\
&= \sum\limits_{d\le (2K)^2} \frac{d^{1/3}}{d_0^{2/3}} \sum\limits_{k_1\sim K/d'} \sum\limits_{\chi \ \mbox{\rm \scriptsize mod } k_1^2d^*}  N(\psi_4(d'/\cdot)\chi,T,\sigma),
\end{split}
\end{equation}
where we have used that $\chi(k_1/\cdot)$ runs over all characters modulo $k_1^2d^*$ as $\chi$ runs over all characters modulo $k_1^2d^*$.
For the estimation of the right-hand side of \eqref{rew} we shall use Theorem \ref{zerodensexp}. To this end, we need to bound the character sum
\begin{equation} \label{post1}
 \sum\limits_{d\le (2K)^2} \frac{d^{1/3}}{d_0^{2/3}}
\sum\limits_{k_1\sim K/d'}\ \sum\limits_{\chi \ \mbox{\rm \scriptsize mod } k_1^2d^*} \left\vert \sum\limits_{x<n\le x+\Delta} a_n\psi_4(n)\left(\frac{d'}{n}\right)\chi(n)\right\vert^2.
\end{equation}
Using Lemma \ref{ls}, \eqref{post1} is
\begin{equation} \label{t2est}
\ll X^{\varepsilon}
\left(K\Delta\sum\limits_{d\le (2K)^2} \frac{d^{1/3}}{d_0^{2/3}d'}+K^3\sum\limits_{d\le (2K)^2} \frac{d^{1/3}{d^*}}{d_0^{2/3}{d'}^3}\right)\sum\limits_{x<n\le x+\Delta} |a_n|^2.
\end{equation}
By the virtue of Lemma \ref{f}, we deduce that \eqref{t2est} is bounded by
\begin{equation} \label{t2esti}
\ll X^{\varepsilon}K\left(\Delta+K^{2}\right)\sum\limits_{x<n\le x+\Delta} |a_n|^2.
\end{equation}

Let
\[ M:=P^{-3/2}H^{1/2}\left(1+\frac{H^{3/2}}{P^{1/2}K}\right). \]
Now using \eqref{TT2}, \eqref{t2esti},
Theorem \ref{zerodensexp} and the inequality
\[ 2(1-\sigma)\le \frac{3(1-\sigma)}{2-\sigma}, \]
valid for $1/2\le \sigma\le 1$,
we obtain
\begin{equation} \label{densapply}
T_2\ll MX^{\varepsilon} K^{1/2}\int\limits_{1/2}^{1}
P^{\sigma}K^{a(\sigma)}{\rm d}\sigma,
\end{equation}
where
\[ a(\sigma)=\frac{9(1-\sigma)}{2(2-\sigma)}. \]
We split the right-hand side of \eqref{densapply} into
\[ MX^{\varepsilon}K^{1/2}
\left(\int\limits_{1/2}^{7/8}+\int\limits_{7/8}^1\right)P^{\sigma}
K^{a(\sigma)}{\rm d}\sigma=:T_2^{\sharp}+T_2^{\flat}, \ \ \ \mbox{ say.} \]

First we deal with the term $T_2^{\sharp}$. We note that the integral $\int_{1/2}^{7/8}$ can be replaced by a supremum $\sup_{1/2\le \sigma\le 7/8}$. We desire to have the bound
\begin{equation} \label{des}
T_2^{\sharp}\ll X^{-\varepsilon}.
\end{equation}
Observing that $1/2+a(\sigma)-1\ge 0$ if $1/2\le \sigma\le 7/8$, and using that $H\ll (P/A)^{1+\varepsilon}$ and $K\ll (P/B)^{1+\varepsilon}$, we obtain after a short calculation that $\eqref{des}$ holds if
\[ P\ll X^{F(\sigma)-\varepsilon} \]
for all $1/2\le \sigma\le 7/8$, where
\begin{equation} \label{req}
F(\sigma):=\frac{37-32\sigma}{6(7-4\sigma-2\sigma^2)}.
\end{equation}
The minimum of $F(\sigma)$ in the range $1/2\le \sigma\le 7/8$ is attained at
\[ \sigma_1=\frac{37-\sqrt{153}}{32}=0.7697... \]
and is $F(\sigma_1)=0.7534...$. Hence, our requirement for \eqref{des} to hold is
\begin{equation} \label{finalcond}
P\ll X^{F(\sigma_1)-\varepsilon}=X^{0.7534...}.
\end{equation}

To bound $T_2^{\flat}$, we observe that
\begin{equation} \label{bart}
T_2^{\flat} \ll MX^{\varepsilon}K^{1/2}\left( P^{7/8}K^{9(1-7/8)/2} + P \right).
\end{equation}
By a short calculation using \eqref{small}, $H\ll (P/A)^{1+\varepsilon}$ and $K\ll (P/B)^{1+\varepsilon}$,
we see that the right-hand side of \eqref{bart} is $\ll X^{-\varepsilon}$ if
$P\ll X^{19/27-\varepsilon}$, as desired. This completes the proof of Theorem \ref{main}.\\

\section{Notes}
An improvement of our exponent $7/10$ is conceivable if we use a large sieve inequality for sextic characters, recently established in \cite{BaYo}, to estimate a part of the term $S_2+S_3$ with $d$'s in a certain range. We may be able to improve further if we use an alternative method for bounding the part of $S_1$ with large $d$ and then large sieve with square moduli, developed both jointly and independently by the authors in \cites{SBLZ3, BaZh, SB1, Zha}, for bounding the part of $S_1$ with small $d$. However, this will become quite complicated. \newline

Moreover, it would be highly interesting to have a completely unconditional majorant for the average rank of all elliptic curves, i.e. an unconditional version of Corollary~\ref{cor1}.  Recall that Corollary~\ref{cor1} uses GRH for Hasse-Weil $L$-functions to discard all zeros that are not central by positivity of $\phi$ on the real line.  Therefore, fore-going the said GRH, it should be possible to use zero-density estimates to handle the zeros far off the critical line unconditionally.  This has been done by E. Kowalski, P. Michel and J. VanderKam in \cites{KM1, KM2, KMVdK} for $L$-functions associated with weight 2 level $q$ new forms, {\it \'a la} A. Selberg \cite{sel} whose method uses the approximate functional equations of the relevant $L$-functions.  Additional knowledge of the often problematic root numbers appearing in these approximate functional equations enabled the authors of \cites{KM1, KM2, KMVdK} to obtain their result.  Unfortunately, having such a zero-density result seems to be out of reach for the family of $L$-functions under consideration in this paper.  The methods in \cites{KM1, KM2, KMVdK} would not carry over for the family of all elliptic curve $L$-functions as the root numbers in this family are less well-known and present a much greater challenge.  Writing out this root number explicitly, we obtain an expression that contains the term
\[ \mu(4a^3+27b^2), \]
$\mu(n)$ being the M\"obius $\mu$ function which is usually extremely difficult to handle. \newline

\noindent{\bf Acknowledgments.} This work was completed when S. B. was visiting the Division of Mathematical Sciences of Nanyang Technological University(NTU) in Singapore.  He would like to thank NTU for its generous financial support and warm hospitality. S. B. further wishes to thank Jacobs University Bremen for providing excellent working conditions. L. Z. was supported by a postdoctoral fellowship at the {\it Institutionen f\"ur Matematik} of {\it Kungliga Tekniska H\"ogskolan}(KTH) in Stockholm and by a grant from the G\"oran Gustafsson Foundation during this work.  He wishes to thank these sources.   More in particular, for the help and advise given to him during his stay at KTH, L. Z. owes a debt of gratitude to Prof. P\"ar M. Kurlberg. Furthermore, both authors are grateful to Matthew Young for his helpful discussions with the authors on this problem. Finally, both authors wish to thank the referee for his/her valuable comments and pointing out an error in an earlier version of the manuscript.

\bibliography{biblio}
\bibliographystyle{amsxport}

\vspace*{.5cm}

\noindent School of Engineering \& Science, Jacobs Univ. Bremen \newline
P. O. Box 750561, Bremen 28725 Germany \newline
Email: {\tt s.baier@jacobs-university.de} \newline

\noindent Division of Math. Sci., School of Phys. \& Math. Sci., \newline
Nanyang Technological Univ., 637371 Singapore \newline
Email: {\tt lzhao@pmail.ntu.edu.sg}

\end{document}